\documentclass[a4paper,12pt]{scrartcl}

\usepackage[utf8]{inputenc}

\usepackage{amsthm,amsmath,amsfonts,amssymb}
\numberwithin{equation}{section} 

\usepackage{mathrsfs}

\usepackage[final]{showlabels} 

\showlabels{cite} \showlabels{bibitem} \showlabels{ref} \showlabels{eqref}

\usepackage{mathtools}	
\mathtoolsset{showonlyrefs=true} 
\usepackage{bm}	
\usepackage{bbm}	
\usepackage{bbold}

\usepackage{tensor}	

\usepackage[colorlinks=true]{hyperref}	
\usepackage[all]{hypcap}      

\newcommand{\m}[1]{\mathcal{#1}}
\newcommand{\bb}[1]{\mathbb{#1}}

\newcommand{\mrm}[1]{\mathrm{#1}}
\newcommand{\mscr}[1]{\mathscr{#1}}
\newcommand{\f}[1]{\mathfrak{#1}}


\newcommand{\diff}{\partial}
\newcommand{\bdiff}{\bar{\partial}} 

\newcommand{\I}{\mathrm{i}\mkern1mu}	
\newcommand{\transpose}{\intercal}		

\DeclarePairedDelimiter\abs{\lvert}{\rvert}	
\DeclarePairedDelimiter\norm{\lVert}{\rVert}	

\DeclarePairedDelimiter{\set}{\{}{\}}	
\newcommand{\tc}{\mathrel{}\mathclose{}\middle|\mathopen{}\mathrel{}}	


\newcommand{\cotJ}{T^*\!\m{J}}	


\newtheorem{thm}{Theorem}[section]

\newtheorem{prop}[thm]{Proposition}
\newtheorem{lemma}[thm]{Lemma}
\newtheorem{cor}[thm]{Corollary}
\newtheorem{conj}[thm]{Conjecture}

\theoremstyle{definition}
\newtheorem{definition}[thm]{Definition}

\theoremstyle{remark}

\newtheorem{rmk}[thm]{Remark}

\title{Scalar curvature and deformations of complex structures}
\author{Carlo Scarpa}
\date{}

\begin{document}

\maketitle

\begin{abstract}
\noindent\textsc{Abstract.} We study a system of equations on a compact complex manifold, that couples the scalar curvature of a K\"ahler metric with a spectral function of a first-order deformation of the complex structure. The system comes from an infinite-dimensional K\"ahler reduction, which is a hyperk\"ahler reduction for a particular choice of the spectral function. The system can be formally complexified using a flat connection on the space of first-order deformations that are compatible with a K\"ahler metric. We describe a variational characterization of the equations, a Futaki invariant for the system, and a generalization of K-stability that is conjectured to characterize the existence of solutions to the system. We verify a particular case of this conjecture in the context of toric manifolds.
\end{abstract}


\section{Introduction}

Consider a compact symplectic manifold~$(M,\omega)$ of dimension~$2n$, and let~$\m{J}$ be the set of all (integrable) complex structures that are compatible with~$\omega$. It is the space of sections of a bundle on~$M$, with fibres isomorphic to the Hermitian symmetric space~$\mrm{Sp}(2n)/U(n)$, so it is an infinite-dimensional locally symmetric K\"ahler manifold. Our interest in this space of complex structures comes from the fact, first noted by Fujiki~\cite{Fujiki_moduli} and Donaldson~\cite{Donaldson_scalar}, that the group~$\m{G}$ of exact symplectomorphisms of~$(M,\omega)$ acts by pullbacks on~$\m{J}$, and this action is Hamiltonian. A moment map is given by assigning to each complex structure~$J$ the scalar curvature of the K\"ahler metric~$g_J$ defined by~$\omega$ and~$J$.

In this work, we extend this picture by considering the total space of the cotangent bundle~$\cotJ$. A typical element of~$\cotJ$ is given by a pair of a complex structure~$J\in\m{J}$ and a first-order deformation~$\alpha$ of~$J$, that is a form~$\alpha\in\m{A}^{0,1}_J(T^{1,0}_JM)$ satisfying the integrability condition~$\bdiff_J\alpha=0$. Of course,~$\alpha$ must also satisfy a compatibility condition with the symplectic form: the contraction~$g_J(\alpha\cdot,\cdot)\in\Gamma(T^{0,1}_JM\otimes T^{0,1}_JM)$ must be a symmetric tensor.

Since~$\m{J}$ is a K\"ahler manifold,~$\cotJ$ has a canonical complex structure.  We can consider Calabi ansatz metrics on~$\cotJ$
\begin{equation}\label{eq:metrica_TJ}
\bm{\Omega}=\pi^*\Omega+\mrm{d}\mrm{d}^cF
\end{equation}
where~$\pi^*\Omega$ is the pullback to~$\cotJ$ of the Fujiki-Donaldson K\"ahler form on~$\m{J}$, and~$F$ is a function of the shape
\begin{equation}\label{eq:metrica_TJ_f}
F(J,\alpha)=\int_Mf(\bar{\alpha}\alpha)\frac{\omega^n}{n!}
\end{equation}
for some convex spectral function~$f$ of the endomorphism~$\bar{\alpha}\alpha\in\mrm{End}({T^{1,0}_J}^*M)$, which is self-adjoint for the Hermitian product defined by~$g_J$. If~$f$ is convex the~$(1,1)$-form in~\eqref{eq:metrica_TJ} will be positive, and it coincides with the Donaldson-Fujiki form on the zero-section of~$\cotJ$, see Appendix~\ref{sec:geometry_TJ}.

The pullback action of~$\m{G}$ on~$\m{J}$ lifts to a pullback action on~$\cotJ$, which preserves the K\"ahler form~$\bm{\Omega}$. As the metric~\eqref{eq:metrica_TJ} differs from the Donaldson-Fujiki metric by an exact~$2$-form, the action of~$\m{G}$ is Hamiltonian (see~\cite[Lemma~$3.5$]{ScarpaStoppa_hyperk_reduction}), and if we identify the Lie algebra of~$\m{G}$ with~$\m{C}^\infty(M)$ a moment map is given by
\begin{equation}\label{eq:mm_real_definition}
\left\langle\f{m}_{\bm{\Omega}}(J,\alpha),\varphi\right\rangle=\int_M2\left(s(\omega,J)-\hat{s}\right)\varphi\frac{\omega^n}{n!}+\int_M\mrm{d}^cf_{J,\alpha}\left(\m{L}_{X^\omega_\varphi}J,\m{L}_{X^\omega_\varphi}\alpha\right)\frac{\omega^n}{n!}
\end{equation}
where we are considering~$f$ as a function on~$T^*\left(\mrm{Sp}(2n)/U(n)\right)$ and~$\mrm{d}^c$ is the twisted differential of this space. We refer to Appendix~\ref{sec:geometry_TJ} for more details on the interplay between~$\cotJ$ and~$T^*\left(\mrm{Sp}(2n)/U(n)\right)$. We can integrate by parts the second term on the right-hand side of~\eqref{eq:mm_real_definition} to express it as~$L^2$-pairing of a function with~$\varphi$. The moment map is then identified with the function
\begin{equation}\label{eq:mm_real_explicit}
\f{m}_{\bm{\Omega}}(J,\alpha)=s(\omega,J)-\hat{s}+\mrm{div}\,\Re\left(\I Jg_J(\nabla^\sharp\bar{\alpha},\alpha f'(\bar{\alpha}\alpha))+2\nabla^*(\bar{\alpha}\alpha f'(\bar{\alpha}\alpha))\right).
\end{equation}
Here~$f'(\bar{\alpha}\alpha)$ is shorthand for the gradient of the spectral function~$f$: in a system of normal K\"ahler coordinates for~$g_J$ around a point of~$M$, one can write~$\bar{\alpha}\alpha=U\Lambda U^*$ for some unitary matrix~$U$ and a diagonal, positive matrix~$\Lambda=(\lambda_1,\dots,\lambda_n)$. Then, one has
\begin{equation}
f'(\bar{\alpha}\alpha)=U\mrm{diag}\left(\nabla f(\lambda_1,\dots,\lambda_n)\right)U^*.
\end{equation}

Notice that~$\cotJ$ also carries a canonical holomorphic-symplectic form~$\bm{\Theta}$, given by the differential of the tautological~$1$-form on the cotangent bundle. It is invariant under the~$\m{G}$-action on~$\cotJ$, and the action is also Hamiltonian with respect to~$\bm{\Theta}$. The moment map is given by
\begin{equation}\label{eq:complex_mm}
\f{m}_{\bm{\Theta}}(J,\alpha)=\m{D}^*\alpha
\end{equation}
where~$\m{D}^*$ is the formal adjoint of the Lichnerowicz operator defined by the K\"ahler metric~$g_J$. For a K\"ahler metric~$\omega$, the space of solutions~$\alpha$ of~\eqref{eq:complex_mm} belonging to a fixed class in~$H^1(TM)$ is finite-dimensional, see Lemma~\ref{lem:dim_complex_mm_sols}.

In this paper, we study the system of moment map equations
\begin{equation}\label{eq:system_mm_implicit}
\begin{cases}
\f{m}_{\bm{\Theta}}(J,\alpha)=0\\
\f{m}_{\bm{\Omega}}(J,\alpha)=0
\end{cases}
\end{equation}
for any choice of K\"ahler form~$\bm{\Omega}$ coming from the ansatz~\eqref{eq:metrica_TJ}. Both equations come from an infinite-dimensional K\"ahler reduction. We expect that the space of solutions of~\eqref{eq:system_mm_implicit} can be used to study the (hypothetical) moduli space of polarized manifolds together with a class of first-order deformations of the complex structure. The first equation in~\eqref{eq:system_mm_implicit} is of independent interest: the space of integrable first-order deformations of a complex structure that are compatible with a symplectic form and solve the equation~$\m{D}^*\alpha=0$ is a key ingredient in the deformation theory of constant scalar curvature K\"ahler (cscK) manifolds and the construction of their moduli spaces, see~\cite{FujikiSchumacher_moduli},~\cite{Szekelyhidi_moduli}, and~\cite{DervanNaumann_moduli}.

A particular case of the system~\eqref{eq:system_mm_implicit} is the \emph{Hitchin-cscK system}, introduced in~\cite{ScarpaStoppa_hyperk_reduction}, which is our main motivation for studying~\eqref{eq:system_mm_implicit}. For a particular choice of~$f$ in~\eqref{eq:metrica_TJ_f}, the three symplectic forms~$\left(\bm{\Omega},\Re\bm{\Theta},\Im\bm{\Theta}\right)$ define a hyperk\"ahler structure on~$\cotJ$. The unique function with this property was found in~\cite{Biquard_Gauduchon} in the context of defining hyperk\"ahler structures on the cotangent bundle of Hermitian symmetric spaces. Denoting by~$\lambda_1,\dots,\lambda_n$ the eigenvalues of~$\bar{\alpha}\alpha$, the potential that defines a hyperk\"ahler metric is
\begin{equation}\label{eq:hyperk_potential}
f(\bar{\alpha}\alpha)=\sum_{a=1}^n1-\sqrt{1-\lambda_a}+\log\frac{1+\sqrt{1-\lambda_a}}{2}.
\end{equation}
Notice however that~\eqref{eq:hyperk_potential} is smooth only in the locus~$\m{U}\subseteq\cotJ$ where the eigenvalues of~$\bar{\alpha}\alpha$ are smaller than~$1$: indeed, the hyperk\"ahler metric defined by this potential is well-defined only on~$\m{U}$, and is incomplete. In the present paper, we will only focus on the case when~$\bm{\Omega}$ is defined on the whole cotangent space, but most of the results still hold in the more general case where we consider just an open subset of~$\cotJ$.

We give an overview of the main results of each section.

\paragraph*{The complexified system.} The variables in~\eqref{eq:system_mm_implicit} are a complex structure~$J$ and a first-order deformation~$\alpha$ of~$J$. From the point of view of complex geometry, however, it is more natural to consider the complex structure of~$M$ to be fixed, and to let~$\omega$ and~$\alpha$ vary in the K\"ahler class~$[\omega]\in H^{1,1}(M)$ and the deformation class~$[\alpha]\in H^{1}(TM)$ instead. In Section~\ref{sec:complexification_eqs} we interpret this change of viewpoint as a \emph{formal complexification} of the action~$\m{G}\curvearrowright\cotJ$. This is well understood for the cscK equation, but the presence of the first-order deformation~$\alpha$ in our system of equations introduces some complications. The main issue is that if~$\alpha$ is compatible with a K\"ahler form, it is in general not compatible with other forms in the same K\"ahler class.

We study in Section~\ref{sec:bundle_class} the space~$\m{E}$ of compatible pairs~$(\alpha,\omega)$, which is a vector bundle on the K\"ahler class. The crucial observation is that we can define a connection on~$\m{E}$ by
\begin{equation}\label{eq:connection_bundle}
\left(D_\varphi\alpha\right)(\omega)=\diff_{t=0}\alpha(\omega+\I\diff\bdiff t\varphi)-\bdiff\left(\alpha(\diff\varphi)^{\sharp_{g_J}}\right)
\end{equation}
and this connection is flat, see Proposition~\ref{prop:flat}. Hence, any first-order deformation~$\alpha$ compatible with a K\"ahler form~$\omega_0$ can be uniquely extended to a horizontal section of~$\m{E}$, which to each K\"ahler form~$\omega\in[\omega_0]$ assigns a compatible first order deformation~$\alpha(\omega)$. These horizontal sections of~$\m{E}$ play a fundamental role in the formal complexification of the action~$\m{G}\curvearrowright\cotJ$.
\begin{thm}\label{thm:intro_complexification}
Fix~$(J,\alpha)$ compatible with~$\omega_0$, and extend~$\alpha$ to a~$D$-horizontal section of~$\m{E}$. The system of equations~\eqref{eq:system_mm_implicit} along the complexified orbit of~$(J,\alpha)$ is equivalent to the following system of equations for a K\"ahler metric~$\omega\in[\omega_0]$
\begin{equation}\label{eq:complessificate_intro}
\begin{dcases}
\f{m}_{\bm{\Theta}}(\omega,J,\alpha(\omega))=0\\
\f{m}_{\bm{\Omega}}(\omega,J,\alpha(\omega))=0.
\end{dcases}
\end{equation}
\end{thm}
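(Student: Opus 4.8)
The plan is to use the general formalism of formal complexification of a Hamiltonian action, adapted to~$\cotJ$ and to the extra variable~$\alpha$. The model is the classical Donaldson-Fujiki correspondence for~$\m{G}\curvearrowright\m{J}$: moving~$J$ along the imaginary directions of the (formally) complexified orbit, while keeping the background symplectic form~$\omega_0$ fixed, is the same as keeping~$J$ fixed and varying the K\"ahler form inside its class. Concretely, to a K\"ahler potential~$\varphi$ one associates by Moser's trick a symplectomorphism~$F_\varphi$ with~$F_\varphi^*\omega_\varphi=\omega_0$, where~$\omega_\varphi=\omega_0+\I\diff\bdiff\varphi$, and the orbit point~$F_\varphi^*J$ at~$\omega_0$ represents the K\"ahler metric~$(\omega_\varphi,J)$. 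Since the real directions of the complexified action are the genuine~$\m{G}$-action, under which both moment maps are equivariant by construction, the whole content of the statement lies in the imaginary directions.

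First I would lift this correspondence to~$\cotJ$. A point on the complexified orbit of~$(J,\alpha)$ projects to a point of the complexified orbit of~$J$, hence is of the form~$(F_\varphi^*J,\widetilde\alpha)$ for a deformation~$\widetilde\alpha$ compatible with~$\omega_0$, and the theorem amounts to the identification~$\widetilde\alpha=F_\varphi^*\alpha(\omega_\varphi)$ with~$\alpha(\cdot)$ the~$D$-horizontal extension. To prove this I would differentiate along the orbit: the imaginary part of the infinitesimal complexified action generated by~$\varphi$ moves~$\alpha$ by the sum of (i)~the variation of the compatible deformation as~$\omega$ changes within~$[\omega_0]$ and (ii)~a Lie-derivative term coming from the Moser vector field. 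These are exactly the two terms in the defining formula~\eqref{eq:connection_bundle}: the piece~$\diff_{t=0}\alpha(\omega+\I\diff\bdiff t\varphi)$ records~(i), while the correction~$\bdiff(\alpha(\diff\varphi)^{\sharp_{g_J}})$ records~(ii). Thus the horizontality condition~$D_\varphi\alpha=0$ is precisely the assertion that the transport of~$\alpha$ along the complexified orbit coincides with~$D$-horizontal transport, so that the~$D$-horizontal section furnishes the correct lift~$\alpha(\omega_\varphi)$.

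With this dictionary in hand, the equivalence of the two systems is a matter of~$\m{G}$-equivariance of the moment maps. Pulling back by the symplectomorphism~$F_\varphi$ gives~$\f{m}_{\bm{\Omega}}(F_\varphi^*J,F_\varphi^*\alpha(\omega_\varphi))=F_\varphi^*\,\f{m}_{\bm{\Omega}}(\omega_\varphi,J,\alpha(\omega_\varphi))$, and likewise for~$\f{m}_{\bm{\Theta}}$, where on the right the Lichnerowicz operator is that of the metric defined by~$\omega_\varphi$ and~$J$. As~$F_\varphi$ is a diffeomorphism, either side vanishes exactly when the other does, so the system~\eqref{eq:system_mm_implicit} evaluated along the complexified orbit at the fixed form~$\omega_0$ is equivalent to the system~\eqref{eq:complessificate_intro} for the metric~$\omega_\varphi\in[\omega_0]$.

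The main obstacle is the second step, namely verifying that the transport of~$\alpha$ dictated by the complexified action really is~$D$-horizontal rather than some other lift. The subtlety, which is absent in the pure cscK case, is that~$\alpha$ is a section of the bundle~$\m{E}$ whose fibre genuinely depends on~$\omega$, so there is no a priori way to compare deformations at different points of~$[\omega_0]$ without a connection. The flatness of~$D$ from Proposition~\ref{prop:flat} is what makes the horizontal extension well-defined and independent of the chosen path in~$[\omega_0]$, matching the fact that the orbit point depends only on~$\varphi$ and not on how one flows to it; and the explicit matching of the imaginary infinitesimal action with~\eqref{eq:connection_bundle} is the computational heart of the argument.
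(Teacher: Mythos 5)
Your proposal is correct and follows essentially the same route as the paper: your key step (that transport of~$\alpha$ along the imaginary orbit directions coincides with~$D$-horizontal transport, the correction~$\bdiff\left(\alpha(\diff\varphi)^{\sharp}\right)$ being exactly what reconciles~$\m{L}_{\frac{1}{2}JX_h}\alpha$ with the algebraic imaginary infinitesimal action) is precisely the content of Proposition~\ref{prop:complessificazione_cotJ}, and your final pullback argument is the computation of Section~\ref{sec:complexification_eqs}. The only caveat is terminological: your~$F_\varphi$ does not preserve~$\omega_0$, hence is not an element of~$\m{G}$, so the last step rests on naturality of the moment-map formulas under arbitrary diffeomorphisms rather than on~$\m{G}$-equivariance proper --- which is exactly how the paper phrases it.
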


\paragraph*{Features of the complexified equations.} Together with the moment map interpretation of the equations, the formal complexification of the action and the system of equations described in Section~\ref{sec:complexification} has many interesting consequences. In Section~\ref{sec:energy} we will show that equation~\eqref{eq:mm_real_explicit} (more precisely, its complexified version) is the Euler-Lagrange equation for a functional on the K\"ahler class. As for the first equation in~\eqref{eq:complessificate_intro}, we show in Section~\ref{sec:complex_eq}  
\begin{prop}\label{prop:intro_sol_complex_mm}
If~$(J,\alpha)\in\cotJ$ is a solution of the complex moment map equation~$\m{D}^*\alpha=0$, the complexified orbit of~$(J,\alpha)$ lies in the zero-locus of~$\m{D}^*$.
\end{prop}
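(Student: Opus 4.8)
The plan is to deduce the statement from the general principle that the zero locus of a holomorphic moment map is preserved by the complexified action, and then to match this principle with the concrete description of the complexified orbit provided by Theorem~\ref{thm:intro_complexification}. Recall that $\f{m}_{\bm{\Theta}}$ is the moment map for the holomorphic, $\bm{\Theta}$-preserving action of $\m{G}$ on $\cotJ$, so it satisfies the moment-map identity $\iota_{X_\varphi}\bm{\Theta}=\diff\langle\f{m}_{\bm{\Theta}},\varphi\rangle$ for every $\varphi\in\mrm{Lie}(\m{G})\cong\m{C}^\infty(M)$, together with $\mrm{ad}^*$-equivariance. I would first record two consequences of this setup: (i) since $\bm{\Theta}$ is of type $(2,0)$ and $X_\varphi$ is a real holomorphic vector field on $\cotJ$, the contraction $\iota_{X_\varphi}\bm{\Theta}$ is of type $(1,0)$, which forces $\bdiff\langle\f{m}_{\bm{\Theta}},\varphi\rangle=0$; hence each component $\langle\f{m}_{\bm{\Theta}},\psi\rangle$ is a holomorphic function on $\cotJ$; (ii) equivariance gives $X_\varphi\langle\f{m}_{\bm{\Theta}},\psi\rangle=\langle\f{m}_{\bm{\Theta}},\{\varphi,\psi\}\rangle$, up to the sign convention for the bracket.

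Using (i), the derivative along the imaginary direction $JX_\varphi$ that generates the complexified orbit reduces to a derivative along $X_\varphi$: for a holomorphic function $h$ one has $(JX_\varphi)h=\I\,X_\varphi h$, because $X_\varphi^{0,1}h=0$. Combining this with (ii), along the flow $\phi_t$ of $JX_\varphi$ one obtains the linear evolution equation
\begin{equation}
\frac{\mrm{d}}{\mrm{d}t}\langle\f{m}_{\bm{\Theta}}\circ\phi_t,\psi\rangle=\I\,\langle\f{m}_{\bm{\Theta}}\circ\phi_t,\{\varphi,\psi\}\rangle,
\end{equation}
valid for every test direction $\psi$. This is a homogeneous linear ODE for $t\mapsto\f{m}_{\bm{\Theta}}\circ\phi_t$, so if $\f{m}_{\bm{\Theta}}$ vanishes at the initial point $(J,\alpha)$ it vanishes along the whole imaginary flow; the real directions $X_\varphi$ preserve the zero locus directly by equivariance. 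Since the complexified orbit is swept out by these flows, $\f{m}_{\bm{\Theta}}$ vanishes identically on it.

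To make this concrete I would invoke Theorem~\ref{thm:intro_complexification} to identify the complexified orbit of $(J,\alpha)$ with the family $(\omega,J,\alpha(\omega))$, where $\omega$ ranges over $[\omega_0]$ with $J$ fixed and $\alpha(\omega)$ is the $D$-horizontal extension of $\alpha$. Differentiating $\m{D}^*\alpha(\omega)$ along a path $\omega_t=\omega_0+\I\diff\bdiff(t\varphi)$, the variation of $\alpha$ is prescribed by horizontality, $\diff_{t=0}\alpha(\omega_t)=\bdiff\!\left(\alpha(\diff\varphi)^{\sharp_{g_J}}\right)$ from~\eqref{eq:connection_bundle}, while the variation of $\m{D}^*$ is the standard linearization of the Lichnerowicz operator under a change of Kähler metric in a fixed class. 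The content of the computation is that these two contributions combine into the coadjoint term $\I\,\mrm{ad}^*_\varphi\f{m}_{\bm{\Theta}}$, reproducing the abstract ODE; the flatness and the specific form of the connection $D$ from Proposition~\ref{prop:flat} are precisely what guarantee this cancellation.

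The main obstacle is this last identification: verifying by direct computation that $\diff_{t=0}\!\left(\m{D}^*\alpha(\omega_t)\right)$ is a linear expression in $\m{D}^*\alpha$ itself, so that the evolution is genuinely homogeneous. Abstractly this is forced by the moment-map formalism, but carrying it out requires combining the linearization of $\m{D}^*$ with the horizontal-transport formula for $\alpha$ and checking that the terms not proportional to $\m{D}^*\alpha$ cancel — a calculation that relies essentially on the construction of $D$. Once this is in place, the homogeneous linear ODE forces $\m{D}^*\alpha(\omega)\equiv0$ along the complexified orbit, which is the assertion.
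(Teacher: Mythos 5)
Your core argument is, in substance, the paper's own proof. The paper also derives a linear evolution equation for $\f{m}_{\bm{\Theta}}$ along the imaginary flows and concludes by uniqueness with zero initial data: where you phrase step (i) as holomorphicity of the components $\langle\f{m}_{\bm{\Theta}},\psi\rangle$, the paper computes directly, using the moment map identity and then the type-$(2,0)$ property in the form $\bm{\Theta}\bigl(\hat{\varphi},\bm{I}\hat{h}\bigr)=\I\,\bm{\Theta}\bigl(\hat{\varphi},\hat{h}\bigr)$; where you write the coadjoint term $\I\,\langle\f{m}_{\bm{\Theta}},\{\varphi,\psi\}\rangle$, the paper integrates by parts (using that $X^{\omega_0}_h$ is divergence-free) to obtain the transport equation $\diff_t\,\f{m}_{\bm{\Theta}}(\omega_0,J_t,\alpha_t)=\I\,X^{\omega_0}_h\bigl(\f{m}_{\bm{\Theta}}(\omega_0,J_t,\alpha_t)\bigr)$. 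These are the same computation in different notation.

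The substantive correction concerns your last two paragraphs: the ``main obstacle'' you describe is not there, and your proof is already complete at the end of your second paragraph. The evolution equation should be derived, as the paper does, for the curve in $\cotJ$ itself, $(J_t,\alpha_t)=\psi_t^*\bigl(J,\alpha(\omega_t)\bigr)$, with the background form $\omega_0$ held fixed. The only geometric input beyond the moment-map formalism is Proposition~\ref{prop:complessificazione_cotJ}, which says that $\diff_t(J_t,\alpha_t)=\bm{I}_{J_t,\alpha_t}\hat{h}_{J_t,\alpha_t}$; this is where the connection~\eqref{eq:connection_bundle} and Proposition~\ref{prop:flat} enter, and it is already established. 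No linearization of the Lichnerowicz operator, and no verification that $\diff_t\bigl(\m{D}^*_{\omega_t}\alpha(\omega_t)\bigr)$ is proportional to $\m{D}^*\alpha$, is required: the proposition asserts exactly that $\f{m}_{\bm{\Theta}}(\omega_0,\cdot,\cdot)$ vanishes on the complexified orbit in $\cotJ$, and the reformulation with $J$ fixed and $\omega$ varying follows afterwards by naturality, $\f{m}_{\bm{\Theta}}\bigl(\omega_0,\psi^*J,\psi^*\alpha(\omega)\bigr)=\psi^*\bigl(\f{m}_{\bm{\Theta}}(\omega,J,\alpha(\omega))\bigr)$, exactly as in Section~\ref{sec:complexification_eqs} --- not by a new PDE computation. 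One caveat you share with the paper: calling the evolution equation a ``homogeneous linear ODE'' is loose, since the right-hand side involves the unbounded operator $u\mapsto\I X^{\omega_0}_h(u)$, so uniqueness is not formal; it follows, for instance, by observing that on each flow cylinder of $X^{\omega_0}_h$ the equation is a Cauchy--Riemann equation in $t+\I s$, and a holomorphic function vanishing on the line $\{t=0\}$ vanishes identically. The paper asserts uniqueness at the same level of detail, so this is not a gap relative to the paper, but it is the one analytic point genuinely worth spelling out.
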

In Lemma~\ref{lem:dim_complex_mm_sols} we study the existence of solutions of~$\m{D}^*\alpha=0$. The result is particularly interesting in the polarized case, when we consider an ample line bundle~$L\to M$ and K\"ahler metrics in~$\mrm{c}_1(L)$. For each K\"ahler form~$\omega\in\mrm{c}_1(L)$, the equation~$\m{D}^*\alpha=0$ gives a way to choose a canonical representative of any first order deformation of the pair~$(M,L)$ in the sense of~\cite[\S$3.3.3$]{Sernesi_deformations}, see Remark~\ref{rmk:deformation_polarized}. Hence, solutions of the system~\eqref{eq:complessificate_intro} are canonical representatives of the K\"ahler class~$\mrm{c}_1(L)$ and a first order deformation class in~$H^1(M,\m{E}_L)$, where~$\m{E}_L$ is the Atiyah extension of~$L$, the element associated to~$\mrm{c}_1(L)$ under the identification~$H^1(M,T^*{}^{1,0}M)\cong\mrm{Ext}^1_{\m{O}_M}(T^{1,0}M,\m{O}_M)$.

From Section~\ref{sec:linearization} onwards, we focus on spectral functions~$f(\bar{\alpha}\alpha)$ of the shape
\begin{equation}\label{eq:spectral_k}
f(\bar{\alpha}\alpha)=\sum_ak(\lambda_a)
\end{equation}
for a convex, non-decreasing function~$k$, where~$\lambda_1,\dots,\lambda_n$ are the eigenvalues of~$\bar{\alpha}\alpha$.
\begin{thm}\label{thm:intro_linearizzazione}
For a Calabi ansatz metric~$\bm{\Omega}$ defined by the spectral function~\eqref{eq:spectral_k}, the linearization of~$\f{m}_{\bm{\Omega}}(\omega,\alpha(\omega))$ is elliptic. The linearization around a solution is self-adjoint, and the kernel is in~$1$-to-$1$ correspondence with the functions~$\varphi\in\m{C}^\infty(M,\bb{R})$ such that~$\m{L}_{X^\omega_\varphi}J=0$ and~$\m{L}_{X^\omega_\varphi}\alpha=0$.
\end{thm}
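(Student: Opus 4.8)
The plan is to read the linearization off the general moment-map formalism, so that ellipticity, self-adjointness and the kernel all reduce to properties of the infinitesimal $\m{G}$-action. Write $A$ for that action at $(J,\alpha)$,
\[
A\varphi=\bigl(\m{L}_{X^\omega_\varphi}J,\ \m{L}_{X^\omega_\varphi}\alpha\bigr),
\]
a second-order operator from $\m{C}^\infty(M,\bb{R})$ to $T_{(J,\alpha)}\cotJ$, and let $\bm{I}$, $\bm{g}=\bm{\Omega}(\cdot,\bm{I}\cdot)$ be the complex structure and (by convexity of $k$ in \eqref{eq:spectral_k}, positive) Riemannian metric on $\cotJ$. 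By Theorem~\ref{thm:intro_complexification} the variation $\omega\mapsto\omega+\I\diff\bdiff t\varphi$ with $\alpha$ carried along as a $D$-horizontal section corresponds, through the diffeomorphisms integrating the complexified flow, to moving $(J,\alpha)$ along the complexified orbit, i.e.\ in the direction $\bm{I}A\varphi$. Equivariance of $\f{m}_{\bm{\Omega}}$ then yields
\[
\mathsf{L}\varphi=\m{P}\varphi+\m{L}_{X^\omega_\varphi}\f{m}_{\bm{\Omega}},
\qquad
\m{P}\varphi:=\delta_{\bm{I}A\varphi}\,\f{m}_{\bm{\Omega}},
\]
where $\mathsf{L}$ is the linearization in the theorem and the transport term $\m{L}_{X^\omega_\varphi}\f{m}_{\bm{\Omega}}$ is first order in $\varphi$ and vanishes wherever $\f{m}_{\bm{\Omega}}=0$. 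Hence $\mathsf{L}$ and $\m{P}$ share a principal symbol, and at a solution they coincide.

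I would then identify $\m{P}=A^*A$. The defining property $\mrm{d}\langle\f{m}_{\bm{\Omega}},\eta\rangle=\iota_{A\eta}\bm{\Omega}$, evaluated on $\bm{I}A\varphi$, gives
\[
\langle\m{P}\varphi,\eta\rangle_{L^2}=\bm{\Omega}\bigl(A\eta,\bm{I}A\varphi\bigr)=\bm{g}(A\eta,A\varphi),
\]
which is symmetric in $\varphi,\eta$; writing $A^*$ for the adjoint of $A$ between the $L^2$-pairing on $\m{C}^\infty(M,\bb{R})$ and $\bm{g}$ on $T\cotJ$, this reads $\m{P}=A^*A$. Thus $\m{P}$ is formally self-adjoint and non-negative, proving self-adjointness of $\mathsf{L}$ at a solution. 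Moreover $A^*A\varphi=0$ forces $\lVert A\varphi\rVert_{\bm{g}}^2=0$, so $\ker\mathsf{L}=\ker A=\{\varphi\mid\m{L}_{X^\omega_\varphi}J=0,\ \m{L}_{X^\omega_\varphi}\alpha=0\}$, which is the asserted correspondence.

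For ellipticity I would pass to symbols: $\sigma(\m{P})=\sigma(A)^{*}\sigma(A)$, so $\m{P}$ is elliptic exactly when $\sigma_\xi(A)$ is injective for $\xi\neq0$. The first component $\varphi\mapsto\m{L}_{X^\omega_\varphi}J$ has the same principal symbol as the Lichnerowicz operator $\m{D}$, which is already injective off the zero covector — this is precisely the ellipticity of $\m{D}^*\m{D}$ underlying the cscK case — so $\sigma_\xi(A)$ is injective and $\sigma(\m{P})$ is positive definite of order four; the second component $\varphi\mapsto\m{L}_{X^\omega_\varphi}\alpha$ only adds a non-negative term. Since $\mathsf{L}$ has the same principal symbol as $\m{P}$, it is elliptic at every metric in the class, not just at solutions.

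The main obstacle is the equivariance identity of the first paragraph in the presence of the flat connection $D$ of \eqref{eq:connection_bundle}: one must verify that the $D$-horizontal variation of $\alpha$ along $\omega+\I\diff\bdiff t\varphi$ really matches the $\m{G}$-complexified-orbit direction under the integrating diffeomorphisms — this is where Proposition~\ref{prop:flat} and Theorem~\ref{thm:intro_complexification} are essential — and in particular that differentiating $\alpha(\omega)$ produces nothing beyond the first-order transport term. This is a genuine point rather than a formality, since a direct expansion of the divergence in \eqref{eq:mm_real_explicit} shows that the $\alpha$-dependent contribution to $\mathsf{L}$ is itself fourth order in $\varphi$ and so cannot be dismissed as lower order; it is exactly the moment-map identity $\m{P}=A^*A$ that organizes these fourth-order terms into a non-negative symbol. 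Once that identity is in place, the rest is symbol bookkeeping.
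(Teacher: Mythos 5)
Your treatment of self-adjointness and of the kernel coincides with the paper's own proof: the paper also decomposes the linearization as the orbit-direction derivative $\delta_{\bm{I}\hat{\varphi}}\f{m}_{\bm{\Omega}}$ plus a transport term that is first order in~$\varphi$ and vanishes identically at a solution (equation~\eqref{eq:linearizz_implicit}), and then feeds the orbit-direction term into the moment-map identity from the proof of Proposition~\ref{prop:sigma_exact}, exactly as you do. Two small slips there, neither of which affects the argument: the transport vector field is $\tfrac12 JX^\omega_\varphi=-\tfrac12\nabla\varphi$, not $X^\omega_\varphi$; and with the paper's sign conventions the operator at a solution is $-A^*A$ rather than $+A^*A$ (consistent with the negative-definite symbol the paper computes), a pure matter of convention.

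Where you genuinely diverge is ellipticity, and this is the bulk of the paper's proof. The paper argues by explicit computation: it rewrites $\bar{\alpha}\alpha$ in terms of a Hermitian matrix, differentiates the spectral function using~\cite{derivatives_eigenvalues}, writes out the full fourth-order principal symbol, and verifies its sign by hand from convexity and monotonicity of~$k$ — which is also why the paper restricts to spectral functions of the form~\eqref{eq:spectral_k} from that section onwards. Your abstract route via $\m{P}=A^*A$ is legitimate and does not use the special form~\eqref{eq:spectral_k} at all, but it rests on two facts you leave implicit. First, $A^*$ is a differential operator and the symbol calculus $\sigma(A^*A)=\sigma(A)^*\sigma(A)$ applies only because $\bm{g}$ is the integral over~$M$ of a \emph{fibrewise} metric on the bundle with fibre $T\m{AC}^+$; this is the Koiso-type structure recorded in Appendix~\ref{sec:geometry_TJ} and must be invoked, since for a metric involving derivatives of the tangent vectors the conclusion would fail. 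Second, your closing clause that the component $\varphi\mapsto\m{L}_{X^\omega_\varphi}\alpha$ "only adds a non-negative term" is not right as stated: the Calabi ansatz metric is not block diagonal in the splitting $(\dot{J},\dot{\alpha})$, because $\mrm{d}\mrm{d}^cF$ has cross terms ($f$ depends on $J$ through $g_J$). The correct decomposition is by pieces of the \emph{metric}, not components of~$A$: one has $\bm{g}\geq\pi^*\Omega(\cdot,\bm{I}\cdot)$ since $\mrm{d}\mrm{d}^cf(\cdot,\bm{I}\cdot)\geq 0$ by convexity (Proposition~\ref{prop:convex_potential}), whence $\abs{\sigma_\xi(A)}^2_{\bm{g}}\geq\abs{\sigma_\xi(\m{D})}^2>0$ for $\xi\neq 0$. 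This repaired inequality is also what saves the argument where $\bm{g}$ is only semi-definite — e.g.\ along $\alpha=0$ when $k'(0)=0$, where the fibre directions of the ansatz degenerate but the paper's explicit symbol (and your lower bound) is still nonzero. With those two points made explicit, your proof is complete; it buys generality (any convex spectral $f$, with the conceptual explanation of why the fourth-order $\alpha$-terms organize positively), while the paper's computation buys an explicit symbol formula and independence from any positivity assumption on the ambient metric.
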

As a consequence, if~$(\omega,\alpha)$ is a solution of the (complexified) system~\eqref{eq:complessificate_intro} and~$M$ has discrete reduced automorphism group, then we can slightly deform~$\alpha$ in any direction and still obtain solutions of~\eqref{eq:complessificate_intro}.

\paragraph*{The Futaki Invariant and K-stability.} In Section~\ref{sec:Futaki} we study an obstruction to the existence of solutions of the real moment map equation. The Futaki invariant for the moment map equation~$\f{m}_{\bm{\Omega}}=0$ actually coincides with the classical one, except for the fact that it is defined on a sub-algebra~$\f{h}_\alpha$ of the Lie algebra~$\f{h}_0$ of the holomorphic vector fields admitting a holomorphy potential (see Section~\ref{sec:Futaki} for the definition of~$\f{h}_\alpha$). In general,~$\f{h}_\alpha$ will be properly contained in~$\f{h}_0$.
\begin{prop}\label{prop:Futaki}
If there exists a solution of~\eqref{eq:mm_real_definition} in the K\"ahler class~$[\omega_0]$ then the Futaki invariant of~$[\omega_0]$ vanishes on~$\f{h}_\alpha$.
\end{prop}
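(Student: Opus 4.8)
The plan is to read the result off the moment-map formula~\eqref{eq:mm_real_definition} directly, exploiting that the defining condition of~$\f{h}_\alpha$ is engineered precisely to annihilate the ``anomalous'' second term there. Once that term is gone, the pairing of~$\f{m}_{\bm{\Omega}}$ with a holomorphy potential collapses to the classical Futaki integrand, and the obstruction then follows from the usual moment-map argument that a zero of the moment map kills every such pairing.

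Concretely, I would first fix~$\xi\in\f{h}_\alpha\subseteq\f{h}_0$ and let~$\varphi_\xi$ be its holomorphy potential. Since~$\xi\in\f{h}_0$, the Hamiltonian field~$X^\omega_{\varphi_\xi}$ is the real (or imaginary) part of a holomorphic vector field, so its flow preserves~$J$, i.e.~$\m{L}_{X^\omega_{\varphi_\xi}}J=0$; the extra requirement defining~$\f{h}_\alpha$ is that the same flow also preserve the deformation, so that~$\m{L}_{X^\omega_{\varphi_\xi}}\alpha=0$ (this is exactly the condition appearing in the kernel description of Theorem~\ref{thm:intro_linearizzazione}). Substituting~$\varphi=\varphi_\xi$ into~\eqref{eq:mm_real_definition} and noting that~$\mrm{d}^cf_{J,\alpha}$ is a~$1$-form evaluated on the tangent vector~$\left(\m{L}_{X^\omega_{\varphi_\xi}}J,\m{L}_{X^\omega_{\varphi_\xi}}\alpha\right)=(0,0)$, the second integral vanishes pointwise, leaving
\begin{equation}
\left\langle\f{m}_{\bm{\Omega}}(J,\alpha),\varphi_\xi\right\rangle=\int_M2\left(s(\omega,J)-\hat{s}\right)\varphi_\xi\,\frac{\omega^n}{n!},
\end{equation}
which is twice the classical Futaki invariant~$\m{F}(\xi)$ of the class; in particular it is independent of the representative metric, a fact I would inherit from the classical theory rather than reprove.

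The obstruction is then immediate. If~$\f{m}_{\bm{\Omega}}=0$ admits a solution~$\left(\omega,J,\alpha(\omega)\right)$ with~$\omega\in[\omega_0]$, the left-hand side above vanishes for every test function, so~$\m{F}(\xi)=0$ for every real~$\xi\in\f{h}_\alpha$; extending~$\bb{C}$-linearly over the complexification gives the vanishing on all of~$\f{h}_\alpha$. Because the quantity is metric-independent, this is a property of~$[\omega_0]$ itself, which is the assertion.

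I expect the only point requiring care to be the first step: verifying that the Lie-algebra condition cutting out~$\f{h}_\alpha$ genuinely forces \emph{both} entries of~$\left(\m{L}_{X^\omega_{\varphi_\xi}}J,\m{L}_{X^\omega_{\varphi_\xi}}\alpha\right)$ to vanish, so that the spectral term built from~$\mrm{d}^cf$ drops out of~\eqref{eq:mm_real_definition}. Securing this compatibility between the definition of~$\f{h}_\alpha$ and the shape of the moment map is essentially the whole content; after that, metric-independence comes for free from the classical Futaki invariant, and the conclusion is the standard formal moment-map argument.
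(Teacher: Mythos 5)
There is a genuine gap, and it sits exactly at the point you flag as ``the only point requiring care'': the condition cutting out~$\f{h}_\alpha$ does \emph{not} force~$\m{L}_{X^\omega_{\varphi_\xi}}\alpha=0$ (nor does~$\xi\in\f{h}_0$ force~$\m{L}_{X^\omega_{\varphi_\xi}}J=0$: a general element of~$\f{h}_0$ has a \emph{complex} holomorphy potential, and only fields with real holomorphy potential have Killing, hence~$J$-preserving, Hamiltonian flow). In the paper,~$\mrm{Aut}_\alpha$ is defined through the action~$\psi.\alpha\colon\omega\mapsto\psi^{-1}{}^*\left[\alpha(\psi^*\omega)\right]$ on \emph{horizontal sections} of~$\m{E}$, so the condition is the equivariance~$\alpha(\psi^*\omega)=\psi^*\left(\alpha(\omega)\right)$, not pointwise invariance of the tensor~$\alpha(\omega)$. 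Infinitesimally (Lemma~\ref{lem:h_alpha}) this reads~$\m{L}_X\alpha=\bdiff\left(\alpha(\diff h)^\sharp\right)$, equivalently that~$\m{L}_{X^\omega_h}\alpha(\omega)$ \emph{commutes with}~$J$; it is in general nonzero. The pair of conditions you use,~$\m{L}_{X^\omega_\varphi}J=0$ and~$\m{L}_{X^\omega_\varphi}\alpha=0$, instead cuts out the Lie algebra of the Hamiltonian stabilizer~$\m{G}_{J,\alpha}$ (this is what appears in the kernel statement of Theorem~\ref{thm:intro_linearizzazione}), which is a strictly smaller real subalgebra. Your final appeal to ``extending~$\bb{C}$-linearly over the complexification'' would require~$\f{h}_\alpha=\mrm{Lie}(\m{G}_{J,\alpha})\otimes\bb{C}$, i.e.\ a Matsushima-type theorem for this system --- precisely a statement the paper only \emph{expects} to hold when a solution exists (see the remark following the definition of~$\mrm{Aut}_\alpha$) and does not prove.

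The repair is the paper's Lemma~\ref{lem:Futaki}: for~$X\in\f{h}_\alpha$ the spectral term does drop out of the pairing, but for a subtler reason than evaluating a~$1$-form on the zero vector. One computes~$\mrm{d}^cf\left(\m{L}_{X^\omega_h}\alpha(\omega)\right)=-2\Re\,\mrm{Tr}\left(f'(\bar{\alpha}\alpha)\bar{\alpha}\left(\m{L}_{X^\omega_h}\alpha\right)J^\transpose\right)$, and this trace vanishes pointwise because~$\bar{\alpha}$ anti-commutes with~$J$ while~$f'(\bar{\alpha}\alpha)$ and (by Lemma~\ref{lem:h_alpha})~$\m{L}_{X^\omega_h}\alpha$ commute with~$J$, so the matrix inside the trace anti-commutes with~$J$ and is therefore trace-free. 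With that substitution your outline goes through and is essentially the paper's route: the pairing~$\left\langle\f{m}_{\bm{\Omega}},h\right\rangle$ collapses to the classical Futaki pairing, a zero of the moment map kills it, and metric-independence --- either of the classical invariant, as you propose, or of~$\m{F}_\alpha$ itself as the paper establishes via Propositions~\ref{prop:sigma_exact} and~\ref{prop:Futaki_vanish} --- makes the conclusion a property of the class~$[\omega_0]$ rather than of the particular solution metric.
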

Hence, the obstruction to the existence of solutions of~\eqref{eq:complessificate_intro} seem very close to those of the cscK equation. This phenomenon is similar to a well-known property of the Hermitian Yang-Mills equation and the Higgs bundle equations introduced by Hitchin: the existence of solutions to the HYM equation on a holomorphic bundle~$E\to M$ is equivalent to slope-stability of~$E$, a condition that must be checked on all proper subbundles. To characterize the existence of solutions to the Higgs bundle equations one can use the \emph{same} condition, which however needs to be checked just on a smaller set of subbundles, those that are compatible with the Higgs field. The Higgs bundle equations can be seen as an infinite-dimensional hyperk\"ahler reduction of the cotangent space of holomorphic structures on~$E$, so the Hitchin-cscK system obtained from~\eqref{eq:system_mm_implicit} by choosing~$f$ as in~\eqref{eq:hyperk_potential} is a direct analogue of the Higgs bundle equations in the category of polarized varieties.

Motivated by this analogy with Higgs bundles, in Section~\ref{sec:stability} we describe a version of K-stability that should characterize the existence of solutions to our system~\eqref{eq:complessificate_intro}. The idea is to identify, among all test configurations for a polarized variety~$(X,L)$, a suitable class of test configurations that are compatible with a deformation class~$\eta\in H^1(X,\m{E}_L)$. Briefly, a \emph{compatible test configuration} is a test configuration~$(\m{X},\m{L})$ for~$(X,L)$ together with a first-order deformation of the fibration~$\m{X}\to\bb{C}$ (as in~\cite{Horikawa_deformations1, Horikawa_deformations2}) that is~$\bb{C}^*$-invariant, compatible with the relative polarization~$\m{L}$, and restricts to~$\alpha$ on the general fibre. In other words, we ask all the fibres of the test configuration to admit a deformation ``in the same direction'' as~$\eta$. This condition on~$(\m{X},\m{L})$ is trivial outside of the central fibre, so the problem is to find conditions that guarantee that the deformation extends to the whole family. It seems natural to pose the following
\begin{conj}\label{conj:intro_Kstab}
Let~$(X,L)$ be a polarized complex variety, and let~$\eta\in H^1(X,\m{E}_L)$ be a deformation class of~$(X,L)$. Then~\eqref{eq:complessificate_intro} admits a solution~$\omega\in\mrm{c}_1(L)$,~$\alpha(\omega)\in\eta$, if and only if~$(X,L)$ is K-stable with respect to test configurations that are compatible with~$\eta$.
\end{conj}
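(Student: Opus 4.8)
The plan is to treat Conjecture~\ref{conj:intro_Kstab} as a Yau--Tian--Donaldson type correspondence, with an ``easy'' direction (a solution forces stability) and a ``hard'' direction (stability forces a solution). The whole strategy rests on the reduction furnished by Theorem~\ref{thm:intro_complexification}: extending $\alpha$ to a $D$-horizontal section of $\m{E}$ via the flat connection~\eqref{eq:connection_bundle} keeps us in the zero-locus of $\m{D}^*$ along complexified orbits, by Proposition~\ref{prop:intro_sol_complex_mm}, so the coupled system~\eqref{eq:complessificate_intro} collapses to a \emph{single} variational problem on the K\"ahler class, namely finding a critical point of the energy functional of Section~\ref{sec:energy} whose Euler--Lagrange equation is $\f{m}_{\bm{\Omega}}=0$. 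I will write $\m{M}_\eta$ for this functional, stressing that its dependence on the fixed deformation class $\eta$ enters through the horizontal lift $\alpha(\omega)$.

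For the easy direction, I would argue via convexity of $\m{M}_\eta$ along geodesic rays. A compatible test configuration $(\m{X},\m{L})$, carrying a $\bb{C}^*$-invariant first-order deformation restricting to $\eta$, should generate a weak geodesic ray $\omega_t$ in $\mrm{c}_1(L)$ together with its horizontal lift $\alpha(\omega_t)$ (this is exactly where the flatness of $D$, Proposition~\ref{prop:flat}, is indispensable for transporting $\alpha$ along the ray). The key computation is to identify the Donaldson--Futaki invariant of the compatible configuration with the asymptotic slope $\lim_{t\to\infty}\tfrac{d}{dt}\m{M}_\eta(\omega_t)$, thereby upgrading the infinitesimal obstruction of Proposition~\ref{prop:Futaki} to a global one. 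Granting convexity of $\m{M}_\eta$ along such rays---which should follow from the K\"ahler-reduction structure exactly as the K-energy is convex in the cscK problem---the existence of a critical point forces the slope to be non-negative, and strictly positive unless the ray is trivial, giving K-stability over compatible test configurations.

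The hard direction is where the real difficulty lies, and I would approach it variationally, aiming to show that K-stability over compatible test configurations is equivalent to properness of $\m{M}_\eta$ modulo the automorphisms of $(X,L)$ preserving $\eta$, and then that properness yields a minimiser solving~\eqref{eq:complessificate_intro}. This requires three ingredients, each a serious obstacle. First, one must prove a~priori estimates ($C^0$, then $C^2$ and higher) for the coupled system; the coupling with $\alpha$ is genuinely new, since the $\alpha(\omega)$ entering~\eqref{eq:mm_real_explicit} through $\bar{\alpha}\alpha\,f'(\bar{\alpha}\alpha)$ is determined by $\omega$ only implicitly, by parallel transport for $D$ and the solution of $\m{D}^*\alpha=0$, so estimates on $\alpha$ must be bootstrapped from the metric estimates and from the ellipticity established in Theorem~\ref{thm:intro_linearizzazione}. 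Second, one must build the non-Archimedean, algebraic-to-analytic dictionary---extending the Berman--Boucksom--Jonsson and Chen--Cheng machinery---so that the restriction to \emph{compatible} configurations corresponds precisely to the degeneration directions seen by $\m{M}_\eta$: proving that no incompatible configuration can destabilise, and conversely that every destabilising compatible ray is detected analytically, is the crux of matching the two sides of the conjecture, and I expect this to be the main obstacle.

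Finally, I would test the scheme in the toric setting, where the conjecture becomes tractable and which is the particular case announced in the abstract. Following Donaldson's reduction of toric cscK, the metric is encoded by a convex symplectic potential on the moment polytope $P$, while $\alpha$ and the constraint $\m{D}^*\alpha=0$ translate into linear-algebraic data over $P$, so that both $\m{M}_\eta$ and the compatible Donaldson--Futaki invariant become explicit convex functionals of piecewise-linear data. In this case the infinite-dimensional a~priori estimates collapse to a convex-optimisation argument on $P$, and the equivalence of properness with stability can be established directly; this is the natural first milestone before attempting the general analytic statement.
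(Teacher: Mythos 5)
This statement is a \emph{conjecture}, and the paper does not prove it: the author explicitly writes that it ``should be interpreted only as a guiding principle for future research.'' What the paper actually establishes are two partial results in its direction: the infinitesimal necessary condition of Proposition~\ref{prop:Futaki} (vanishing of the Futaki invariant on~$\f{h}_\alpha$, via Proposition~\ref{prop:Futaki_vanish} and Lemma~\ref{lem:Futaki}), and the toric existence theorem (Theorem~\ref{thm:intro_toric_existence}, Proposition~\ref{prop:toric_existence}), proved by a continuity method with a priori estimates imported from the Abreu-equation literature. Your proposal is likewise not a proof but a research program, and you are candid about that; the problem is that essentially every step that would constitute the proof is deferred. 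In the ``easy'' direction you assume a compatible test configuration generates a geodesic ray along which the horizontal lift~$\alpha(\omega_t)$ exists and the functional is convex, and that the Donaldson--Futaki invariant equals the asymptotic slope --- none of this is established, and even in the pure cscK case these are substantial theorems. The paper itself only proves the infinitesimal version (Futaki vanishing), precisely because upgrading it to stability along all compatible configurations is open. In the ``hard'' direction you correctly identify the obstacles (a priori estimates for the coupled system, a non-Archimedean dictionary restricted to compatible configurations) but offer no argument for either, so there is nothing to check.

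Your toric paragraph, which is the one place where a complete argument could be given (and is given in Section~\ref{sec:toric} of the paper), also contains a concrete error. You say that ``$\alpha$ and the constraint~$\m{D}^*\alpha=0$ translate into linear-algebraic data over~$P$.'' In fact the paper observes that for torus-invariant, integrable~$\alpha$ the equation~$\m{D}^*\alpha=0$ forces~$\alpha=0$; consequently the toric verification concerns \emph{only} the real moment map equation~\eqref{eq:toric_realeq} with an arbitrary deformation~$\alpha=G^{-1}D^2h$, not the full system~\eqref{eq:complessificate_intro}, and compatibility of test configurations becomes vacuous there (torus-invariant holomorphic vector fields all lie in~$\f{h}_\alpha$). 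Moreover the toric existence proof is not a ``convex-optimisation argument on piecewise-linear data'': it is a genuine PDE continuity method~\eqref{eq:toric_contmet}, where openness comes from the convexity of the functional (Proposition~\ref{prop:toric_convexity}) and self-adjointness of the linearization, and closedness comes from a priori~$\m{C}^{4,\alpha}$ estimates obtained by perturbing the uniform stability constant and invoking the estimates of Chen--Cheng and An--Min Li et al.\ for Abreu's equation --- valid only for~$\norm{H}_{\m{C}^2}$ below an explicit threshold. So even the tractable special case requires hard analysis that your outline replaces with an optimistic reduction, and the general conjecture remains exactly that: open.
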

We refer to Section~\ref{sec:stability} for a more detailed discussion of this stability condition, which we formulate more precisely for an arbitrary K\"ahler manifold, following~\cite{DervanRoss_Kstab}.

\paragraph*{The toric case.} In Section~\ref{sec:toric}, we study the moment map equation~\eqref{eq:mm_real_explicit} on a toric manifold. Rather than studying the complexified equation, we fix a torus invariant symplectic form, and we look for torus-invariant complex structures and first-order deformations that solve~\eqref{eq:mm_real_explicit}. We will see, however, that the description in symplectic coordinates of deformations of complex structures implicitly parametrize a complexified orbit of the action~$\m{G}\curvearrowright\cotJ$.

Using this description of the complexification, we generalize some of the results already appeared in~\cite{ScarpaStoppa_symplectic} to the case of arbitrary spectral functions in the definition of~\eqref{eq:metrica_TJ}, verifying a special case of Conjecture~\ref{conj:intro_Kstab}. Notice that, since we only consider torus-invariant deformations~$\alpha$, the integrability condition together with the moment map equation~$\m{D}^*\alpha=0$ imply~$\alpha=0$. Hence, in the toric case we just study the real moment map equation \eqref{eq:mm_real_explicit}, rather than the full system \eqref{eq:complessificate_intro}.
\begin{thm}\label{thm:intro_toric_existence}
Let~$M$ be a toric manifold. For any small enough torus-invariant deformation of the complex structure, there exists a torus-invariant solution of~\eqref{eq:mm_real_explicit} if and only if~$M$ is (toric) uniformly K-stable.
\end{thm}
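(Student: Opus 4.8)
The plan is to reduce the equation to the Delzant polytope and to run a variational argument parallel to the toric cscK theory, treating the deformation as a controlled perturbation of the Abreu equation; for the specific hyperk\"ahler potential this was carried out in~\cite{ScarpaStoppa_symplectic}, and the task is to extend it to a general convex, non-decreasing $k$ as in~\eqref{eq:spectral_k}. Let $P\subset\bb{R}^n$ be the polytope of $M$ for the fixed torus-invariant symplectic form, and work in action--angle coordinates. A torus-invariant compatible complex structure is encoded by a symplectic potential $u$, a convex function on $P$ with the Guillemin boundary behaviour, whose Hessian gives the K\"ahler metric and whose inverse Hessian $(u^{ij})$ enters Abreu's formula $s=-\sum_{i,j}\diff_i\diff_j u^{ij}$. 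As announced in Section~\ref{sec:toric}, a torus-invariant first-order deformation $\alpha$ is recorded by the same symplectic-coordinate data, so that $(J,\alpha)$ is encoded by $u$ together with a parameter measuring the size of $\alpha$. The first step is to rewrite~\eqref{eq:mm_real_explicit} as a fourth-order scalar PDE on $P$ for $u$: the scalar-curvature part reproduces the Abreu operator, while the divergence term built from $\bar{\alpha}\alpha\, f'(\bar{\alpha}\alpha)$ contributes a lower-order correction $Q_\alpha(u)$ that vanishes at $\alpha=0$ and enters through $k$.

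The second step is to exhibit this reduced equation as the Euler--Lagrange equation of a functional $\m{F}_\alpha$ on the space of convex symplectic potentials, a perturbation of Donaldson's functional. I would decompose $\m{F}_\alpha=\m{L}-\m{S}+\m{R}_\alpha$, where $\m{L}(u)=\int_{\diff P}u\,\mrm{d}\sigma-\int_P\hat{s}\,u\,\mrm{d}x$ is the linear Donaldson--Futaki functional, $\m{S}(u)=-\int_P\log\det(\mrm{Hess}\,u)\,\mrm{d}x$ is the entropy term, and $\m{R}_\alpha$ is the correction coming from $Q_\alpha$. The decisive observation---matching Proposition~\ref{prop:Futaki}---is that $\m{L}$ is unchanged by the deformation: $\m{R}_\alpha$ is of lower order and does not affect the linear, boundary-integral part. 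Hence (toric) uniform K-stability is equivalent, exactly as for cscK, to the uniform positivity $\m{L}(u)\ge\varepsilon\,\norm{u}$ on normalized convex functions, for the standard normalization in Donaldson's formulation of toric stability.

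For the ``if'' direction I would argue by coercivity. Uniform K-stability yields $\m{L}(u)\ge\varepsilon\norm{u}$, which for $\alpha=0$ makes $\m{F}_0$ coercive and, by the Chen--Cheng a priori estimates together with Donaldson's interior and boundary regularity for the Abreu equation, produces a smooth minimizer, i.e.\ a cscK symplectic potential. For small $\alpha$ the correction $\m{R}_\alpha$ is a controlled perturbation---bounded by a constant times $\norm{\alpha}$ times a sublinear function of $u$---so $\m{F}_\alpha$ stays coercive with the same $\varepsilon$ up to a small loss, and the a priori estimates persist with constants depending continuously on $\alpha$. This gives a minimizer of $\m{F}_\alpha$, which by strict convexity of $\m{S}$ and ellipticity of the reduced operator is a genuine smooth solution on $P$, hence a torus-invariant solution of~\eqref{eq:mm_real_explicit} on $M$. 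Conversely, for the ``only if'' direction, if a solution exists then $u_\alpha$ minimizes $\m{F}_\alpha$, so $\m{F}_\alpha$ is bounded below; evaluating along rays generated by rational piecewise-linear convex functions (the toric test configurations) and using that $\m{S}$ and $\m{R}_\alpha$ grow sublinearly while $\m{L}$ is linear, one recovers $\m{L}(u)\ge\varepsilon\norm{u}$, i.e.\ uniform K-stability, with the same conclusion as in the cscK case because $\m{R}_\alpha$ is lower order.

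The main obstacle is the analytic input in the ``if'' direction: passing from coercivity of $\m{F}_\alpha$ to existence and regularity of a solution of a fully nonlinear, degenerate-elliptic fourth-order equation is already subtle for the Abreu equation, and here one must show that the Chen--Cheng--Donaldson estimates are stable under the $\alpha$-perturbation, uniformly for small $\alpha$. Controlling $\m{R}_\alpha$---in particular its interaction with the boundary behaviour of $u$ and with the degeneracy of the linearized operator near $\diff P$---is the delicate point, and the hypothesis that $\alpha$ be small is precisely what keeps the perturbation within the regime where the unperturbed estimates can be bootstrapped.
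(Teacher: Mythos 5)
Your framework---reduction to the polytope, and a functional decomposed into the linear Donaldson functional, an entropy term and an $f$-correction---matches the paper's Proposition~\ref{prop:toric_convexity}, but both directions of your argument have genuine gaps where the paper's actual mechanisms are missing. In the ``if'' direction, your plan is direct minimization: coercivity of $\m{F}_\alpha$ plus ``stability of the Chen--Cheng--Donaldson estimates under the $\alpha$-perturbation''. That last step is exactly the hard point, and you leave it open; note moreover that the correction is \emph{not} lower order: since $\alpha=G^{-1}H$ with $G=D^2u$, the term $\diff_a\diff_b\bigl(2f'(\bar{\alpha}\alpha)\bar{\alpha}\alpha\,G^{-1}\bigr)^{ab}$ in \eqref{eq:toric_realeq} is fourth order in $u$, the same order as Abreu's operator (this is why Theorem~\ref{thm:intro_linearizzazione} must check that it contributes to the principal symbol with a favourable sign). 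The paper sidesteps reproving any estimates by a different device: the continuity method \eqref{eq:toric_contmet} of Proposition~\ref{prop:toric_existence}, whose openness follows from the convexity in Proposition~\ref{prop:toric_convexity}, and whose closedness follows by \emph{absorbing} the deformation term into the prescribed density, rewriting the equation as the genuine Abreu equation $-u^{ab}_{,ab}=A_t$ with $A_t=A_0+2t\bigl(f'(\bar{\alpha}\alpha)\bar{\alpha}\alpha\,G^{-1}\bigr)^{ab}_{,ab}$. Smallness of $\norm{H}_{\m{C}^2}$ keeps $(P,A_t)$ uniformly stable with threshold $\lambda'=\lambda-C(1-\lambda)A_0^{-1}$, so the a priori estimates of \cite{AnMinLi_prescribed_toric} (after \cite{ChenCheng_estimates}) for prescribed-curvature Abreu equations apply verbatim, uniformly along the path. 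Without this trick, or a substitute for it, your scheme has no route from coercivity of $\m{F}_\alpha$ to a smooth solution of the perturbed fourth-order equation.

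The ``only if'' direction also falls short of the statement. Evaluating the minimized functional along rays $u_\alpha+tv$ with $v$ piecewise-linear convex yields, in the limit $t\to\infty$, only $\m{L}(v)\geq 0$: the entropy term increases along convex rays (so its contribution to the functional decreases) and the $f$-term stays bounded, so minimality of $u_\alpha$ gives a bound of the form $t\,\m{L}(v)\geq -C$, i.e.\ toric K-\emph{semi}stability. Boundedness below of a functional whose behaviour along rays is asymptotically linear cannot by itself produce the strict constant $\varepsilon$ in $\m{L}(v)\geq\varepsilon\norm{v}$. The paper instead argues pointwise from the solution: Lemma~\ref{lemma:intbyparts_realmm} gives, for every convex $v$, the identity $\int_{\diff P}v\,\mrm{d}\sigma-\int_PA_0v\,\mrm{d}\mu=\int_P\mrm{Tr}\bigl(\bigl(1+2f'(\bar{\alpha}\alpha)\bar{\alpha}\alpha\bigr)G^{-1}D^2v\bigr)\mrm{d}\mu$, and the positive-definiteness of $\bigl(1+2f'(\bar{\alpha}\alpha)\bar{\alpha}\alpha\bigr)G^{-1}$ (Remark~\ref{eq:toric_posdef}, which bounds it below by $G^{-1}$ since $k'\geq 0$) converts this identity into the uniform estimate. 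You would need this, or an equivalent quantitative compactness argument on normalized convex functions, to upgrade your semistability conclusion to uniform K-stability.
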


\paragraph*{Acknowledgements.} The author is grateful to Jacopo Stoppa and Ruadha\'i Dervan for many discussions and useful suggestions on an earlier version of this paper.

\section{The complexified system}\label{sec:complexifiedsystem}

\subsection{A vector bundle on the K\"ahler class}\label{sec:bundle_class}

Fix an integrable complex structure~$J$, and let~$\Omega$ be a K\"ahler class on~$(M,J)$. We define a vector bundle~$\m{E}\to\Omega$ as follows: consider the set~$\m{A}$ of all first-order integrable deformations of~$J$
\begin{equation}
\m{A}=\set*{\alpha\in\m{A}^{0,1}(T^{1,0}M)\tc\bdiff\alpha=0}
\end{equation}
and denote by~$\m{E}\subseteq\m{A}\times\Omega$ the set of all compatible pairs~$(\alpha,\omega)$, where~$\alpha$ and~$\omega$ are compatible if and only if~$g_J(\alpha\cdot,\cdot)\in\Gamma(T^*M\otimes T^*M)$ is symmetric.

Together with the projection on the second component,~$\m{E}$ is a vector bundle on the K\"ahler class. For~$\omega\in\Omega$, we denote by~$\m{E}_\omega$ fibre over~$\omega$ of~$\m{E}\to\Omega$, so that~$\m{E}_\omega$ is the set of all integrable first-order deformations that are compatible with~$\omega$.

\begin{lemma}\label{lemma:compatibility_class}
Fix a K\"ahler form~$\omega\in\Omega$. A first-order deformation class~$\eta\in H^1(T^{1,0}M)$ intersects~$\m{E}_\omega$ if and only if~$\eta\cdot\Omega=0$, where~$\cdot$ denotes the composition of cup product with contraction
\begin{equation}\label{eq:cohom_product}
H^1(M,T^{1,0}M)\times H^1(M,T^*{}^{1,0}M)\xrightarrow{\cup} H^2(M,T^{1,0}M\times T^*{}^{1,0}M)\xrightarrow{c}H^2(M).
\end{equation}
In particular, a deformation class either intersects every fibre of~$\m{E}\to\Omega$ or it does not intersect any.
\end{lemma}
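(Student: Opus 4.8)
The plan is to reduce the cohomological statement to a single pointwise identity: the compatibility condition on~$\alpha$ is \emph{exactly} the pointwise vanishing of the contraction~$c(\alpha\wedge\omega)$ that defines the product in~\eqref{eq:cohom_product}. To see this, I would work in local holomorphic coordinates, writing~$\alpha=\alpha^i_{\bar{j}}\,d\bar{z}^j\otimes\diff_i$ and viewing~$\omega=\I g_{i\bar{l}}\,dz^i\wedge d\bar{z}^l$ as a~$T^*{}^{1,0}M$-valued~$(0,1)$-form, which represents~$\Omega\in H^{1,1}(M)\cong H^1(M,T^*{}^{1,0}M)$ under the Dolbeault isomorphism. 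Wedging the form parts and applying the trace~$c\colon T^{1,0}M\otimes T^*{}^{1,0}M\to\m{O}_M$ gives~$c(\alpha\wedge\omega)=\I\,\alpha^i_{\bar{j}}g_{i\bar{l}}\,d\bar{z}^j\wedge d\bar{z}^l$, a~$(0,2)$-form whose coefficient is the antisymmetric part of~$\alpha^i_{\bar{j}}g_{i\bar{l}}=g_J(\alpha\,\cdot,\cdot)$. Thus~$c(\alpha\wedge\omega)=0$ pointwise if and only if~$g_J(\alpha\,\cdot,\cdot)$ is symmetric, i.e.~$\alpha\in\m{E}_\omega$.

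With this identity the forward direction is immediate. If~$\alpha\in\eta\cap\m{E}_\omega$, then~$\alpha$ is~$\bdiff$-closed by integrability and~$\omega$ is~$\bdiff$-closed as a closed~$(1,1)$-form, so~$c(\alpha\wedge\omega)$ is a~$\bdiff$-closed representative of~$\eta\cdot\Omega$ in~$H^{0,2}(M)=H^2(M,\m{O}_M)$; by the pointwise identity it vanishes identically, whence~$\eta\cdot\Omega=0$.

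For the converse, assume~$\eta\cdot\Omega=0$ and pick any integrable representative~$\alpha_0\in\eta$. Then~$c(\alpha_0\wedge\omega)$ is~$\bdiff$-closed with trivial Dolbeault class, hence~$\bdiff$-exact. The key computation is that modifying the representative by a~$\bdiff$-exact term changes the contraction in a controlled way: for~$\psi\in\m{A}^0(T^{1,0}M)$ one has~$c(\bdiff\psi\wedge\omega)=\bdiff(\iota_\psi\omega)$, where~$\iota_\psi\omega=\psi^i\omega_{i\bar{l}}\,d\bar{z}^l$. This identity is where the K\"ahler hypothesis enters: the extra term~$\psi^i(\diff_{\bar{j}}\omega_{i\bar{l}})\,d\bar{z}^j\wedge d\bar{z}^l$ produced by the Leibniz rule is symmetric in the barred indices precisely because~$\bdiff\omega=0$, hence it is annihilated by the wedge. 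Since~$\omega$ is non-degenerate,~$\psi\mapsto\iota_\psi\omega$ is a pointwise isomorphism onto~$(0,1)$-forms, so I can solve~$\bdiff(\iota_\psi\omega)=-c(\alpha_0\wedge\omega)$ (possible because the right-hand side is exact) and set~$\alpha=\alpha_0+\bdiff\psi$. This~$\alpha$ is still integrable and still represents~$\eta$, and by construction~$c(\alpha\wedge\omega)=0$, so~$\alpha\in\eta\cap\m{E}_\omega$.

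Finally, the ``in particular'' clause follows because~$\eta\cdot\Omega$ depends only on the cohomology class~$\Omega$ and not on the chosen form~$\omega\in\Omega$, so~$\eta$ meets~$\m{E}_\omega$ for one~$\omega$ precisely when it meets it for all. I expect the main obstacle to be the bookkeeping in the identity~$c(\bdiff\psi\wedge\omega)=\bdiff(\iota_\psi\omega)$ --- checking how the trace~$c$ intertwines with~$\bdiff$ and verifying that closedness of~$\omega$ is exactly what kills the unwanted term --- together with pinning down the precise meaning of the target group in~\eqref{eq:cohom_product} as~$H^{0,2}(M)$. Everything else is formal homological algebra on Dolbeault representatives.
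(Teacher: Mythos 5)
Your proof is correct and follows essentially the same route as the paper: both identify compatibility with the pointwise vanishing of the contracted $(0,2)$-form $\alpha\indices{_{\bar{c}}^a}\omega_{a\bar{b}}\,\mrm{d}\bar{z}^b\wedge\mrm{d}\bar{z}^c$, characterize $\eta\cdot\Omega=0$ as $\bdiff$-exactness of that form, and then correct the representative by $\bdiff$ of a vector field obtained from the primitive $(0,1)$-form via the (equivalent) metric or symplectic duality --- your identity $c(\bdiff\psi\wedge\omega)=\bdiff(\iota_\psi\omega)$ is exactly the computation the paper invokes, with $\psi=\I\vartheta^\sharp$.
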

\begin{proof}
Fix~$\alpha\in\eta$ and~$\omega\in\Omega$. The compatibility condition between~$\alpha$ and~$\omega$ can be restated as~$\omega_{a\bar{b}}\alpha\indices{_{\bar{c}}^a}\mrm{d}\bar{z}^b\wedge\mrm{d}\bar{z}^c=0$, while~$\eta\cdot\Omega=0$ if and only if there is a~$(0,1)$-form~$\vartheta$ such that
\begin{equation}
\omega_{a\bar{b}}\alpha\indices{_{\bar{c}}^a}\mrm{d}\bar{z}^b\wedge\mrm{d}\bar{z}^c=\bdiff\vartheta.
\end{equation}
In this case, we see that~$\alpha'\coloneqq\alpha+\bdiff\left(\I\vartheta^\sharp\right)$ is compatible with~$\omega$. The same computation shows the vice-versa: if~$\alpha+\bdiff V$ is compatible with~$\omega$ for some~$(1,0)$-vector field~$V$, then
\begin{equation}
\omega_{a\bar{b}}\alpha\indices{_{\bar{c}}^a}\mrm{d}\bar{z}^b\wedge\mrm{d}\bar{z}^c=-\bdiff\left(V\lrcorner\omega\right).\qedhere
\end{equation}
\end{proof}
\begin{rmk}\label{rmk:def_polarized}
In the case when~$\Omega$ is an integral class,~$\Omega=\mrm{c}_1(L)$ for some ample line bundle~$L\to M$, the condition~$\eta\cdot\Omega=0$ is equivalent to the existence of a deformation of the pair~$(M,L)$ covering the deformation of~$M$ induced by~$\eta$, see for example \cite[Theorem~$3.3.11$]{Sernesi_deformations}.
\end{rmk}
We consider on~$\m{E}$ the connection defined in~\eqref{eq:connection_bundle}, which defines a horizontal distribution~$\m{H}$ on~$\m{E}$. Identifying~$T\Omega$ with~$\Omega\times\m{C}^\infty(M)$ through the~$\diff\bdiff$-Lemma, the distribution at a point~$(\alpha,\omega)\in\m{E}$ is
\begin{equation}\label{eq:hor_distr}
\m{H}_{\alpha,\omega}=\set*{\left(\bdiff\left(\alpha(\diff\varphi)^{\sharp_\omega}\right),\varphi\right)\tc\varphi\in\m{C}^\infty(M)}.
\end{equation}
\begin{lemma}\label{lemma:horiz_distribution}
Equation~\eqref{eq:hor_distr} describes a horizontal distribution on the bundle~$\m{E}\to\Omega$.
\end{lemma}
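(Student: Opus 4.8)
The plan is to verify the two defining properties of a horizontal distribution: that each subspace $\m{H}_{\alpha,\omega}$ in~\eqref{eq:hor_distr} is a linear subspace of the tangent space $T_{(\alpha,\omega)}\m{E}$, and that it is complementary to the vertical subspace $V_{(\alpha,\omega)}=T_\alpha\m{E}_\omega$, projecting isomorphically onto $T_\omega\Omega$ under $\mrm{d}\pi$. First I would describe $T_{(\alpha,\omega)}\m{E}$ explicitly. Since $\m{E}$ is cut out inside $\m{A}\times\Omega$ by the two conditions $\bdiff\alpha=0$ and the symmetry of $g_J(\alpha\cdot,\cdot)$, a pair $(\dot\alpha,\dot\omega)$ is tangent to $\m{E}$ precisely when $\bdiff\dot\alpha=0$ and the linearization of the symmetry condition holds; writing the compatibility defect as $\omega_{a\bar{b}}\alpha\indices{_{\bar{c}}^a}\mrm{d}\bar{z}^b\wedge\mrm{d}\bar{z}^c$, the latter means that $\dot\omega_{a\bar{b}}\alpha\indices{_{\bar{c}}^a}+\omega_{a\bar{b}}\dot\alpha\indices{_{\bar{c}}^a}$ is symmetric in $\bar{b},\bar{c}$.

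The main step is to check that the candidate vector $\bigl(\bdiff(\alpha(\diff\varphi)^{\sharp_\omega}),\I\diff\bdiff\varphi\bigr)$ satisfies both constraints. Integrability is automatic, since $\bdiff\bdiff(\alpha(\diff\varphi)^{\sharp_\omega})=0$. For the compatibility constraint I would run the infinitesimal version of the computation already carried out in the proof of Lemma~\ref{lemma:compatibility_class}: setting $V=\alpha(\diff\varphi)^{\sharp_\omega}$, the term $\omega_{a\bar{b}}\diff_{\bar{c}}V^a\,\mrm{d}\bar{z}^b\wedge\mrm{d}\bar{z}^c$ equals $-\bdiff(V\lrcorner\omega)$ up to a symmetric piece that drops out by the Kähler identity $\diff_{\bar{c}}\omega_{a\bar{b}}=\diff_{\bar{b}}\omega_{a\bar{c}}$, and this precisely cancels the antisymmetric part produced by moving $\omega$ in the direction $\I\diff\bdiff\varphi$. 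In other words, the defect created to first order by deforming the symplectic form is exactly the $\bdiff$-exact term that the horizontal correction $\bdiff(\alpha(\diff\varphi)^{\sharp_\omega})$ is designed to absorb, so the linearized symmetry condition holds and the vector is tangent to $\m{E}$. This index computation, though routine, is the real content of the lemma, since everything else is formal; I expect the sign bookkeeping here to be the only genuine obstacle, and it is already controlled by Lemma~\ref{lemma:compatibility_class}.

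It then remains to assemble the distribution. Linearity in $\varphi$ is immediate from the formula, so $\m{H}_{\alpha,\omega}$ is a linear subspace. To see it is complementary to the vertical subspace, I would note that a horizontal vector is vertical only if $\I\diff\bdiff\varphi=0$; on the compact Kähler manifold $M$ this forces $\varphi$ to be constant, whence $\diff\varphi=0$ and the whole vector vanishes, giving $\m{H}_{\alpha,\omega}\cap V_{(\alpha,\omega)}=0$. Conversely, given any $(\dot\alpha,\dot\omega)\in T_{(\alpha,\omega)}\m{E}$, the $\diff\bdiff$-Lemma lets me write $\dot\omega=\I\diff\bdiff\varphi$ for some $\varphi\in\m{C}^\infty(M)$, and subtracting the corresponding horizontal vector leaves a pair with vanishing base component, hence a vertical one. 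Thus $T_{(\alpha,\omega)}\m{E}=\m{H}_{\alpha,\omega}\oplus V_{(\alpha,\omega)}$, and since the horizontal lift $\varphi\mapsto\bigl(\bdiff(\alpha(\diff\varphi)^{\sharp_\omega}),\varphi\bigr)$ shares the same kernel (the constants) as the base map $\varphi\mapsto\I\diff\bdiff\varphi$, the restriction $\mrm{d}\pi\colon\m{H}_{\alpha,\omega}\to T_\omega\Omega$ is an isomorphism under the identification $T_\omega\Omega\cong\m{C}^\infty(M)/\bb{R}$. This establishes that~\eqref{eq:hor_distr} is a horizontal distribution.
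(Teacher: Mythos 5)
Your proposal is correct and follows essentially the same route as the paper: linearize the compatibility constraint, check that the candidate vector $\left(\bdiff\left(\alpha(\diff\varphi)^{\sharp_\omega}\right),\I\diff\bdiff\varphi\right)$ satisfies it using the K\"ahler identity, and obtain the splitting $T\m{E}=\m{H}\oplus V$ by subtracting the horizontal lift from an arbitrary tangent vector. The only point worth making explicit is that your ``precise cancellation'' step also requires the integrability $\bdiff\alpha=0$ (needed to identify $\bdiff\left(V\lrcorner\omega\right)$ with the antisymmetric part of $\alpha\,\diff\bdiff\varphi$ after expanding the derivative), an ingredient the paper's index computation invokes explicitly.
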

\begin{proof}
To prove that~$\m{H}\subset T\m{E}$, we need to show that, for every~$\omega\in\Omega$ and every~$\alpha\in\m{E}_\omega$, the~$1$-form~$\alpha+\varepsilon\,\bdiff\left(\alpha(\diff\varphi)^\sharp\right)$ is compatible with~$\omega+\varepsilon\,\I\diff\bdiff\varphi$, to first order in~$\varepsilon$. This amounts to showing that
\begin{equation}
\diff_{\bar{b}}\left(\alpha\indices{_{\bar{c}}^d}\diff_d\varphi g^{a\bar{c}}\right)g_{a\bar{e}}+\alpha\indices{_{\bar{b}}^a}\diff_a\diff_{\bar{e}}\varphi
\end{equation}
is symmetric in the indices~$\bar{b}$,~$\bar{e}$. Expanding the derivative in the first term, we get
\begin{equation}
\begin{split}
\diff_{\bar{b}}\alpha\indices{_{\bar{e}}^d}&\,\diff_d\varphi
+\alpha\indices{_{\bar{e}}^d}\,\diff_{\bar{b}}\diff_d\varphi
+\alpha\indices{_{\bar{c}}^d}\diff_d\varphi\diff_{\bar{b}}g^{a\bar{c}}\,g_{a\bar{e}}
+\alpha\indices{_{\bar{b}}^a}\diff_a\diff_{\bar{e}}\varphi=\\
=&\diff_{\bar{b}}\alpha\indices{_{\bar{e}}^d}\,\diff_d\varphi+\alpha\indices{_{\bar{e}}^d}\,\diff_{\bar{b}}\diff_d\varphi+\alpha\indices{_{\bar{b}}^d}\diff_{\bar{e}}\diff_d\varphi-\alpha\indices{_{\bar{c}}^d}\diff_d\varphi \diff_{\bar{b}}g_{q\bar{e}}\,g^{q\bar{c}}.
\end{split}
\end{equation}
The first term is symmetric in~$\bar{b}$ and~$\bar{e}$, as~$\alpha$ is integrable; the second and third are the symmetrization of~$\alpha\diff\bdiff\varphi$, while the fourth is symmetric as~$\omega$ is K\"ahler. This shows~$\m{H}\subset T\m{E}$; to prove that~$T\m{E}=\m{H}\oplus TV\m{E}$, consider a tangent vector~$(\dot{\alpha},\varphi)\in T_{\alpha,\omega}\m{E}$; as both~$\alpha+\varepsilon\,\dot{\alpha}$ and~$\alpha+\varepsilon\,\bdiff\left(\alpha(\diff\varphi)^\sharp\right)$ are compatible with~$\omega+\varepsilon\,\I\diff\bdiff\varphi$ to first order in~$\varepsilon$,~$\dot{\alpha}-\bdiff\left(\alpha(\diff\varphi)^\sharp\right)$ is compatible with~$\omega$, so that~$\left(\dot{\alpha}-\bdiff\left(\alpha(\diff\varphi)^\sharp\right),0\right)$ is a vertical tangent vector to~$\m{E}$.
\end{proof}
Let~$\omega_t=\omega_0+\I\diff\bdiff h_t$ be an analytic path in~$\Omega$, and fix~$\alpha\in\m{E}_{\omega_0}$. The unique~$D$-horizontal lift~$\alpha_t$ of~$\omega_t$ starting from~$\alpha$ is given by the solution~$\alpha_t$ of the parabolic equation
\begin{equation}\label{eq:hor_lift}
\begin{dcases}
\diff_t\alpha_t=\bdiff\left(\alpha_t(\diff h'_t)^{\sharp_t}\right)\\
\alpha_0=\alpha
\end{dcases}
\end{equation}
A solution of~$\eqref{eq:hor_lift}$ is a path of forms~$\alpha_t$. It is easy to check that each~$\alpha_t$ is compatible with the respective metric~$\omega_t$, either from Lemma \ref{lemma:horiz_distribution} or through direct computation, since~$\alpha_0$ is compatible with~$\omega_0$ and \eqref{eq:hor_lift} implies that~$\diff_t\left(g_t(\alpha_t\cdot,\cdot)-g_t(\cdot,\alpha_t\cdot)\right)\equiv 0$.

The method of characteristics shows that equation \eqref{eq:hor_lift} can be solved by a smooth~$\alpha_t$, but it is difficult to find an explicit expression for the horizontal lift of a path in~$\Omega$. In Section~\ref{sec:toric} we will give a solution in the context of toric manifolds and torus-invariant deformations of complex structures. There is a case in which the lift is trivial: if~$\alpha(\diff h)=0$, then~$\alpha$ is compatible with all the symplectic forms~$\omega_t\coloneqq \omega_0+\I\diff\bdiff\,th$, and the lift of this path in~$\Omega$ is the constant path~$\alpha_t\equiv\alpha$. Notice however that in general, even if~$\alpha$ is already compatible with~$\omega_0+\I\diff\bdiff h$, i.e.~$\bdiff\left(\alpha\left(\diff h\right)\right)=0$, we still have to move~$\alpha$ along a non-constant path~$\alpha_t$.
\begin{rmk}\label{rmk:hor_lift_class}
For any~$\omega\in\Omega$ any first-order deformation~$\alpha\in\m{E}_\omega$ defines an infinitesimal deformation class~$[\alpha]\in H^1(T^{1,0}M)$. If~$\alpha_t$ is the horizontal lift of a path~$\set{\omega_t}\subset\Omega$, the class~$[\alpha_t]\in H^1(T^{1,0}X)$ is constant: indeed, \eqref{eq:hor_lift} shows that~$[\alpha_0-\alpha_t]\equiv 0$ in~$H^1(T^{1,0}X)$.
\end{rmk}

\begin{prop}\label{prop:flat}
The connection~$D$ defined by~\eqref{eq:connection_bundle} is flat.
\end{prop}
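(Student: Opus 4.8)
The plan is to compute the curvature of $D$ directly and show it vanishes as a (fibrewise-linear) endomorphism-valued $2$-form on the base. Since the K\"ahler class $\Omega$ is an affine space modelled on $\m{C}^\infty(M)/\bb{R}$ (via the $\diff\bdiff$-lemma), I identify tangent vectors with functions and work with the associated \emph{constant} vector fields $\diff_\varphi,\diff_\psi$ on $\Omega$, which commute, $[\diff_\varphi,\diff_\psi]=0$. Writing the connection as $D_\varphi=\diff_\varphi-A_\varphi$, where $\diff_\varphi$ is the ambient variation of a section inside $\m{A}\times\Omega$ and $A_\varphi\alpha=\bdiff\big(\alpha(\diff\varphi)^{\sharp_\omega}\big)$ is the connection form from~\eqref{eq:connection_bundle}, the curvature collapses to
\[
F(\diff_\varphi,\diff_\psi)\alpha=-(\diff_\varphi A_\psi)\alpha+(\diff_\psi A_\varphi)\alpha+[A_\varphi,A_\psi]\alpha,
\]
and the whole problem becomes showing this vanishes for every $\alpha\in\m{E}_\omega$ and all $\varphi,\psi$.

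Next I would unwind the three terms. Setting $V_\varphi\coloneqq(\diff\varphi)^{\sharp_\omega}$, a $(0,1)$-vector field with $V_\varphi^{\bar{c}}=g^{\bar{c}d}\diff_d\varphi$, the commutator expands through $A_\varphi(A_\psi\alpha)=\bdiff\big((\bdiff(\alpha(V_\psi)))(V_\varphi)\big)$ into a piece in which both $\bdiff$-derivatives fall on $\alpha$ and a piece in which one falls on the vector field. The first piece is proportional to $V_\varphi^{\bar{b}}V_\psi^{\bar{c}}\big(\diff_{\bar{b}}\alpha\indices{_{\bar{c}}^a}-\diff_{\bar{c}}\alpha\indices{_{\bar{b}}^a}\big)$, which vanishes identically by the integrability $\bdiff\alpha=0$ built into the definition of $\m{A}$; the second assembles into $[A_\varphi,A_\psi]\alpha=\bdiff\big(\alpha([V_\varphi,V_\psi])\big)$. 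The variation terms only see the $\omega$-dependence of the raising operator $\sharp_\omega$; differentiating $g^{\bar{c}d}$ along $\I\diff\bdiff\psi$ (so $\delta_\psi g_{p\bar{q}}=\diff_p\diff_{\bar{q}}\psi$ and $\delta_\psi g^{\bar{c}d}=-g^{\bar{c}p}\,\diff_p\diff_{\bar{q}}\psi\,g^{\bar{q}d}$) yields $-(\diff_\varphi A_\psi)\alpha+(\diff_\psi A_\varphi)\alpha=\bdiff\big(\alpha(\diff_\psi V_\varphi-\diff_\varphi V_\psi)\big)$. Hence the curvature reduces to $F\alpha=\bdiff\big(\alpha(\diff_\psi V_\varphi-\diff_\varphi V_\psi+[V_\varphi,V_\psi])\big)$.

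The heart of the matter is then the purely pointwise identity $\diff_\psi V_\varphi-\diff_\varphi V_\psi+[V_\varphi,V_\psi]=0$ for $(0,1)$-vector fields. Since the left-hand side is manifestly tensorial, I would verify it at an arbitrary point $x_0$ in normal K\"ahler coordinates for $g$, where all first derivatives of the metric vanish. There the bracket reduces to $[V_\varphi,V_\psi]^{\bar{c}}=g^{\bar{c}d}\big(V_\varphi^{\bar{b}}\diff_{\bar{b}}\diff_d\psi-V_\psi^{\bar{b}}\diff_{\bar{b}}\diff_d\varphi\big)$, while the variation difference becomes $-g^{\bar{c}d}\big(\diff_d\diff_{\bar{q}}\psi\,V_\varphi^{\bar{q}}-\diff_d\diff_{\bar{q}}\varphi\,V_\psi^{\bar{q}}\big)$, and the two cancel termwise once one uses that mixed partials commute, $\diff_{\bar{b}}\diff_d\psi=\diff_d\diff_{\bar{b}}\psi$. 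This gives $F\equiv 0$.

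I expect the main obstacle to be bookkeeping rather than conceptual: one must keep rigorously separate the two sources of $\omega$-dependence, namely the variation of the \emph{section} $\alpha$ (already absorbed into $\diff_\varphi$) and the variation of the \emph{connection form} through $\sharp_\omega$, and one must apply the normal-coordinate simplification only to the genuinely tensorial combination, since the individual variation terms $\diff_\psi V_\varphi$ involve second derivatives that survive at $x_0$. The one structural input that must be used and not lost is the integrability $\bdiff\alpha=0$, which is precisely what kills the term with two derivatives on $\alpha$; everything else is forced by the affine structure of the class and the commuting of mixed partials.
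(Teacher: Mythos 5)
Your proof is correct, but it follows a genuinely different route from the paper's. The paper also reduces to constant (hence commuting) vector fields $\varphi,\psi$ on $\Omega$, but it never isolates a connection form: it expands $D_\psi D_\varphi\alpha$ directly and groups the terms so that each group is manifestly symmetric under $\varphi\leftrightarrow\psi$; the decisive step is that the leftover terms assemble, via the Leibniz rule, into $\bdiff\left[g_0\left(\alpha(\diff\varphi),\diff\psi\right)\right]$, whose symmetry in $\varphi,\psi$ is exactly the \emph{compatibility} of $\alpha$ with $\omega_0$, i.e.\ the symmetry of $g_0(\alpha\cdot,\cdot)$. Integrability is not invoked explicitly there; it enters only through Lemma~\ref{lemma:horiz_distribution}, which is what guarantees that $D_\varphi\alpha$ is again a section of $\m{E}$ so that the iterated derivative and the contraction conventions make sense. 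Your decomposition $D_\varphi=\diff_\varphi-A_\varphi$ pushes the computation the opposite way: compatibility is never used, the integrability $\bdiff\alpha=0$ kills the terms where the derivative lands on $\alpha$, and everything collapses onto the cocycle identity $\diff_\psi V_\varphi-\diff_\varphi V_\psi+[V_\varphi,V_\psi]=0$, checked pointwise in normal coordinates — your verification of that identity, and the tensoriality argument justifying the normal-coordinate reduction, are sound. (One phrasing slip: in the commutator term, ``both $\bdiff$-derivatives fall on $\alpha$'' should read ``the inner derivative falls on $\alpha$''; the displayed antisymmetrized expression, which is what vanishes by $\bdiff\alpha=0$, is the correct one.) What your route buys: it shows the formula~\eqref{eq:connection_bundle} is flat as a connection on the larger trivial bundle $\m{A}\times\Omega$ of \emph{all} integrable deformations, with no compatibility hypothesis, and it isolates the geometric mechanism — the cocycle identity for $\varphi\mapsto(\diff\varphi)^{\sharp_\omega}$ is precisely the infinitesimal consistency of the formal complexified action, in the spirit of the remark following Proposition~\ref{prop:complessificazione_cotJ}. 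What the paper's route buys: lighter machinery (no curvature form, no normal coordinates) and a transparent role for the compatibility condition that defines $\m{E}$ in the first place.
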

\begin{proof}
Fix~$\alpha\in\Gamma(\Omega,\m{E})$ and~$\omega_0\in\Omega$. Let~$\varphi$ and~$\psi$ be smooth functions on~$M$, and notice that they define constant vector fields on~$\Omega$. To show that the curvature of~$D$ vanishes, it is then sufficient to check that~$D_\varphi D_\psi\alpha$ is symmetric in~$\varphi$ and~$\psi$.

As~$D_\varphi\alpha(\omega)=\varphi_\omega(\alpha)-\bdiff\left(\alpha_\omega(\diff\varphi)^{\sharp_\omega}\right)$, we get
\begin{equation}\label{eq:second_cov}
\begin{split}
D_\psi D_\varphi\alpha(\omega_0)=&\psi_{\omega_0}\left[\varphi(\alpha)-\bdiff\left(\alpha(\diff\varphi)^\sharp\right)\right]-\bdiff\left[\varphi_{\omega_0}(\alpha)-\left(\bdiff\left(\alpha_{\omega_0}(\diff\varphi)^{\sharp_0}\right)(\diff\psi)\right)^{\sharp_0}\right]=\\
=&\psi_{\omega_0}\left(\varphi(\alpha)\right)-\bdiff\Big[\Big(
\left(\psi_{\omega_0}\alpha\right)(\diff\varphi)-\left(\alpha_{\omega_0}(\diff\varphi)^{\sharp_0}\right)\lrcorner\diff\bdiff\psi+\\
&\hspace{3cm}+
\left(\varphi_{\omega_0}\alpha\right)(\diff\psi)
-\bdiff\left(\alpha_{\omega_0}(\diff\varphi)^{\sharp_0}\right)(\diff\psi)
\Big)^{\sharp_0}\Big].
\end{split}
\end{equation}
It is clear that the terms~$\psi_{\omega_0}\left(\varphi(\alpha)\right)$ and~$\left(\psi_{\omega_0}\alpha\right)(\diff\varphi)+\left(\varphi_{\omega_0}\alpha\right)(\diff\psi)$ in \eqref{eq:second_cov} are symmetric in~$\varphi$ and~$\psi$. The remaining terms can be rewritten as
\begin{equation}
\left(\alpha_{\omega_0}(\diff\varphi)^{\sharp_0}\right)\lrcorner\diff\bdiff\psi+\bdiff\left(\alpha_{\omega_0}(\diff\varphi)^{\sharp_0}\right)(\diff\psi)=\bdiff\left[g_0\left(\alpha_{\omega_0}(\diff\varphi),\diff\psi\right)\right]
\end{equation}
which is symmetric in~$\varphi$ and~$\psi$, as~$\alpha$ is compatible with~$\omega_0$.
\end{proof}
The most important consequence of Proposition~\ref{prop:flat} is that the holonomy of~$D$ is trivial, as the K\"ahler class is simply connected. The horizontal transport of~$\alpha$ along a path in~$\Omega$ depends only on the endpoints of the path.
\begin{cor}\label{cor:flat}
For every closed curve~$\gamma_t$ in~$\Omega$, any horizontal lift of~$\gamma_t$ to~$\m{E}$ is again a closed curve.
\end{cor}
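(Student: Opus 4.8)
The plan is to derive the statement from the flatness of $D$ (Proposition~\ref{prop:flat}) together with the fact that the K\"ahler class $\Omega$ is convex, hence simply connected. First I would make the convexity explicit: if $\omega_0,\omega_1\in\Omega$ then the segment $(1-t)\omega_0+t\omega_1$ is closed, positive, and cohomologous to $\omega_0$, so it lies in $\Omega$; through the $\diff\bdiff$-Lemma $\Omega$ is thus an open convex subset of $\m{C}^\infty(M)/\bb{R}$. Given a loop $\gamma_t$ based at $\omega_0=\gamma_0=\gamma_1$, this lets me write down a nullhomotopy rel endpoints by the straight-line homotopy $H(t,s)=(1-s)\gamma_t+s\,\omega_0$, which maps $[0,1]^2$ into $\Omega$, equals $\gamma_t$ at $s=0$, is the constant loop $\omega_0$ at $s=1$, and satisfies $H(0,s)=H(1,s)=\omega_0$ for all $s$.

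The heart of the argument is the standard principle that the holonomy of a \emph{flat} connection is invariant under homotopies of the loop with fixed endpoints. Concretely, for each $s$ let $P_s$ denote the holonomy of $D$ around the loop $H(\cdot,s)$, obtained by solving the horizontal-lift equation~\eqref{eq:hor_lift} along $H(\cdot,s)$ starting from a fixed $\alpha\in\m{E}_{\omega_0}$. The derivative $\diff_s P_s$ is given by integrating the curvature $F_D$ of $D$ against $\diff_t H$ and $\diff_s H$ over the infinitesimal annulus swept out by $H$, conjugated by the relevant partial parallel transports; since $F_D=0$ by Proposition~\ref{prop:flat}, this derivative vanishes and $P_s$ is independent of $s$. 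At $s=1$ the loop $H(\cdot,1)$ is constant, so its horizontal lift is the constant path and $P_1=\mrm{id}$; hence $P_0=\mrm{id}$. As $P_0$ is precisely the holonomy around the original loop $\gamma$, every horizontal lift of $\gamma$ returns to its starting point, i.e.\ is a closed curve, which is the assertion of Corollary~\ref{cor:flat}.

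The point requiring care is that $\m{E}\to\Omega$ has infinite-dimensional fibres, so none of the holonomy formalism is automatic. The inputs needed are exactly those already in place: the method of characteristics guarantees that~\eqref{eq:hor_lift} admits unique smooth horizontal lifts, so the transport $P_s$ and its $s$-derivative are well-defined, and the vanishing of the $\bdiff$-exact terms computed in the proof of Proposition~\ref{prop:flat} (see~\eqref{eq:second_cov}) is precisely the identity $F_D=0$ used above. The only additional verification is the smooth dependence of the lifts on the homotopy parameter $s$, which follows from the smooth dependence on parameters of the characteristic system underlying~\eqref{eq:hor_lift}; no estimate beyond those already invoked for Proposition~\ref{prop:flat} is required.
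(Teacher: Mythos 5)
Your proof is correct and takes essentially the same route as the paper: the paper gives no separate argument for Corollary~\ref{cor:flat}, stating it as an immediate consequence of Proposition~\ref{prop:flat} together with the simple connectedness of the K\"ahler class, which is exactly the flat-holonomy argument you spell out (with convexity of~$\Omega$ supplying the nullhomotopy and the method of characteristics, already invoked for~\eqref{eq:hor_lift}, supplying well-posedness and smooth dependence of the lifts). There is no gap; your version simply makes explicit the standard homotopy-invariance-of-holonomy details that the paper leaves implicit.
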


\subsection{Formal complexification of the action}\label{sec:complexification}

In this section, we denote by~$\omega_0$ the background symplectic form on~$M$, and we study the action of the group~$\m{G}$ of exact symplectomorphisms of~$(M,\omega_0)$ on the cotangent space of complex structures compatible with~$\omega_0$. The infinitesimal action on~$\cotJ=\cotJ(\omega_0)$ of a function ~$h\in\mrm{Lie}(\m{G})\cong\m{C}^{\infty}(M,\bb{R})$ on~$\cotJ$ is
\begin{equation}
\hat{h}_{J,\alpha}=\left(\m{L}_{X_h}J,\m{L}_{X_h}\alpha\right)
\end{equation}
and since~$\cotJ$ has a complex structure (see~\eqref{eq:app_AC+complexstructure}), we can infinitesimally complexify the action of~$\m{G}$ by setting, for~$h\in\mrm{Lie}(\m{G})^{c}=\m{C}^\infty(M,\bb{C})$,
\begin{equation}
\widehat{h}_{J,\alpha}\coloneqq \widehat{\Re\,(h)}_{J,\alpha}+\bm{I}_{J,\alpha}\widehat{\mrm{Im}(h)}_{J,\alpha}.
\end{equation}
Consider the distribution on~$\cotJ$
\begin{equation}\label{eq:distribuzione}
\begin{split}
\mscr{D}_{J,\alpha}=&\set*{\hat{h}_{J,\alpha}\tc h\in\m{C}^\infty(M,\bb{R})}\oplus\set*{\widehat{\I h}_{J,\alpha}\tc h\in\m{C}^\infty(M,\bb{R})}=\\
=&\set*{\left(\m{L}_{X^{\omega_0}_h}J,\m{L}_{X^{\omega_0}_h}\alpha\right)\tc h\in\m{C}^\infty(M,\bb{R})}\oplus\\
&\oplus\set*{\left(J\m{L}_{X^{\omega_0}_h}J,(\m{L}_{X^{\omega_0}_h}\alpha)J^\transpose+(\m{L}_{X^{\omega_0}_h}J^\transpose)\alpha\right)\tc h\in\m{C}^\infty(M,\bb{R})}.
\end{split}
\end{equation}
We would like to prove that this distribution is integrable, so that the integral leaves of~$\mscr{D}_{J,\alpha}$ can be considered as complexified orbits of the action~$\m{G}\curvearrowright\cotJ$.

We fix an element~$(J,\alpha)$ of~$\cotJ$ such that~$J$ is an integrable complex structure and~$\alpha$ defines an integrable first-order deformation of~$J$. We will show that the K\"ahler class of~$\omega_0$ can be used to parametrize the complex orbit of~$(J,\alpha)$. Denote by~$\Omega$ the K\"ahler class, and consider the set
\begin{equation}
\m{K}(\omega_0)=\set*{(\omega,\psi)\in\Omega\times\mrm{Diff}(M)\tc\psi^*\omega=\omega_0},
\end{equation}
together with the projection on~$\Omega$, which makes~$\m{K}(\omega_0)$ into a~$\mrm{Sympl}(M,\omega_0)$-principal bundle over~$\Omega$. Assume that~$H^1(M)=0$, so that~$\m{K}(\omega_0)$ is a~$\m{G}$-bundle; this is not strictly necessary for our result, as the arguments can be generalized as in~\cite{Donaldson_SymmKahlerHam} at least in the case when~$\Omega=\mrm{c}_1(L)$ for an ample line bundle~$L\to M$.
\begin{prop}\label{prop:complessificazione_cotJ}
Extend~$\alpha$ to a horizontal section of~$\m{E}\to\Omega$. The map~$\Phi_{J,\alpha}:\m{K}(\omega_0)\to\cotJ$ defined by
\begin{equation}
(\omega,\psi)\mapsto\left(\psi^*J,\psi^*\left(\alpha(\omega)\right)\right)
\end{equation}
describes an integral leaf of the distribution~\eqref{eq:distribuzione}.
\end{prop}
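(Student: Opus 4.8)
The plan is to show that $\Phi_{J,\alpha}$ is well-defined into $\cotJ(\omega_0)$, that its differential has image exactly $\mscr{D}$ at every point, and that $\m{K}(\omega_0)$ is connected; together these identify the image as the integral leaf of $\mscr{D}$ through $(J,\alpha)$. Well-definedness is the easy part: since $\psi^*\omega=\omega_0$, the map $\psi$ is a symplectomorphism $(M,\omega_0)\to(M,\omega)$, so it carries the $\omega$-compatible datum $(J,\alpha(\omega))$ to an $\omega_0$-compatible one, and naturality of $\bdiff$ and of the Nijenhuis tensor under pullback gives that $\psi^*J$ is integrable and $\bdiff_{\psi^*J}\psi^*\alpha(\omega)=0$. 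Write $\tilde{J}\coloneqq\psi^*J$ and $\tilde{\alpha}\coloneqq\psi^*\alpha(\omega)$ for the image point.

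Next I would use the principal bundle structure to split $T_{(\omega,\psi)}\m{K}(\omega_0)$ into the vertical $\m{G}$-directions and a complement given by Moser lifts of paths in $\Omega$. On a vertical direction, take $\psi_t=\psi\circ\phi^h_t$ with $\phi^h_t$ the flow of $X^{\omega_0}_h$; then $\Phi(\omega,\psi_t)=(\phi^{h*}_t\tilde{J},\phi^{h*}_t\tilde{\alpha})$, and differentiating gives $(\m{L}_{X^{\omega_0}_h}\tilde{J},\m{L}_{X^{\omega_0}_h}\tilde{\alpha})=\hat{h}_{\tilde{J},\tilde{\alpha}}$, which is exactly the first summand of $\mscr{D}_{\tilde{J},\tilde{\alpha}}$. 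As $h$ ranges over $\m{C}^\infty(M,\bb{R})$ this fills that summand.

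The heart of the argument is the horizontal direction. Fix $\dot h$, let $\omega_t=\omega+\I\diff\bdiff(t\dot h)$, and lift by the Moser flow $\psi_t^*\omega_t=\omega_0$ with the gradient velocity field $Y_0=JX^\omega_{\dot h}$ (up to normalization). Differentiating, and using that $\alpha$ is a $D$-horizontal section so that $\diff_{t=0}\alpha(\omega_t)=\bdiff\bigl(\alpha(\omega)(\diff\dot h)^{\sharp_\omega}\bigr)$ by \eqref{eq:hor_lift}, I get
\[
\diff_{t=0}\Phi=\psi^*\Bigl(\m{L}_{Y_0}J,\ \m{L}_{Y_0}\alpha(\omega)+\bdiff\bigl(\alpha(\omega)(\diff\dot h)^{\sharp_\omega}\bigr)\Bigr).
\]
For the $J$-component this reduces, after removing $\psi^*$ and writing $V=X^\omega_{\dot h}$, to the identity $\m{L}_{JV}J=J\m{L}_{V}J$, which holds for \emph{any} $V$ precisely because $J$ is integrable (it is the vanishing of the Nijenhuis tensor, rearranged). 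Pulling back, and using $\psi^*X^\omega_{\dot h}=X^{\omega_0}_{\dot h\circ\psi}$, this produces $\tilde{J}\,\m{L}_{X^{\omega_0}_{\dot h\circ\psi}}\tilde{J}$, the $J$-component of the second summand; as $\dot h$ varies so does $\dot h\circ\psi$, so the whole summand is reached.

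The main obstacle is the $\alpha$-component: one must prove the identity on $(M,\omega,J)$
\[
\m{L}_{JV}\alpha(\omega)+\bdiff\bigl(\alpha(\omega)(\diff\dot h)^{\sharp_\omega}\bigr)=\bigl(\m{L}_{V}\alpha(\omega)\bigr)J^\transpose+\bigl(\m{L}_{V}J^\transpose\bigr)\alpha(\omega),
\]
whose right-hand side is the $\alpha$-component of $\bm{I}_{J,\alpha}\hat{(\dot h)}$. This is the genuinely new computation, and it is the reason the connection $D$ in \eqref{eq:connection_bundle} carries the correction term $\bdiff(\alpha(\diff\varphi)^{\sharp_{g_J}})$: that term is exactly what reconciles the Moser transport of $\alpha$ with the complexified infinitesimal action. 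I expect it to follow by the same mechanism as the $J$-identity—expanding both sides and using integrability of $J$, the integrability $\bdiff_J\alpha=0$, the compatibility of $\alpha$ with $\omega$, and the K\"ahler condition—but it is delicate enough that I would isolate it as a separate lemma. Finally, since $\Omega$ is convex and $\m{G}$ is connected, $\m{K}(\omega_0)$ is connected; the differential $\diff\Phi$ then surjects onto $\mscr{D}_{\tilde{J},\tilde{\alpha}}$ at every point (first summand from the vertical directions, second from the horizontal ones), so the image is a connected integral manifold of $\mscr{D}$ with full tangent space, i.e.\ the integral leaf through $(J,\alpha)$.
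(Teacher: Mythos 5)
Your architecture is the same as the paper's: the fibre directions of $\m{K}(\omega_0)$ sweep out the real summand of $\mscr{D}$, and the Moser lifts of paths in $\Omega$, combined with $D$-horizontality of $\alpha$, should sweep out the complex summand. But the identity you display for the $\alpha$-component is precisely where the content of the proposition lives, and you do not prove it --- you only say you ``expect it to follow by the same mechanism'' and would isolate it as a separate lemma. As it stands, that is a genuine gap at the central step. The paper closes it in two moves. First, a pointwise identity valid for \emph{any} vector field $X$ and any \emph{integrable} first-order deformation $\beta$,
\begin{equation}
\m{L}_{JX}\beta+\m{L}_{\beta(X)}J^\transpose=(\m{L}_X\beta)J^\transpose+(\m{L}_XJ^\transpose)\beta,
\end{equation}
which isolates the discrepancy between $\m{L}_{JX}\alpha$ and the complexified infinitesimal action as the single extra term $\m{L}_{\alpha(X)}J^\transpose$. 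Second, the observation that for $X=X^\omega_{h}$ one has $\tfrac12\m{L}_{\alpha(X)}J^\transpose=\I\bdiff\left(\alpha(X)\right)^{1,0}=\bdiff\left(\alpha(\nabla^{0,1}h)\right)=\bdiff\left(\alpha(\diff h)^{\sharp_\omega}\right)$, the last equality using compatibility of $\alpha$ with $\omega$; this is exactly the horizontality correction of $D$, so the two terms cancel. Your intuition that the correction term in \eqref{eq:connection_bundle} is ``what reconciles Moser transport with the complexified action'' is right, but the reconciliation is this cancellation, and it has to be exhibited.

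A second, concrete problem: the identity as you wrote it is false, because of the normalization you waved away. The Moser field for the path $\omega_t=\omega+\I\diff\bdiff(t\dot h)$ is $Y_t=\tfrac12 JX^{\omega_t}_{\dot h}$, not $JX^{\omega_t}_{\dot h}$ (otherwise $\psi_t^*\omega_t\neq\omega_0$ and your curve does not lie in the image of $\Phi_{J,\alpha}$), while the horizontality equation \eqref{eq:hor_lift} along that same path gives $\diff_{t=0}\alpha(\omega_t)=\bdiff\bigl(\alpha(\diff\dot h)^{\sharp_\omega}\bigr)$ with coefficient $1$. The correct statement, with $V=X^\omega_{\dot h}$, is therefore
\begin{equation}
\tfrac12\m{L}_{JV}\alpha+\bdiff\bigl(\alpha(\diff\dot h)^{\sharp_\omega}\bigr)=\tfrac12\bigl[(\m{L}_{V}\alpha)J^\transpose+(\m{L}_{V}J^\transpose)\alpha\bigr],
\end{equation}
i.e.\ your version has the $\bdiff$-correction with twice its correct relative weight; a brute-force expansion of your lemma would not close. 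The overall factor $\tfrac12$ is harmless for membership in $\mscr{D}$ (replace $\dot h$ by $\dot h/2$), but the \emph{relative} factor between the Lie-derivative term and the correction is not. The remaining pieces of your argument --- well-definedness, the vertical directions, and surjectivity onto the whole distribution (which the paper phrases via $H^1(M)=0$, decomposing any $\psi$ with $\psi^*\omega=\omega_0$ as a Moser isotopy followed by a Hamiltonian flow) --- are in line with the paper.
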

Proposition~\ref{prop:complessificazione_cotJ} is inspired by the formal complexification of the Hamiltonian action on~$\m{J}$ described in~\cite[p.$17$]{Donaldson_SymmKahlerHam}. The main difference is that, while~$J$ is compatible with any~$\omega\in[\omega_0]$, the first-order deformation~$\alpha$ is not necessarily compatible with K\"ahler forms other than~$\omega$. This is the reason for extending~$\alpha$ to a horizontal section of~$\m{E}$.

\begin{proof}[Proof of Proposition~\ref{prop:complessificazione_cotJ}]
The compatibility between~$\omega$ and~$\alpha(\omega)$ guarantees that the image of~$\Phi_{J,\alpha}$ is contained in~$\cotJ$. We first prove that the image of~$\Phi_{J,\alpha}$ contains an integral leaf of the ``complex part'' of the distribution, namely
\begin{equation}
\set*{\left(J\m{L}_{X^{\omega_0}_h}J,(\m{L}_{X^{\omega_0}_h}\alpha)J^\transpose+(\m{L}_{X^{\omega_0}_h}J^\transpose)\alpha\right)\tc h\in\m{C}^\infty(M,\bb{R})}.
\end{equation}
For a K\"ahler potential~$h$, consider~$\omega_t=\omega_0+\I\diff\bdiff\,th$ and the vector field~$Y_t=\frac{1}{2}JX^{\omega_t}_h$, where~$X^{\omega_t}_h$ is the Hamiltonian vector field associated to~$h$ with respect to~$\omega_t$. If~$\psi_t$ is the isotopy of the time-dependent vector field~$Y_t$, then~$(\omega_t,\psi_t)\in\m{K}(\omega)$ and~$\diff_t\psi_t^*J$ lies in the ``complex part'' of the distribution~$\mscr{D}$ on~$\m{J}$. We claim that, for~$\alpha_t\coloneqq \alpha(\omega_t)$,
\begin{equation}
t\mapsto\left(\psi_t^*J,\psi_t^*\alpha_t\right)
\end{equation}
is a curve tangent to the complex part of the distribution~$\mscr{D}$. This condition can be rephrased as
\begin{equation}\label{eq:condizioni_complessificazione}
\diff_t\psi_t^*\alpha_t=\frac{1}{2}\psi_t^*\left((\m{L}_{X^{\omega_t}_h}\alpha_t)J^\transpose+(\m{L}_{X^{\omega_t}_h}J^\transpose)\alpha_t\right).
\end{equation}
The derivative of~$\psi_t^*\alpha_t$ can be computed from~\cite[Proposition~$6.4$]{DaSilva_lectures_symp}
\begin{equation}
\diff_t\psi_t^*\alpha_t=\psi_t^*\left(\frac{1}{2}\m{L}_{JX^{\omega_t}_h}\alpha_t+\alpha'_t\right).
\end{equation}
Notice now that, for any vector field~$X$ and any integrable first-order deformation~$\beta$ we have
\begin{equation}
\m{L}_{JX}\beta+\m{L}_{\beta(X)}J^\transpose=(\m{L}_X\beta)J^\transpose+(\m{L}_XJ^\transpose)\beta.
\end{equation}
Then, we can rewrite~$\diff_t\psi^*_t\alpha_t$ as
\begin{equation}
\begin{split}
\diff_t\psi_t^*\alpha_t=\psi_t^*\left(\frac{1}{2}\left(\m{L}_{X^{\omega_t}_h}\alpha_t\right)J^\transpose+\frac{1}{2}\left(\m{L}_{X^{\omega_t}_h}J^\transpose\right)\alpha_t-\frac{1}{2}\m{L}_{\alpha_t(X^{\omega_t}_h)}J^\transpose
+\alpha'_t\right).
\end{split}
\end{equation}
As~$\alpha_t$ is the horizontal lift to~$\m{E}$ of~$\omega_t$
\begin{equation}
\alpha'_t-\frac{1}{2}\m{L}_{\alpha_t(X^{\omega_t}_h)}J^\transpose=\alpha'_t-\I\bdiff\left(\alpha_t(X^{\omega_t}_h)\right)^{1,0}=\alpha'_t-\bdiff\left(\alpha_t(\nabla^{0,1}_th)\right)=0
\end{equation}
using the compatibility of~$\alpha_t$ with~$\omega_t$. Summing up, we have seen that
\begin{equation}
\diff_t\psi_t^*\alpha_t=\frac{1}{2}\psi_t^*\left(\left(\m{L}_{X^{\omega_t}_h}\alpha_t\right)J^\transpose+\left(\m{L}_{X^{\omega_t}_h}J^\transpose\right)\alpha_t\right)
\end{equation}
that is precisely~\eqref{eq:condizioni_complessificazione}.

To prove that the image of~$\Phi_{J,\alpha}$ integrates the whole distribution, it is sufficient to notice that, if~$(\omega,\psi)\in\m{K}(\omega)$ for~$\omega=\omega_0+\I\diff\bdiff h$, then~$\psi$ is the composition of the isotopy of~$Y=\frac{1}{2}JX^{\omega}_h$ with a symplectomorphism of~$\omega$. The assumption~$H^1(M)=0$ implies that this symplectomorphism is an element of~$\m{G}$, so that~$\psi$ will be the composition of the isotopy of~$Y$ with the flow of a Hamiltonian vector field.
\end{proof}
\begin{rmk}
Proposition~\ref{prop:complessificazione_cotJ} explains why we should expect the connection~$D$ of~\eqref{eq:connection_bundle} to be flat, and in particular that~$D$-horizontal lifts only depend on the endpoints of the path, as in Corollary~\ref{cor:flat}. The~$D$-horizontal lifts of paths in~$\Omega$ describe the orbits of the (non-existent) complexification of~$\m{G}$, and the flatness of~$D$ accounts for the fact that these orbits should describe a group action: for any~$(J,\alpha)\in\cotJ$ and any two elements~$\psi_1$,~$\psi_2$ of the mythical group~$\m{G}^{\bb{C}}$, the iterated action~$\psi_1.\left(\psi_2.(J,\alpha)\right)$ should depend only on the composition~$\psi_1\circ\psi_2$, rather than~$\psi_1$ and~$\psi_2$.
\end{rmk}

\subsection{Formal complexification of the equations}\label{sec:complexification_eqs}

Following the classical case of the cscK equation, the formal complexification of the action~$\m{G}\curvearrowright\cotJ$ makes it natural to regard system~\eqref{eq:system_mm_implicit}, i.e.
\begin{equation}
\begin{cases}
\f{m}_{\bm{\Theta}}(J,\alpha)=0\\
\f{m}_{\bm{\Omega}}(J,\alpha)=0
\end{cases}
\end{equation}
as equations for a form~$\omega$, to be found in some prescribed set, keeping instead the complex structure~$J$ fixed.

From the discussion of the cscK problem in~\cite{Donaldson_scalar} we know that to achieve this we should consider the moment maps for~$(J,\alpha)$ in a ``formally complexified orbit'' of the action, i.e. in the image of one of the maps defined in Proposition~\ref{prop:complessificazione_cotJ}. So, fix a reference K\"ahler form~$\omega_0\in\Omega$, and assume that~$(J,\alpha)\in T^*\m{J}(\omega_0)$. We extend~$\alpha$ to a horizontal section of~$\m{E}\to\Omega$; for~$(\omega,\psi)\in\m{K}(\omega_0)$, we can write the \emph{complexified} moment map equations for~$\f{m}_{\bm{\Omega}}$ and~$\f{m}_{\bm{\Theta}}$, as
\begin{equation}\label{eq:complessificate_1}
\begin{cases}
\f{m}_{\bm{\Theta}}\left(\psi^*J,\psi^*\left(\alpha(\omega)\right)\right)=0;\\
\f{m}_{\bm{\Omega}}\left(\psi^*J,\psi^*\left(\alpha(\omega)\right)\right)=0.
\end{cases}
\end{equation}
Focusing on the~$\bm{\Omega}$-equation, recall that the moment map is computed using the metric~$g(J,\omega_0)$ defined from the background symplectic form and the complex structure. Making the dependence on the symplectic form explicit we find
\begin{equation}
\begin{gathered}
\f{m}_{\bm{\Omega}}\left(\psi^*J,\psi^*\left(\alpha(\omega)\right)\right)=
\f{m}_{\bm{\Omega}}\left(g\left(\psi^*J,\omega_0\right),\psi^*\alpha(\omega)\right)=\\
=\psi^*\left(\f{m}_{\bm{\Omega}}\left(g\left(J,\psi^{-1}{}^*\omega_0\right),\alpha(\omega)\right)\right)=\psi^*\left(\f{m}_{\bm{\Omega}}\left(g\left(J,\omega\right),\alpha(\omega)\right)\right).
\end{gathered}
\end{equation}
So, the equation~$\f{m}_{\bm{\Omega}}\left(\psi^*J,\psi^*\alpha(\omega)\right)=0$ is equivalent to
\begin{equation}
\f{m}_{\bm{\Omega}}\left(g\left(J,\omega\right),\alpha(\omega)\right)=0
\end{equation}
hence looking for a solution to the moment map equations along the complexified orbit of~$(J,\alpha)$ is equivalent to keeping~$J$ fixed, moving~$\omega_0$ in its K\"ahler class to some~$\omega\in[\omega_0]$ and simultaneously moving~$\alpha$ along a horizontal lift of the path~$\omega_0\to\omega$. This horizontal lift is independent of the chosen path, by Corollary~\ref{cor:flat}.

The same considerations can be made also for~$\f{m}_{\bm{\Theta}}$, so the complexification of system~\eqref{eq:system_mm_implicit} is described as follows: for a K\"ahler class~$\Omega$ and an infinitesimal deformation class~$\Xi\in H^1(T^{1,0}M)$, fix a reference K\"ahler form~$\omega_0\in\Omega$ and an infinitesimal deformation~$\alpha_0\in\Xi$ that is compatible with~$\omega_0$. Extend~$\alpha_0$ to a horizontal section~$\alpha$ of~$\m{E}\to\Omega$, and look for~$\omega\in\Omega$ such that~$\omega$ and~$\alpha=\alpha(\omega)$ satisfy
\begin{equation}\label{eq:complessificate_definitive}
\begin{dcases}
\m{D}_\omega^*\alpha(\omega)=0,&\\
s(\omega)-\hat{s}+\mrm{div}\,\Re\left(\I Jg_J(\nabla^\sharp\bar{\alpha},\alpha f'(\bar{\alpha}\alpha))+2\nabla^*(\bar{\alpha}\alpha f'(\bar{\alpha}\alpha))\right)=0.
\end{dcases}
\end{equation}
From now on, we will call the horizontal sections of~$\m{E}\to\Omega$ \emph{Higgs fields}, somewhat improperly. By Corollary~\ref{cor:flat}, the set of Higgs fields can be identified with any fibre of~$\m{E}$.

\section{Features of the complexified equations}

\subsection{A variational characterization}\label{sec:energy}

In this section, we focus on the real equation of the complexified system~\eqref{eq:complessificate_intro}. Our goal is to find a variational characterization of the problem, taking advantage of the moment map description. The approach is inspired by analogous considerations in~\cite{GarciaFernandez_PHD}.

For a Higgs field~$\alpha$, consider the~$1$-form on~$\Omega$
\begin{equation}
\left(\sigma_{\alpha}\right)_\omega(\varphi)=-\left\langle\varphi,\f{m}_{\bm{\Omega}}(\omega,J,\alpha(\omega))\right\rangle_{L^2(\omega)}.
\end{equation}
The minus sign in the definition of~$\sigma$ is chosen so that the corresponding energy functional (i.e. a primitive of~$\sigma$) will be convex along the complex directions of the orbit. In a finite-dimensional setting, this is quite easy to check - most proofs of the Kempf-Ness Theorem start from this fact (see for example~\cite[\S$5.4$]{Szekelyhidi_libro}). In our infinite-dimensional situation, where a complexification of the group does not exists, the question of convexity is less clear. See however Section~\ref{sec:toric} for a discussion of the toric case, in which we are able to show that the energy functional is convex along (toric) geodesics.

\begin{prop}\label{prop:sigma_exact}
The form~$\sigma_{\alpha}$ is exact.
\end{prop}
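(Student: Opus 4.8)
The plan is to deduce exactness from closedness. Via the $\diff\bdiff$-Lemma the class $\Omega$ is an open convex subset of $\m{C}^\infty(M,\bb{R})/\bb{R}$, hence contractible, so by the Poincaré lemma any closed $1$-form on $\Omega$ admits a primitive; concretely $E(\omega)=\int_0^1(\sigma_\alpha)_{\omega_t}(\dot\omega_t)\,\mrm{d}t$ along the affine path of potentials from $\omega_0$ to $\omega$ is then path-independent. It therefore suffices to prove $\mrm{d}\sigma_\alpha=0$. Extending any two tangent vectors $\varphi,\psi\in\m{C}^\infty(M,\bb{R})$ to constant vector fields on the affine space $\Omega$, their bracket vanishes, and closedness reduces to the symmetry $\diff_\varphi\big((\sigma_\alpha)(\psi)\big)=\diff_\psi\big((\sigma_\alpha)(\varphi)\big)$ of the mixed second variation of a primitive.

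This symmetry is the infinite-dimensional Kempf--Ness computation, and it is cleanest to run it upstairs on $\m{K}(\omega_0)$, where the Kähler form $\bm{\Omega}$ on $\cotJ(\omega_0)$ and the pairing are genuinely fixed. Pulling back by the map $\Phi_{J,\alpha}$ of Proposition~\ref{prop:complessificazione_cotJ}, the quantity $\langle\f{m}_{\bm{\Omega}},\psi\rangle$ becomes the Hamiltonian of the fundamental field $\hat\psi$, so the moment map identity reads $\mrm{d}\langle\f{m}_{\bm{\Omega}},\psi\rangle=\iota_{\hat\psi}\bm{\Omega}$. By Proposition~\ref{prop:complessificazione_cotJ} the variation of $\omega\in\Omega$ in the direction $\varphi$ corresponds, along the orbit, to the complex fundamental field $\bm{I}\hat\varphi$ (up to the harmless factor coming from $Y=\tfrac12 JX^\omega_\varphi$). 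Hence
\[
\diff_\varphi\big((\sigma_\alpha)(\psi)\big)=-\,\mrm{d}\langle\f{m}_{\bm{\Omega}},\psi\rangle(\bm{I}\hat\varphi)=-\,\bm{\Omega}(\hat\psi,\bm{I}\hat\varphi)=-\,g_{\bm{\Omega}}(\hat\psi,\hat\varphi),
\]
where $g_{\bm{\Omega}}(\cdot,\cdot)=\bm{\Omega}(\cdot,\bm{I}\cdot)$ is the metric of the Kähler structure. Since $g_{\bm{\Omega}}$ is symmetric, this is symmetric in $\varphi$ and $\psi$, which is exactly $\mrm{d}\sigma_\alpha=0$.

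The point that needs genuine care, and is the main obstacle, is the bookkeeping hidden in the passage between the intrinsic $1$-form on $\Omega$ — defined with the $\omega$-dependent pairing $\langle\,\cdot\,,\,\cdot\,\rangle_{L^2(\omega)}$ and the $\omega$-dependent fields $X^\omega_\varphi$ — and the fixed moment-map picture on $\m{K}(\omega_0)$. Differentiating $(\sigma_\alpha)(\psi)$ along $\varphi$ produces, besides the term above, contributions from the variation of the volume form $\omega^n/n!$, from the $\omega$-dependence of the Hamiltonian fields entering~\eqref{eq:mm_real_definition}, and from the horizontal variation $\alpha'(\omega)=\bdiff\big(\alpha(\diff\varphi)^{\sharp_\omega}\big)$ of the Higgs field. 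The assertion is that all of these are precisely what it means for $\Phi_{J,\alpha}$ to parametrize an integral leaf tangent to the complex directions $\bm{I}\hat\varphi$, so that after pulling back to $\m{K}(\omega_0)$ they are absorbed into the single invariant quantity $g_{\bm{\Omega}}(\hat\psi,\hat\varphi)$; checking this amounts to combining the moment-map property~\eqref{eq:mm_real_definition} with the horizontality equation~\eqref{eq:connection_bundle} and an integration by parts, exactly as in the computation of Proposition~\ref{prop:complessificazione_cotJ}. Independence of the primitive from the path chosen, and hence global exactness on all of $\Omega$, then follows from $\mrm{d}\sigma_\alpha=0$ together with the contractibility of $\Omega$, consistently with the triviality of holonomy recorded in Corollary~\ref{cor:flat}.
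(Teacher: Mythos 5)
Your overall strategy coincides with the paper's: reduce exactness to closedness on the contractible space $\Omega$, then prove symmetry of the mixed derivative by transporting the computation to the complexified orbit, invoking the moment map property and the symmetry of the K\"ahler metric, with horizontality of $\alpha(\omega_t)$ identifying the variation with $\bm{I}\hat\varphi$. However, your central displayed identity is false, and the error sits precisely in the step you defer as ``bookkeeping''. The chain
\begin{equation}
\diff_\varphi\big((\sigma_\alpha)(\psi)\big)=-\,\mrm{d}\langle\f{m}_{\bm{\Omega}},\psi\rangle(\bm{I}\hat\varphi)=-\,\bm{G}(\hat\psi,\hat\varphi)
\end{equation}
cannot hold in general: the moment map identity $\mrm{d}\langle\f{m}_{\bm{\Omega}},\psi\rangle=\iota_{\hat\psi}\bm{\Omega}$ is valid only when $\psi$ is a \emph{fixed} element of $\mrm{Lie}(\m{G})$, whereas after transporting $\sigma_\alpha$ to the orbit the pairing function becomes $\psi\circ f_t$, with $f_t$ the isotopy of $\tfrac12 JX^{\omega_t}_\varphi$, and this moves with $t$. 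Differentiating it produces the additional term
\begin{equation}
\Big\langle \diff_t\big|_{t=0}(\psi\circ f_t),\,\f{m}_{\bm{\Omega}}(\omega,J,\alpha(\omega))\Big\rangle_{L^2(\omega)}
=-\tfrac12\Big\langle g_\omega(\mrm{d}\varphi,\mrm{d}\psi),\,\f{m}_{\bm{\Omega}}(\omega,J,\alpha(\omega))\Big\rangle_{L^2(\omega)},
\end{equation}
which is proportional to the \emph{value} of the moment map and therefore does not vanish, nor is it absorbed into $\bm{G}(\hat\psi,\hat\varphi)$, away from solutions. So the assertion in your last paragraph --- that all correction terms are absorbed into the single metric quantity --- is wrong; were it true, the primitive of $\sigma_\alpha$ would have Hessian $-\bm{G}(\hat\cdot\,,\hat\cdot)$ at \emph{every} $\omega$, a Kempf--Ness-type convexity statement much stronger than what holds, and incompatible with the paper's linearization formula \eqref{eq:linearizz_implicit}, where exactly this extra term appears as $\m{L}_{-\frac12\nabla\varphi}\f{m}_{\bm{\Omega}}$ and vanishes only at a solution.

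The gap is easy to close, and closing it is precisely what the paper's proof does: keep the extra term and observe that it is \emph{separately} symmetric in $\varphi$ and $\psi$, since $\diff_t|_{t=0}(\psi\circ f_t)=\tfrac12 JX^{\omega}_\varphi(\psi)=-\tfrac12 g_\omega(\mrm{d}\varphi,\mrm{d}\psi)$ is a symmetric expression paired against the fixed function $\f{m}_{\bm{\Omega}}(\omega,J,\alpha(\omega))$. The mixed derivative is then a sum of two individually symmetric terms --- this moment-map-weighted term and $-\bm{G}(\hat\psi,\hat\varphi)$ --- hence symmetric, and closedness follows. As written, your argument establishes nothing, because the one identity you actually display is false; the correct statement requires the splitting, not an absorption.
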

\begin{proof}
As~$\Omega$ is simply connected, we need only prove that~$\sigma_{\alpha}$ is closed. So, we should check that~$\left(\mrm{d}\sigma_{\alpha}\right)_\omega\left(\varphi,\psi\right)=0$ for any~$\omega\in\Omega$ and~$\varphi,\psi\in\m{C}^\infty(M)$. As~$\varphi$ and~$\psi$ are constant vector fields, this boils down to showing that~$\varphi_\omega\left(\sigma_{\alpha}(\psi)\right)$ is symmetric in~$\varphi$ and~$\psi$.

So, consider the derivative
\begin{equation}
\varphi_\omega\left(\sigma_{\alpha}(\psi)\right)=\diff_t\Bigr|_{t=0}\left(\sigma_{\alpha}\right)_{\omega+\I\diff\bdiff t\varphi}(\psi);
\end{equation}
to compute it, we perform a change of variables. Let~$\omega_t=\omega+\I\diff\bdiff t\varphi$, and consider the vector field~$\frac{1}{2}JX^{\omega_t}_\varphi$ and its isotopy~$f_t$. Notice that~$\alpha(\omega_t)$ is the parallel transport of~$\alpha(\omega)$ along~$\omega_t$; as~$f_t^*\omega_t=\omega$,~$\omega_t=f_t^{-1}{}^*\omega$ and we can rewrite our~$1$-form as
\begin{equation}
\begin{split}
\left(\sigma_{\alpha}\right)_{\omega_t}(\psi)=&\left\langle\psi,\f{m}_{\bm{\Omega}}(f_t^{-1}{}^*\omega,J,\alpha(\omega_t))\right\rangle_{L^2(\omega_t)}=\\
=&\left\langle\psi,f_t^{-1}{}^*\f{m}_{\bm{\Omega}}(\omega,f_t^*J,f_t^*\alpha(\omega_t))\right\rangle_{L^2(\omega_t)}=\\
=&\left\langle\psi\circ f_t,\f{m}_{\bm{\Omega}}(\omega,f_t^*J,f_t^*\alpha(\omega_t))\right\rangle_{L^2(\omega)}.
\end{split}
\end{equation}
Taking the derivative of~$\left(\sigma_{\alpha}\right)_{\omega_t}(\psi)$ at~$t=0$ we get
\begin{equation}\label{eq:der_sigma}
\begin{split}
\diff_t\Bigr|_{t=0}\left(\sigma_{\alpha}\right)_{\omega_t}(\psi)=&\left\langle\diff_t\Bigr|_{t=0}\left(\psi\circ f_t\right),\f{m}_{\bm{\Omega}}(\omega,J,\alpha(\omega))\right\rangle_{L^2(\omega)}+\\
&+\left\langle\psi,\diff_t\Bigr|_{t=0}\f{m}_{\bm{\Omega}}(\omega,f_t^*J,f_t^*\alpha(\omega_t))\right\rangle_{L^2(\omega)}.
\end{split}
\end{equation}
For the first term in~\eqref{eq:der_sigma} we directly compute
\begin{equation}
\diff_t\Bigr|_{t=0}\left(\psi\circ f_t\right)=\frac{1}{2}JX^{\omega}_\varphi(\psi)=-\frac{1}{2}g_\omega(\mrm{d}\varphi,\mrm{d}\psi)
\end{equation}
and this is symmetric in~$\varphi$ and~$\psi$. As for the second term, from the definition of a moment map we have
\begin{equation}
\left\langle\psi,\diff_t\Bigr|_{t=0}\f{m}_{\bm{\Omega}}(\omega,f_t^*J,f_t^*\alpha(\omega_t))\right\rangle_{L^2(\omega)}=-\bm{\Omega}\left(\hat{\psi}_{J,\alpha(\omega)},\diff_t\Bigr|_{t=0}f_t^*\left(J,\alpha(\omega_t)\right)\right)
\end{equation}
and~$f_t^*\left(J,\alpha(\omega_t)\right)$ is tangent to the ``purely imaginary'' part of the distribution~$\mscr{D}$ (see the proof of Proposition~\ref{prop:complessificazione_cotJ}), since~$\alpha(\omega_t)$ is a horizontal path in~$\m{E}$. So,
\begin{equation}
\bm{\Omega}\left(\hat{\psi}_{J,\alpha(\omega)},\diff_t\Bigr|_{t=0}f_t^*\left(J,\alpha(\omega_t)\right)\right)=\bm{\Omega}\left(\hat{\psi}_{J,\alpha(\omega)},\bm{I}\hat{\varphi}_{J,\alpha(\omega)}\right)=\bm{G}\left(\hat{\psi}_{J,\alpha(\omega)},\hat{\varphi}_{J,\alpha(\omega)}\right)
\end{equation}
is symmetric in~$\varphi$ and~$\psi$.
\end{proof}

Knowing that~$\sigma_{\alpha}$ is exact, one would hope to find a primitive for it, i.e. a function~$\m{HK}:\Omega\to\bb{R}$ such that
\begin{equation}\label{eq:HK_def}
\mrm{d}_\omega\m{HK}(\varphi)=-\left\langle\varphi,\f{m}_{\bm{\Omega}}(\omega,J,\alpha(\omega))\right\rangle_{L^2(\omega)}.
\end{equation}
It is convenient to write~$\f{m}_{\bm{\Omega}}(\omega,J,\alpha(\omega))$ as the sum of two pieces, from~\eqref{eq:mm_real_explicit}
\begin{equation}
\begin{split}
\f{m}_{\bm{\Omega}}(\omega,&J,\alpha(\omega))=\\
=&\underbrace{s(\omega,J)-\hat{s}}_{\mu(\omega)}+\underbrace{\Re\,\mrm{div}\left(\I J\,\mrm{Tr}\left(\nabla^\sharp\bar{\alpha}\,\alpha f'(\bar{\alpha}\alpha)\right)+2\nabla^*(\bar{\alpha}\alpha f'(\bar{\alpha}\alpha))\right)}_{\f{m}(\omega,\alpha(\omega))}
\end{split}
\end{equation}
correspondingly,~$\sigma$ is decomposed as a scalar curvature term plus a divergence term:
\begin{equation}
\left(\sigma_{\alpha}\right)_\omega(\varphi)=-\left\langle\varphi,\mu(\omega)\right\rangle-\left\langle\varphi,\f{m}(\omega,\alpha(\omega))\right\rangle.
\end{equation}
It is well-known that the K-energy is a primitive for the scalar curvature term in~\eqref{eq:HK_def}. We will show that a primitive for the divergence term is the functional
\begin{equation}
\m{H}_{\alpha}(\omega)=\frac{1}{2}\int f(\bar{\alpha}(\omega)\alpha(\omega))\frac{\omega^n}{n!}
\end{equation}
where~$f$ is the spectral function defining~$\bm{\Omega}$.

\begin{thm}\label{thm:variational}
For any Higgs field~$\alpha$, the real moment map equation in system~\eqref{eq:complessificate_intro} is the Euler-Lagrange equation of
\begin{equation}
\m{HK}_{\alpha}(\omega)\coloneqq \m{M}(\omega)+\m{H}_{\alpha}(\omega)
\end{equation}
where~$\m{M}$ is the K-energy functional. 
\end{thm}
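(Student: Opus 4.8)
The plan is to show that $\m{HK}_\alpha=\m{M}+\m{H}_\alpha$ is a primitive of the $1$-form $\sigma_\alpha$, which Proposition~\ref{prop:sigma_exact} already guarantees to be exact; equivalently, to verify that $\mrm{d}_\omega\m{HK}_\alpha(\varphi)=-\left\langle\varphi,\f{m}_{\bm{\Omega}}(\omega,J,\alpha(\omega))\right\rangle_{L^2(\omega)}$ for every $\varphi\in\m{C}^\infty(M,\bb{R})$, which is precisely the assertion that the critical points of $\m{HK}_\alpha$ are the solutions of the real moment map equation. Using the decomposition $\f{m}_{\bm{\Omega}}=\mu+\f{m}$ and the classical fact that the K-energy satisfies $\mrm{d}_\omega\m{M}(\varphi)=-\left\langle\varphi,s(\omega)-\hat{s}\right\rangle_{L^2(\omega)}$, this reduces to proving that $\m{H}_\alpha$ is a primitive for the divergence term, that is $\mrm{d}_\omega\m{H}_\alpha(\varphi)=-\left\langle\varphi,\f{m}(\omega,\alpha(\omega))\right\rangle_{L^2(\omega)}$.

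To compute $\mrm{d}_\omega\m{H}_\alpha(\varphi)$ I would differentiate $\m{H}_\alpha(\omega_t)=\frac12\int_M f(\bar{\alpha}(\omega_t)\alpha(\omega_t))\frac{\omega_t^n}{n!}$ along the path $\omega_t=\omega+\I\diff\bdiff\,t\varphi$ under the integral sign. This splits into the variation of the volume form, giving $\frac12\int_M f(\bar{\alpha}\alpha)\left(\mrm{tr}_\omega\I\diff\bdiff\varphi\right)\frac{\omega^n}{n!}$, and the variation of the integrand, for which the very definition of $f'(\bar{\alpha}\alpha)$ recalled in the introduction yields $\diff_t f(\bar{\alpha}\alpha)=\mrm{Tr}\!\left(f'(\bar{\alpha}\alpha)\,\diff_t(\bar{\alpha}\alpha)\right)$. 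The endomorphism $\bar{\alpha}\alpha$ depends on $t$ through three channels: the variation of $\alpha$ along the horizontal lift, the variation of the metric in the adjoint $\bar{\alpha}$, and the variation of the metric in the contractions defining the endomorphism, the last two both proportional to $\dot{g}_{a\bar{b}}=\I\diff_{a}\diff_{\bar{b}}\varphi$.

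The decisive point is that $\alpha(\omega_t)$ is the $D$-horizontal lift of $\omega_t$, so by~\eqref{eq:hor_lift} one has $\diff_t\alpha(\omega_t)\big|_{t=0}=\bdiff\!\left(\alpha(\nabla^{0,1}\varphi)\right)$ and not zero; this is exactly where the flat transport of Section~\ref{sec:bundle_class} feeds into the variational problem. Substituting the three channels into $\diff_t(\bar{\alpha}\alpha)$ and integrating by parts so that the derivatives fall onto $\varphi$, I expect the horizontal channel, paired against $\alpha f'(\bar{\alpha}\alpha)$, to reproduce the term $\Re\,\mrm{div}\!\left(\I J g_J(\nabla^\sharp\bar{\alpha},\alpha f'(\bar{\alpha}\alpha))\right)$, while the two metric channels together with the volume term combine into $2\,\Re\,\mrm{div}\,\nabla^*\!\left(\bar{\alpha}\alpha f'(\bar{\alpha}\alpha)\right)$, reconstructing $\f{m}(\omega,\alpha(\omega))$ with the correct coefficients.

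The main obstacle is the index bookkeeping in this last step: each channel produces several terms carrying first and second derivatives of $\varphi$ contracted with $\nabla\bar{\alpha}$, $\alpha$ and $f'(\bar{\alpha}\alpha)$, and one must integrate by parts repeatedly until exactly the two divergence expressions survive, with their coefficients $1$ and $2$ and the overall factor $\frac12$. A secondary technical point is to justify the spectral identity $\diff_t f(\bar{\alpha}\alpha)=\mrm{Tr}\!\left(f'(\bar{\alpha}\alpha)\,\diff_t(\bar{\alpha}\alpha)\right)$ at points where the eigenvalues $\lambda_a$ collide; this follows from the simultaneous diagonalization underlying the definition of $f'$, together with a continuity argument reducing the identity to the open dense locus where the $\lambda_a$ are distinct.
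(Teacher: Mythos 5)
Your reduction is the same as the paper's: the K-energy accounts for the scalar-curvature part of $\f{m}_{\bm{\Omega}}$, so everything comes down to showing that $\m{H}_\alpha$ is a primitive of the divergence part, i.e. $\mrm{d}_\omega\m{H}_\alpha(\varphi)=-\left\langle\varphi,\f{m}(\omega,\alpha(\omega))\right\rangle_{L^2(\omega)}$, and you correctly identify that the horizontal lift equation~\eqref{eq:hor_lift} is what feeds $\diff_t\alpha(\omega_t)$ into the variation. The problem is what comes next: the verification that your three variation channels plus the volume term assemble into $\Re\,\mrm{div}\left(\I Jg_J(\nabla^\sharp\bar{\alpha},\alpha f'(\bar{\alpha}\alpha))+2\nabla^*(\bar{\alpha}\alpha f'(\bar{\alpha}\alpha))\right)$ \emph{is} the entire content of the theorem, and you do not carry it out --- you only state that you ``expect'' a particular grouping (horizontal channel $\to$ first divergence term, metric channels plus volume $\to$ second). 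That grouping is an unverified guess, and it is doubtful as stated: the adjoint $\bar{\alpha}$ depends on $\alpha$ as well as on $g$, so the horizontal channel feeds into $\bar{\alpha}$ too, and after the integrations by parts and the taking of real parts the contributions of the different channels mix; nothing in your argument would detect a wrong coefficient or a leftover term. (A small symptom of the looseness: you integrate by parts ``so that the derivatives fall onto $\varphi$'', but to compare with $\int\varphi\,\f{m}\,\frac{\omega^n}{n!}$ you must strip derivatives \emph{off} $\varphi$.)

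The idea you are missing, and which lets the paper avoid this computation entirely, is that $\f{m}$ has an intrinsic characterization as a moment map: by~\eqref{eq:moment_map_BiquardGauduchon}, $\left\langle\f{m}(J,\alpha),h\right\rangle=\frac{1}{2}\int_M\mrm{d}^cf(\hat{h}_{J,\alpha})\frac{\omega^n}{n!}$, with $\mrm{d}^cf=-\mrm{d}f(\bm{I}\,\cdot)$ on $\cotJ$. One then computes $\diff_t\m{H}_\alpha(\omega_t)$ not by differentiating under the integral with a moving volume form, but by first gauge-fixing: pulling back by the isotopy $\psi_t$ of $\frac{1}{2}JX^{\omega_t}_\varphi$ freezes the volume form ($\psi_t^*\omega_t=\omega$), and by Proposition~\ref{prop:complessificazione_cotJ} the curve $t\mapsto\psi_t^*\left(J,\alpha(\omega_t)\right)$ is tangent at $t=0$ to $\bm{I}_{J,\alpha(\omega)}\hat{\varphi}_{J,\alpha(\omega)}$. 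Hence $\varphi_\omega(\m{H}_\alpha)=\frac{1}{2}\int_M\mrm{d}f\left(\bm{I}\hat{\varphi}\right)\frac{\omega^n}{n!}=-\frac{1}{2}\int_M\mrm{d}^cf\left(\hat{\varphi}\right)\frac{\omega^n}{n!}=-\left\langle\f{m}(\omega,\alpha(\omega)),\varphi\right\rangle$, with no index bookkeeping: the explicit divergence formula~\eqref{eq:mm_real_explicit} is never needed. If you insist on the direct route, you must actually perform the term matching (essentially re-deriving~\eqref{eq:mm_real_explicit} from~\eqref{eq:mm_real_definition}); as written, the decisive step of your proof is a conjecture rather than an argument.
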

\begin{proof}
Fix a constant vector field~$\varphi\in\Omega$; to compute the derivative of~$\m{H}_{\alpha}$ along~$\varphi$ we proceed as in the proof of Proposition~\ref{prop:sigma_exact}. Let~$\omega_t=\omega+\I\diff\bdiff t\varphi$ be an integral path for~$\varphi$ and let~$\psi_t$ be the isotopy of~$\frac{1}{2}JX^{\omega_t}_\varphi$, so that~$\psi_t^*\omega_t=\omega$. We can write~$\m{H}_{\alpha}$ along the path as
\begin{equation}
\m{H}_{\alpha}(\omega_t)=\frac{1}{2}\int f(\bar{\alpha}(\omega_t)\alpha(\omega_t))\frac{\omega_t^n}{n!}=\frac{1}{2}\int f\left(\psi_t^*\bar{\alpha}(\omega_t)\psi_t^*\alpha(\omega_t)\right)\frac{\omega^n}{n!}.
\end{equation}
To compute the differential of~$\m{H}$, we use again that~$\psi_t^*\left(J,\alpha(\omega_t)\right)$ lies in a purely complex orbit of~$\m{G}\curvearrowright\cotJ$, so that
\begin{equation}
\begin{split}
\varphi_\omega(\m{H}_{\alpha,\omega_0})=&\frac{1}{2}\diff_t\Bigr|_{t=0}\int f\left(\psi_t^*\bar{\alpha}(\omega_t)\psi_t^*\alpha(\omega_t)\right)\frac{\omega^n}{n!}=\frac{1}{2}\int\mrm{d}f(\bm{I}_{J,\alpha(\omega)}\hat{\varphi}_{J,\alpha(\omega)})\frac{\omega^n}{n!}.
\end{split}
\end{equation}
The real moment map is given by
\begin{equation}\label{eq:moment_map_BiquardGauduchon}
\left\langle\f{m}(J,\alpha),h\right\rangle=\frac{1}{2}\int_X\mrm{d}^cf(\hat{h}_{J,\alpha})\frac{\omega^n}{n!}.
\end{equation}
As~$\mrm{d}^cf=-\mrm{d}f\left(J\cdot\right)$, we obtain
\begin{equation}
\varphi_\omega(\m{H}_{\alpha,\omega_0})=\frac{1}{2}\int\mrm{d}f(\bm{I}_{J,\alpha(\omega)}\hat{\varphi}_{J,\alpha(\omega)})\frac{\omega^n}{n!}=-\left\langle\f{m}(\omega,\alpha(\omega)),\varphi\right\rangle.
\end{equation}
\end{proof}

\subsection{Solving the complex moment map equation}\label{sec:complex_eq}

The symplectic form~$\bm{\Theta}$ is of type~$(2,0)$ with respect to the complex structure~$\bm{I}$ of~$\cotJ$. We can use this fact to describe solutions of the equation~$\f{m}_{\bm{\Theta}}(\omega,J,\alpha)=0$ along a complexified orbit of~$\m{G}\curvearrowright\cotJ$.
\begin{proof}[Proof of Proposition~\ref{prop:intro_sol_complex_mm}]
Fix~$(J,\alpha)\in\cotJ$,~$h\in\m{C}^\infty(M,\bb{R})$, and consider the complexified orbit of~$(J,\alpha)$ in the direction~$\I h$ as in Proposition~\ref{prop:complessificazione_cotJ},
\begin{equation}
t\mapsto\psi_t^*(J,\alpha(\omega_t))
\end{equation}
where~$\omega_t=\omega+\I\diff\bdiff\,th$,~$\psi_t$ is the isotopy of~$\frac{1}{2}JX^{\omega_t}_h$ and~$\alpha(\omega_t)$ is the parallel transport of~$\alpha$ along the path~$\omega_t$. Let~$(J_t,\alpha_t)\coloneqq \psi_t^*(J,\alpha(\omega_t))$. For any function~$\varphi\in\m{C}^\infty(M,\bb{R})$, consider the pairing
\begin{equation}
\left\langle\f{m}_{\bm{\Theta}}(\omega_0,J_t,\alpha_t),\varphi\right\rangle_{L^2(\omega_0)}.
\end{equation}
Since~$\f{m}_{\bm{\Theta}}$ is a moment map, the derivative of this is
\begin{equation}
\diff_t\left\langle\f{m}_{\bm{\Theta}}(\omega_0,J_t,\alpha_t),\varphi\right\rangle_{L^2(\omega_0)}=-\bm{\Theta}\left(\hat{\varphi}_{J_t,\alpha_t},\diff_t\left(J_t,\alpha_t\right)\right).
\end{equation}
Recall now that~$\diff_t\left(J_t,\alpha_t\right)=\bm{I}_{J_t,\alpha_t}\hat{h}_{J_t,\alpha_t}$; as~$\bm{\Theta}$ is of type~$(2,0)$,
\begin{equation}
\bm{\Theta}\left(\hat{\varphi}_{J_t,\alpha_t},\diff_t\left(J_t,\alpha_t\right)\right)=\I\bm{\Theta}\left(\hat{\varphi}_{J_t,\alpha_t},\hat{h}_{J_t,\alpha_t}\right).
\end{equation}
Let~$\Phi_s$ be the flow of~$X^{\omega_0}_{h}$. It preserves the form~$\omega_0$, and knowing that~$\f{m}_{\bm{\Theta}}$ is a moment map we find
\begin{equation}
\begin{gathered}
\bm{\Theta}\left(\hat{\varphi}_{J_t,\alpha_t},\hat{h}_{J_t,\alpha_t}\right)=-\diff_s\Bigr|_{s=0}\left\langle\f{m}_{\bm{\Theta}}(\omega_0,\Phi_s^*(J_t,\alpha_t)),\varphi\right\rangle=\\
=-\diff_s\Bigr|_{s=0}\left\langle\f{m}_{\bm{\Theta}}(\omega_0,J_t,\alpha_t),{\Phi_s^{-1}}^*\varphi\right\rangle=\left\langle\f{m}_{\bm{\Theta}}(\omega_0,J_t,\alpha_t),\m{L}_{X^{\omega_0}_h}\varphi\right\rangle=\\
=-\left\langle\mrm{div}\left(\f{m}_{\bm{\Theta}}(\omega_0,J_t,\alpha_t)X^{\omega_0}_h\right),\varphi\right\rangle.
\end{gathered}
\end{equation}
As~$X^{\omega_0}_h$ is divergence-free, we find a differential equation for~$\f{m}_{\bm{\Theta}}(\omega_0,J_t,\alpha_t)$:
\begin{equation}
\diff_t\,\f{m}_{\bm{\Theta}}(\omega_0,J_t,\alpha_t)=\I\,X^{\omega_0}_h\left(\f{m}_{\bm{\Theta}}(\omega_0,J_t,\alpha_t)\right)
\end{equation}
and we are assuming that~$(J_0,\alpha_0)=(J,\alpha)$ is a solution of the moment map equation, i.e.~$\f{m}_{\bm{\Theta}}(\omega_0,J_0,\alpha_0)=0$. Then, the only solution of this PDE is~$\f{m}_{\bm{\Theta}}(\omega_0,J_t,\alpha_t)=0$.
\end{proof}
As a corollary, if we have a solution to the complex equation in~\eqref{eq:complessificate_definitive}, i.e.~$(\alpha_0,\omega_0)\in\m{E}$ such that~$\m{D}_{\omega_0}^*\alpha_0=0$, then also~$(\alpha(\omega),\omega)$ will be a solution of the complex equation for any other~$\omega\in\Omega$, if we extend~$\alpha_0$ to a Higgs field~$\alpha$. This means that once we fix a solution~$(\alpha_0,\omega_0)$ of the complex moment map equation, we can focus on solving the real moment map equation by moving~$\omega_0$ in the K\"ahler class and moving~$\alpha_0$ along the horizontal transport defined by~$D$ on~$\m{E}$.

The general theory of the Lichnerowicz operator on K\"ahler manifolds allows a simple description of the set of solutions of the complex moment map equation, in a fixed fibre of~$\m{E}\to\Omega$. Recall that~$\f{h}$ is the Lie algebra of holomorphic vector fields on~$M$, while~$\f{h}_0$ is the algebra of holomorphic vector fields that admit a holomorphy potential.
\begin{lemma}\label{lem:dim_complex_mm_sols}
Fix~$\omega\in\Omega$, and let~$\eta\in H^1(T^{1,0}M)$ be a first-order deformation class such that~$\eta\cap\m{E}_\omega\not=\emptyset$. Then the set of solutions~$\alpha\in\eta\cap\m{E}_\omega$ of the equation~$\m{D}_\omega^*\alpha=0$ is a space of (complex) dimension~$h^1(M)-\dim\f{h}+\dim\f{h}_0$.
\end{lemma}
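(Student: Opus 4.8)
The plan is to turn the count into linear algebra on harmonic $(0,1)$-forms. Since $\m{D}_\omega^*$, the compatibility condition, and the constraint $[\alpha]=\eta$ are all affine-linear in $\alpha$, the solution set is an affine space modelled on the vector space
\begin{equation}
N=\set*{\beta=\bdiff W\tc \beta\text{ compatible with }\omega,\ \m{D}_\omega^*\beta=0}
\end{equation}
of $\bdiff$-exact compatible deformations annihilated by $\m{D}_\omega^*$, so it suffices to compute $\dim_{\bb{C}}N$. That the solution set is nonempty follows from the hypothesis $\eta\cap\m{E}_\omega\neq\emptyset$: starting from any compatible representative $\alpha_1\in\eta\cap\m{E}_\omega$, the equation $\m{D}_\omega^*\m{D}_\omega\Phi=-\m{D}_\omega^*\alpha_1$ is solvable because its right-hand side is $L^2$-orthogonal to $\ker\m{D}_\omega$ (as $\langle\m{D}_\omega^*\alpha_1,\Psi\rangle=\langle\alpha_1,\m{D}_\omega\Psi\rangle=0$ for $\Psi\in\ker\m{D}_\omega$) and $\m{D}_\omega^*\m{D}_\omega$ is self-adjoint with kernel $\ker\m{D}_\omega=\bb{C}\oplus\f{h}_0$; then $\alpha_1+\m{D}_\omega\Phi$ solves the equation in the class $\eta$.

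First I would describe the compatible exact deformations. Writing $W^\flat=g_J(W,\cdot)$ for the $(0,1)$-form dual to a field $W\in\Gamma(T^{1,0}M)$, a direct computation using that $\omega$ is K\"ahler shows that the antisymmetric part of the tensor $g_J(\bdiff W\,\cdot,\cdot)$ equals $\tfrac12\bdiff(W^\flat)$; hence $\bdiff W$ is compatible with $\omega$ precisely when $W^\flat$ is $\bdiff$-closed. By Hodge theory $W^\flat=\bdiff\Phi+\gamma$ with $\gamma$ a $\bdiff$-harmonic $(0,1)$-form, so every compatible exact deformation has the form $\bdiff W=\m{D}_\omega\Phi+\bdiff(\gamma^\sharp)$, where $\m{D}_\omega\Phi=\bdiff\bigl((\bdiff\Phi)^\sharp\bigr)$ is itself always compatible.

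Next I would set up the linear map $\Psi\colon\m{H}^{0,1}\to N$ sending $\gamma$ to $\m{D}_\omega\Phi_\gamma+\bdiff(\gamma^\sharp)$, where $\Phi_\gamma$ solves $\m{D}_\omega^*\m{D}_\omega\Phi_\gamma=-\m{D}_\omega^*\bdiff(\gamma^\sharp)$ (again solvable since the right-hand side kills $\ker\m{D}_\omega$, and $\m{D}_\omega\Phi_\gamma$ is independent of the chosen solution). The description of compatible exact forms above, combined with $\m{D}_\omega^*\beta=0$, shows $\Psi$ is surjective onto $N$. To find $\ker\Psi$, note $\gamma\in\ker\Psi$ iff $\bdiff\bigl(\gamma^\sharp+(\bdiff\Phi_\gamma)^\sharp\bigr)=0$, i.e. $X:=\gamma^\sharp+(\bdiff\Phi_\gamma)^\sharp$ is a holomorphic vector field; since $X^\flat=\gamma+\bdiff\Phi_\gamma$ has harmonic part $\gamma$, this identifies $\ker\Psi$ with the image of $\rho\colon\f{h}\to\m{H}^{0,1}$, $X\mapsto(\text{harmonic part of }X^\flat)$. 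As $\rho(X)=0$ iff $X^\flat$ is $\bdiff$-exact iff $X=(\bdiff u)^\sharp$ admits a holomorphy potential, we get $\ker\rho=\f{h}_0$, so $\dim\ker\Psi=\dim\f{h}-\dim\f{h}_0$ and
\begin{equation}
\dim_{\bb{C}}N=\dim_{\bb{C}}\m{H}^{0,1}-\bigl(\dim\f{h}-\dim\f{h}_0\bigr)=h^1(M)-\dim\f{h}+\dim\f{h}_0.
\end{equation}

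The routine inputs are the Hodge decomposition and the self-adjointness of $\m{D}_\omega^*\m{D}_\omega$ with $\ker\m{D}_\omega=\bb{C}\oplus\f{h}_0$. The two steps that need care are the K\"ahler identity characterizing compatibility of $\bdiff W$ through the closedness of $W^\flat$, and the bookkeeping showing $\ker\Psi=\operatorname{im}\rho$ with $\ker\rho=\f{h}_0$, which is exactly where the automorphism correction $-\dim\f{h}+\dim\f{h}_0$ enters.
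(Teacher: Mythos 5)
Your proposal is correct, and it reaches the dimension formula by a genuinely different route than the paper. The paper's own proof handles existence exactly as you do (solving $\m{D}^*\m{D}\varphi=-\m{D}^*\alpha_0$ via the Fredholm alternative), but for the dimension count it simply cites Fujiki--Schumacher: after fixing one solution, it identifies the solution space with their harmonic space $\overline{\m{H}}^1_B$ and reads off $\dim H^1(M)+\dim\f{h}_0-\dim\f{h}$ from the decompositions in their Theorem~$2.5$, so the mechanism producing the correction $-\dim\f{h}+\dim\f{h}_0$ stays inside a black box. You instead make that mechanism explicit and self-contained: the observation that, on a K\"ahler manifold, $\bdiff W$ is compatible with $\omega$ precisely when $\bdiff(W^\flat)=0$ (your K\"ahler-identity step checks out, since the metric terms $W^a\diff_{\bar c}g_{a\bar b}$ are symmetric in the barred indices and die in the wedge), followed by the Hodge decomposition $W^\flat=\bdiff\Phi+\gamma$, yields the surjection $\Psi\colon\m{H}^{0,1}\to N$ whose kernel is exactly the image of $\rho\colon\f{h}\to\m{H}^{0,1}$, with $\ker\rho=\f{h}_0$; this is in effect a direct reconstruction of the Fujiki--Schumacher decomposition adapted to this setting. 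What the paper's approach buys is brevity and an anchor to established moduli-theoretic results; what yours buys is transparency (one sees exactly why $\f{h}/\f{h}_0$ enters) and independence from the cited machinery. One point worth flagging: your count gives $\dim_{\bb{C}}\m{H}^{0,1}=h^{0,1}(M)=\dim H^1(M,\m{O}_M)$, so it matches the statement only under the reading $h^1(M)=h^{0,1}(M)$ rather than $b_1(M)$; this is indeed the paper's intended meaning, as its curve examples confirm (genus $1$: $h^1=1=\dim\f{h}$; genus $g\geq2$: $h^1=g$, giving the total $4g-3$), so your proof in fact settles the notational ambiguity in the right direction.
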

\begin{proof}
Notice first that the space of solutions of~$\m{D}^*\alpha=0$ in~$\eta\cap\m{E}_\omega$ is not empty: fix a reference point~$\alpha_0\in\eta\cap\m{E}_\omega$, and consider for~$\varphi\in\m{C}^\infty(M)$ the first-order deformation~$\alpha_\varphi=\alpha_0+\bdiff\left(\nabla^{1,0}\varphi\right)$. This is still an element of~$\eta\cap\m{E}_\omega$, and the equation~$\m{D}^*\alpha_\varphi=0$ is equivalent to
\begin{equation}
\m{D}^*\m{D}\varphi=-\m{D}^*\alpha_0.
\end{equation}
The Fredholm alternative for~$\m{D}^*\m{D}$ implies that this equation has solutions. To compute the dimension of the space of solutions we appeal to \cite[Theorem~$2.5$]{FujikiSchumacher_moduli}. After fixing~$\alpha_1\in\eta\cap\m{E}_\omega$ solving~$\m{D}^*\alpha_1=0$, the space of solutions is in~$1$-to-$1$ correspondence with the space denoted~$\overline{\m{H}}^1_B$ in \cite{FujikiSchumacher_moduli}. Using their notation, the dimension can be computed as
\begin{equation}
\begin{split}
\dim\overline{\m{H}}^1_B=&\dim\hat{\m{H}}_B-\dim\ker\!\bdiff\restriction_{\hat{\m{H}}_B}=\dim`\hat{\m{H}}^1_B+\dim\nabla^{1,0}\m{H}^0_B-\dim\ker\!\bdiff\restriction_{\hat{\m{H}}_B}=\\
=&\dim H^1(M)+\dim\f{h}_0-\dim\f{h}
\end{split}
\end{equation}
since~$H^1(M)\cong`\hat{\m{H}}^1_B$ by \cite[Theorem~$2.5$]{FujikiSchumacher_moduli}.
\end{proof}
For example, in the particular case of a complex curve~$\m{C}$, every first-order deformation class~$\eta$ is contained in~$\m{E}_\omega$ for any~$\omega\in\Omega$, since the compatibility condition is trivial in dimension~$1$. If~$g(\m{C})=0$ then~$\dim H^1(T\m{C})=0$,~$h^1(M)=0$ and~$\dim\f{h}=\dim\f{h}_0$, so the only solution of~$\m{D}^*\alpha=0$ is~$\alpha=0$. If~$g(\m{C})=1$ instead we have~$\dim H^1(T\m{C})=1$,~$h^1(M)=\dim\f{h}$ and~$\dim\f{h}_0=0$, so that there is a one-dimensional space of solutions of~$\m{D}^*\alpha=0$. Finally, for~$g(\m{C})\geq 2$ we have~$\mrm{dim}\,H^1(T\m{C})=3g-3$ and~$\mrm{dim}\,H^1(M)=g$, while there are no holomorphic vector fields. Lemma~\ref{lem:dim_complex_mm_sols} then tells us that for any~$\omega\in\Omega$ the set of solutions to~$\m{D}^*\alpha=0$ is a space of dimension~$4g-3$. This result was already obtained in~\cite{ScarpaStoppa_HcscK_curve}, from a slightly different point of view.

\begin{rmk}\label{rmk:deformation_polarized}
Combining the results of \cite[Theorem~$2.4$]{FujikiSchumacher_moduli} and \cite[Theorem~$2.9$]{FujikiSchumacher_moduli} in the polarized case~$\Omega=\mrm{c}_1(L)$, we see that solutions of the complex moment map equation~$\m{D}^*\alpha=0$ are in~$1$-to-$1$ correspondence with~$H^1(M,\m{E}_L)$, the space of first-order deformations of~$(M,L)$. In other words, for each fixed K\"ahler metric~$\omega$, the complex moment map equation gives a way to choose a unique representative of any class in~$H^1(M,\m{E}_L)$.
\end{rmk}

\subsection{Linearization and a perturbation result}\label{sec:linearization}

In this Section we investigate the linearized equation in a neighbourhood of a solution of equation~\eqref{eq:mm_real_explicit}, proving Theorem~\ref{thm:intro_linearizzazione}.

\begin{proof}[Proof of Theorem~\ref{thm:intro_linearizzazione}]
The linearization of the scalar curvature is a fourth order elliptic operator; for~$\varphi\in\m{C}^\infty(M)$, the highest order term in the linearization of the scalar curvature along~$\omega+\I\diff\bdiff t\varphi$ is
\begin{equation}
\diff_{t=0}s(\omega+\I\diff\bdiff t\varphi)=-g^{a\bar{b}}g^{c\bar{d}}\diff_a\diff_{\bar{b}}\diff_c\diff_{\bar{d}}\varphi+\mbox{l.o.t.}
\end{equation}
To compute the linearization of the divergence term we let~$\alpha_t\coloneqq \alpha(\omega_t)$; notice that
\begin{equation}
\diff_{t=0}\alpha(\omega_t)=\bdiff\left(\alpha(\diff\varphi)^{\sharp}\right)=g^{a\bar{c}}\alpha\indices{_{\bar{c}}^d}\diff_d\diff_{\bar{b}}\varphi\mrm{d}\bar{z}^b\otimes\diff_a+\mrm{l.o.t.}
\end{equation}
is of second order in~$\varphi$, so the linearization of the divergence term, namely
\begin{equation}
\mrm{div}_t\,\Re\left(\I Jg_t\left(\nabla_t^{\sharp_t}\bar{\alpha}_t,\alpha_t\,f'(\bar{\alpha}_t\alpha_t)\right)+2\nabla_t^{*_t}\left(f'(\bar{\alpha}_t\alpha_t)\bar{\alpha}_t\alpha_t\right)\right)
\end{equation}
is a fourth order operator. Considering only the top-order terms in~$\varphi$ we get
\begin{equation}\label{eq:lineariz_firstterm}
\begin{split}
\diff_{t=0}&\left[\I Jg_t\left(\nabla_t^{\sharp_t}\bar{\alpha}_t,\alpha_t\,f'(\bar{\alpha}_t\alpha_t)\right)\right]=\\
=&-g\left(\nabla^{1,0}\diff_{t=0}\bar{\alpha}_t,\alpha\,f'(\bar{\alpha}\alpha)\right)+g\left(\nabla^{0,1}\diff_{t=0}\bar{\alpha}_t,\alpha\,f'(\bar{\alpha}\alpha)\right)=\\
=&-g^{a\bar{b}}\diff_{\bar{b}}\diff_c\diff_{\bar{e}}\varphi\,g^{c\bar{d}}(\alpha f'(\bar{\alpha}\alpha)\bar{\alpha})\indices{_{\bar{d}}^{\bar{e}}}\diff_a+g^{a\bar{b}}\diff_a\diff_c\diff_{\bar{e}}\varphi\,g^{c\bar{d}}(\alpha f'(\bar{\alpha}\alpha)\bar{\alpha})\indices{_{\bar{d}}^{\bar{e}}}\diff_{\bar{b}}.
\end{split}
\end{equation}
As~$g^{-1}(\alpha f'(\bar{\alpha}\alpha)\bar{\alpha})$ is Hermitian,~\eqref{eq:lineariz_firstterm} is purely imaginary. To compute the principal symbol of~$\m{L}$ then, it is enough to consider the derivative
\begin{equation}\label{eq:lineariz_secondpart}
\diff_{t=0}\mrm{div}_t\,\Re\left(2\nabla_t^{*_t}\left(f'(\bar{\alpha}(\omega_t)\alpha(\omega_t))\bar{\alpha}(\omega_t)\alpha(\omega_t)\right)\right).
\end{equation}
We need an expression for the second directional derivative of spectral functions; such an expression can be found for example in~\cite{derivatives_eigenvalues}, but it only holds for spectral functions of Hermitian matrices. In order to use this result, we first perform a change of variables.

Considering the matrices associated to~$\alpha_t$ and~$g_t$ in a local system of complex coordinates, we can write~$\bar{\alpha}_t=q_tg_t^{-1}$ for a complex symmetric matrix~$q_t$, corresponding to a tensor~$q_t=q(t)_{ab}\mrm{d}z^a\odot\mrm{d}z^b\in\mrm{Sym}^2(TM)$. The composition~$\bar{\alpha}_t\alpha_t$ is written as
\begin{equation}
\bar{\alpha}_t\alpha_t=q_tg_t^{-1}\bar{q}_tg_t^{-1}=g_t^{\frac{1}{2}}g_t^{-\frac{1}{2}}q_tg_t^{-1}\bar{q}_tg_t^{-\frac{1}{2}}g_t^{-\frac{1}{2}}
\end{equation}
and the matrix~$H_t\coloneqq g_t^{-\frac{1}{2}}q_tg_t^{-1}\bar{q}_tg_t^{-\frac{1}{2}}$ is Hermitian. The tensor~$f'(\bar{\alpha}_t\alpha_t)\bar{\alpha}_t\alpha_t$ can then be rewritten as
\begin{equation}
f'(\bar{\alpha}_t\alpha_t)\bar{\alpha}_t\alpha_t=g_t^{\frac{1}{2}}f'(H_t)H_tg_t^{-\frac{1}{2}}.
\end{equation}
As we are only interested in the highest-order derivatives of~$\varphi$ in~\eqref{eq:lineariz_secondpart}, we just need to consider
\begin{equation}
\begin{split}
\Re\,\mrm{div}&\left(2\nabla^*\diff_{t=0}\left(g_t^{\frac{1}{2}}f'(H_t)H_tg_t^{-\frac{1}{2}}\right)\right)=\\
&=-2\,\Re\left(g^{a\bar{c}}\nabla_b\nabla_{\bar{c}}\diff_{t=0}\left(g_t^{\frac{1}{2}}f'(H_t)H_tg_t^{-\frac{1}{2}}\right)\indices{_a^b}\right)
\end{split}
\end{equation}
and we can furthermore assume that~$g_{t=0}=\bb{1}$,~$q_{t=0}=\mrm{diag}\left(\sqrt{\lambda_1},\dots,\sqrt{\lambda_n}\right)$. To simplify notation we will also assume that the eigenvalues of~$\alpha_t$ are all distinct, for small enough~$t$. With these hypothesis we obtain
\begin{equation}
\diff_{t=0}\left(g_t^{\frac{1}{2}}f'(H_t)H_tg_t^{-\frac{1}{2}}\right)\indices{_a^b}=\frac{1}{2}\left(\lambda_b\,k'(\lambda_b)-\lambda_a\,k'(\lambda_a)\right)\diff_a\diff_{\bar{b}}\varphi+\diff_{t=0}\left(f'(H_t)H_t\right)^{\bar{a}b}
\end{equation}
here, and in the remainder of the proof, we do not use the summation convention on repeated indices. The derivative of~$H_t$ can be computed from~$\diff_{t=0}q_t$; a straightforward computation using~\eqref{eq:hor_lift} gives
\begin{equation}
\begin{split}
\diff_{t=0}\left(q_t\right)_{cd}=&\sqrt{\lambda_d}\diff_c\diff_{\bar{d}}\varphi+\sqrt{\lambda_c}\diff_d\diff_{\bar{c}}\varphi\\
\diff_{t=0}\left(H_t\right)^{\bar{a}b}=&\frac{\lambda_a+\lambda_b}{2}\diff_a\diff_{\bar{b}}\varphi+\sqrt{\lambda_a\lambda_b}\diff_{\bar{a}}\diff_b\varphi.
\end{split}
\end{equation}
At this point, we can obtain an expression for the derivative of~$f'(H_t)H_t$ from~\cite[Theorem~$3.1$]{derivatives_eigenvalues}.
\begin{equation}
\begin{gathered}
\diff_{t=0}\left(f'(H_t)H_t\right)^{\bar{a}b}=\sum_c\diff_{t=0}\left(f'(H_t)\right)^{\bar{a}c}\delta_{cb}\lambda_b+\delta_{ac}k'(\lambda_a)\diff_{t=0}H_t^{\bar{c}b}=\\
=\left(\lambda_b\,\delta^{ab}\,k''(\lambda_a)
+\lambda_b(1-\delta_{ab})\frac{k'(\lambda_a)-k'(\lambda_b)}{\lambda_a-\lambda_b}
+k'(\lambda_a)
\right)\cdot\\
\hfill\cdot\left(\frac{\lambda_a+\lambda_b}{2}\diff_a\diff_{\bar{b}}\varphi+\sqrt{\lambda_a\lambda_b}\diff_{\bar{a}}\diff_b\varphi\right).
\end{gathered}
\end{equation}
Putting all this together, we obtain
\begin{equation}\label{eq:linearizz_acc1}
\begin{gathered}
\sum_{a,b,c}g^{a\bar{c}}\nabla_b\nabla_{\bar{c}}\diff_{t=0}\left(g_t^{\frac{1}{2}}f'(H_t)H_tg_t^{-\frac{1}{2}}\right)\indices{_a^b}=\hfill\\
=\mrm{l.o.t.}-\sum_{a,b}\left(\frac{1}{2}\left(\lambda_b\,k'(\lambda_b)-\lambda_a\,k'(\lambda_a)\right)\diff_b\diff_{\bar{a}}\diff_a\diff_{\bar{b}}\varphi\right)-\\
-\sum_{a,b}\left(\lambda_b\,\delta^{ab}\,k''(\lambda_a)
+\lambda_b(1-\delta_{ab})\frac{k'(\lambda_a)-k'(\lambda_b)}{\lambda_a-\lambda_b}
+k'(\lambda_a)
\right)\cdot\\
\hfill\cdot\left(\frac{\lambda_a+\lambda_b}{2}\diff_b\diff_{\bar{a}}\diff_a\diff_{\bar{b}}\varphi+\sqrt{\lambda_a\lambda_b}\diff_b\diff_{\bar{a}}\diff_{\bar{a}}\diff_b\varphi\right).
\end{gathered}
\end{equation}
The first sum on the right-hand side of the equation~\eqref{eq:linearizz_acc1} 
is anti-symmetric in the indices~$a,b$, so that sum vanishes. Then, equation~\eqref{eq:linearizz_acc1} shows that the principal symbol of the linearization is
\begin{equation}
\begin{gathered}
\sigma(\xi)=
-\sum_{a,b}\left(\lambda_b\,\delta^{ab}\,k''(\lambda_a)
+\lambda_b(1-\delta_{ab})\frac{k'(\lambda_a)-k'(\lambda_b)}{\lambda_a-\lambda_b}
+k'(\lambda_a)
\right)\cdot\\
\hfill\cdot\left(\frac{\lambda_a+\lambda_b}{2}\abs{\xi^b}^2\abs{\xi^a}^2+\sqrt{\lambda_a\lambda_b}\,\Re\left((\xi^a)^2(\bar{\xi}^b)^2\right)\right).
\end{gathered}
\end{equation}
We can reorganize the sum as
\begin{equation}
\begin{split}
\sigma(\xi)=&-\sum_a\left(\lambda_a\,k''(\lambda_a)+k'(\lambda_a)\right)2\lambda_a\abs{\xi^a}^4+\\
&-\sum_{a\not= b}\left(\lambda_b\frac{k'(\lambda_a)-k'(\lambda_b)}{\lambda_a-\lambda_b}+k'(\lambda_a)\right)\cdot\\
&\phantom{-\sum_{a\not= b}\cdot}\cdot\left(\frac{\lambda_a+\lambda_b}{2}\abs{\xi^b}^2\abs{\xi^a}^2+\sqrt{\lambda_a\lambda_b}\,\Re\left((\xi^a)^2(\bar{\xi}^b)^2\right)\right)
\end{split}
\end{equation}
and under our assumptions on~$k$, it is clear that the first sum on the right-hand side is strictly negative, for~$\xi\not=0$. We will show that the same holds for the second sum. To do so, it will be enough to consider two fixed indices~$a,b$ and show that
\begin{equation}
\begin{split}
&\left(\lambda_b\frac{k'(\lambda_a)-k'(\lambda_b)}{\lambda_a-\lambda_b}+k'(\lambda_a)\right)\!\left(\frac{\lambda_a+\lambda_b}{2}\abs{\xi^b}^2\abs{\xi^a}^2+\sqrt{\lambda_a\lambda_b}\,\Re\left((\xi^a)^2(\bar{\xi}^b)^2\right)\right)+\\
&+\left(\lambda_a\frac{k'(\lambda_b)-k'(\lambda_a)}{\lambda_b-\lambda_a}+k'(\lambda_b)\right)\!\left(\frac{\lambda_b+\lambda_a}{2}\abs{\xi^a}^2\abs{\xi^b}^2+\sqrt{\lambda_b\lambda_a}\,\Re\left((\xi^b)^2(\bar{\xi}^a)^2\right)\right)
\end{split}
\end{equation}
is positive. As~$\Re\left((\xi^b)^2(\bar{\xi}^a)^2\right)>-\abs{\xi^a}^2\abs{\xi^b}^2
$, this is greater than
\begin{equation}
\left((\lambda_a+\lambda_b)\frac{k'(\lambda_a)-k'(\lambda_b)}{\lambda_a-\lambda_b}+k'(\lambda_a)+k'(\lambda_b)\right)\left(\frac{\lambda_a+\lambda_b}{2}-\sqrt{\lambda_a\lambda_b}\right)\abs{\xi^a}^2\abs{\xi^b}^2
\end{equation}
which is clearly positive.

The second part of Theorem~\ref{thm:intro_linearizzazione} follows almost directly from the proof of Proposition~\ref{prop:sigma_exact}, and is in fact a general feature of moment map equations in K\"ahler geometry. Assume that~$\alpha$ is a Higgs field on~$\Omega$, and consider the linearization along the path~$\omega+\I\diff\bdiff t\varphi$ for some~$\varphi\in\m{C}^\infty(M)$. 

As in the proof of Proposition~\ref{prop:sigma_exact}, we let~$\omega_t=\omega+\I\diff\bdiff t\varphi$,~$X_t\coloneqq \frac{1}{2}JX^{\omega_t}_\varphi$, and~$\psi_t$ will be the isotopy of~$X_t$. If we denote by~$\m{L}_\alpha(\varphi)$ the linearization of~$\f{m}_{\bm{\Omega}}$ at~$(\omega,\alpha)$, we have
\begin{equation}\label{eq:linearizz_implicit}
\begin{split}
\m{L}_\alpha(\varphi)=&\diff_{t=0}\f{m}_{\bm{\Omega}}\left(\omega_t,J,\alpha(\omega_t)\right)=\diff_{t=0}\ \psi_t^{-1}{}^*\left[\f{m}_{\bm{\Omega}}\left(\omega,\psi_t^*\left(J,\alpha(\omega_t)\right)\right)\right]=\\
=&\m{L}_{-\frac{1}{2}\nabla\varphi}\left(\f{m}_{\bm{\Omega}}(\omega,J,\alpha)\right)+\diff_{t=0}\f{m}_{\bm{\Omega}}\left(\omega,\psi_t^*\left(J,\alpha(\omega_t)\right)\right).
\end{split}
\end{equation}
To show that~$\m{L}_\alpha$ is self-adjoint (with respect to the~$L^2$-pairing defined by~$\omega$) we should check that
\begin{equation}
\left\langle\m{L}_\alpha(\varphi),u\right\rangle=-\frac{1}{2}\left\langle\m{L}_{\nabla\varphi}\left(\f{m}_{\bm{\Omega}}(\omega,J,\alpha)\right),u\right\rangle
+\left\langle
\diff_{t=0}\f{m}_{\bm{\Omega}}\left(\omega,\psi_t^*\left(J,\alpha(\omega_t)\right)\right),u\right\rangle
\end{equation}
is symmetric in~$\varphi$ and~$u$, for every~$\varphi,u\in\m{C}^\infty(M)$. But this is precisely what we already proved in Proposition~\ref{prop:sigma_exact} to show that~$\sigma_\alpha$ is exact.

We now consider the kernel of the linearized operator around a solution. Assuming that~$\omega$ is a solution of the complexified moment map equation, the first term in~\eqref{eq:linearizz_implicit} vanishes. Then,~$\varphi\in\mrm{ker}(\m{L}_\alpha)$ if and only if, for every~$u\in\m{C}^{\infty}(M)$
\begin{equation}
\left\langle\diff_{t=0}\f{m}_{\bm{\Omega}}\left(\omega,\psi_t^*\left(J,\alpha(\omega_t)\right)\right),u\right\rangle_{L^2(\omega)}=0.
\end{equation}
The calculations in the proof of Proposition~\ref{prop:sigma_exact}, show that this is equivalent to
\begin{equation}
\bm{G}\left(\hat{u}_{J,\alpha},\hat{\varphi}_{J,\alpha}\right)=0.
\end{equation}
It follows that~$\varphi\in\mrm{ker}(\m{L}_\alpha)$ if and only if~$\hat{\varphi}_{J,\alpha}=0$, i.e.~$\m{L}_{X^\omega_\varphi}(J,\alpha)=0$.
\end{proof}
The implicit function theorem allows us to obtain various existence results for small deformations of~$\omega$ and~$\alpha$ around a solution of equation~\eqref{eq:complessificate_definitive}, under some assumptions on the automorphism group of~$(M,J,\alpha)$. We state two of such results, that might be of particular interest.
\begin{cor}
Assume that~$(\omega,\alpha)$ is a solution of~\eqref{eq:complessificate_definitive}, and assume that~$\mrm{ker}\,\m{L}_\alpha$ is trivial. Then
\begin{enumerate}
\item for any other first-order deformation of the complex structure~$\alpha'$ that is close enough to~$\alpha$, there is a K\"ahler form~$\omega'\in[\omega]$ such that~$(\omega',\alpha')$ solve~\eqref{eq:complessificate_definitive}.
\item for any spectral function~$\tilde{f}$ sufficiently close to~$f$, the moment map~\eqref{eq:mm_real_explicit} with respect to~$\tilde{f}$ also has a zero.
\end{enumerate}
\end{cor}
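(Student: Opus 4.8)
The plan is to deduce both statements from the Banach-space implicit function theorem, using Theorem~\ref{thm:intro_linearizzazione} to turn the linearized operator $\m{L}_\alpha$ into an isomorphism. First I would fix suitable function spaces: let $\m{C}^{k,\beta}_0(M)$ denote the H\"older functions of zero average (with respect to $\omega^n/n!$), and parametrize K\"ahler forms near $\omega$ by potentials $\varphi\in\m{C}^{k+4,\beta}_0$ via $\omega_\varphi=\omega+\I\diff\bdiff\varphi$. The point is that the real moment map always lands in $\m{C}^{k,\beta}_0$: the scalar-curvature term $s-\hat{s}$ integrates to zero by the choice of $\hat{s}$, and the divergence term integrates to zero by Stokes. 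By Theorem~\ref{thm:intro_linearizzazione} the linearization $\m{L}_\alpha$ at a solution is a self-adjoint elliptic operator of order four, and its kernel consists of the potentials with $\m{L}_{X^\omega_\varphi}J=\m{L}_{X^\omega_\varphi}\alpha=0$. Since constants always lie in this kernel (they give $X^\omega_\varphi=0$), the hypothesis that $\ker\m{L}_\alpha$ is trivial should be read modulo constants. On the zero-average spaces, $\m{L}_\alpha$ is then Fredholm of index zero with trivial kernel, hence an isomorphism $\m{L}_\alpha\colon\m{C}^{k+4,\beta}_0\to\m{C}^{k,\beta}_0$. This invertibility is the engine for both perturbation results.

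For part~(1) I would treat the deformation as a parameter. By Lemma~\ref{lem:dim_complex_mm_sols} the solutions of $\m{D}_\omega^*\alpha'=0$ inside the fibre $\m{E}_\omega$ form a finite-dimensional space; given any nearby first-order deformation one may first replace it, using the Fredholm alternative for $\m{D}^*\m{D}$ as in the proof of Lemma~\ref{lem:dim_complex_mm_sols}, by its canonical representative solving the complex equation, so I let $\alpha'$ range over a neighbourhood of $\alpha$ in this solution space. Extending each $\alpha'$ to its $D$-horizontal section and writing $\alpha'(\omega_\varphi)$ for the horizontal transport, I define
\begin{equation}
\Psi(\alpha',\varphi)=\f{m}_{\bm{\Omega}}\left(\omega_\varphi,J,\alpha'(\omega_\varphi)\right).
\end{equation}
Then $\Psi(\alpha,0)=0$, and the partial derivative $D_\varphi\Psi\restriction_{(\alpha,0)}$ is exactly $\m{L}_\alpha$, invertible by the previous paragraph. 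The implicit function theorem yields a smooth map $\alpha'\mapsto\varphi(\alpha')$ with $\Psi(\alpha',\varphi(\alpha'))=0$, and elliptic bootstrapping on $\m{L}_\alpha$ shows $\varphi(\alpha')$ is smooth. The complex equation comes for free: $\alpha'$ solves it at the base point, and by Proposition~\ref{prop:intro_sol_complex_mm} the $D$-horizontal transport stays in the zero-locus of $\m{D}^*$, so $(\omega_{\varphi(\alpha')},\alpha'(\omega_{\varphi(\alpha')}))$ solves the full system~\eqref{eq:complessificate_definitive}.

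For part~(2) the deformation $\alpha$ stays fixed --- the complex equation $\m{D}_\omega^*\alpha=0$ does not involve $f$ at all, so only the real equation is at stake --- and instead the spectral function becomes the parameter. Writing $\f{m}^{\tilde{f}}_{\bm{\Omega}}$ for the moment map~\eqref{eq:mm_real_explicit} built from $\tilde{f}$, I set
\begin{equation}
\Phi(\tilde{f},\varphi)=\f{m}^{\tilde{f}}_{\bm{\Omega}}\left(\omega_\varphi,J,\alpha(\omega_\varphi)\right),
\end{equation}
with $\tilde{f}$ varying near $f$ among convex functions of the form~\eqref{eq:spectral_k}. Again $\Phi(f,0)=0$ and $D_\varphi\Phi\restriction_{(f,0)}=\m{L}_\alpha$ is invertible, so the implicit function theorem produces a solution $\varphi(\tilde{f})$ of $\f{m}^{\tilde{f}}_{\bm{\Omega}}=0$ for every $\tilde{f}$ close to $f$. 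I expect the main obstacle to be purely functional-analytic: one must verify that $\Psi$ and $\Phi$ are genuinely $\m{C}^1$ (in fact smooth) maps between the chosen Banach spaces. This requires that the horizontal transport $\alpha'(\omega_\varphi)$, defined through the parabolic flow~\eqref{eq:hor_lift}, and the spectral expression $f'(\bar{\alpha}\alpha)$ depend smoothly on $(\alpha',\varphi)$ and on $\tilde{f}$ in the relevant H\"older norms. The smooth solvability of~\eqref{eq:hor_lift} together with the regularity of spectral functions of Hermitian matrices used in the proof of Theorem~\ref{thm:intro_linearizzazione} should supply this, but it is the step that needs the most care.
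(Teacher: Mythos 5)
Your proposal is correct and follows exactly the route the paper intends: the paper states this corollary without proof, presenting it as a direct application of the implicit function theorem to the conclusions of Theorem~\ref{thm:intro_linearizzazione} (ellipticity, self-adjointness at a solution, and the kernel characterization), which is precisely your argument. The details you supply --- working on zero-average H\"older spaces so that the trivial-kernel hypothesis makes $\m{L}_\alpha$ a Fredholm index-zero isomorphism, and restricting $\alpha'$ to canonical representatives solving the complex equation so that Proposition~\ref{prop:intro_sol_complex_mm} takes care of the first equation along the complexified orbit --- are consistent with the paper's intent and fill in the functional-analytic steps it leaves to the reader.
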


\section{A stability condition}\label{sec:Futaki}

This section aims to use the properties of the moment maps in~\eqref{eq:system_mm_implicit} to define an integral invariant of a subgroup of~$\mrm{Aut}(M,J)$, that vanishes if there is a solution of the real moment map equation~$\f{m}_{\bm{\Omega}}(\omega,\alpha(\omega))=0$. To define this generalization of the Futaki invariant and prove Proposition~\ref{prop:Futaki}, we need some preliminary considerations on the vector bundle of Section~\ref{sec:bundle_class}.

Let~$J$ be a complex structure on~$M$, and let~$\Omega$ be a K\"ahler class. The set of holomorphic vector fields on~$M$ is the Lie algebra of~$\mrm{Aut}(M,J)$. We are mostly interested in the subalgebra~$\f{h}_0\subseteq H^0(T^{1,0}M)$ of holomorphic vector fields that admit a holomorphy potential with respect to some K\"ahler form in~$\Omega$. We denote by~$\mrm{Aut}_0(M,J)$ the corresponding subgroup of~$\mrm{Aut}(M,J)$;~$\mrm{Aut}_0(M,J)$ is commonly called the reduced automorphism group of~$(M,J)$. The algebra~$\f{h}_0$ does not depend on the choice of the reference K\"ahler form (see~\cite{LeBrun-Simanca}), hence the reduced automorphism group only depends on the complex structure of~$M$.

\begin{lemma}\label{lemma:hol_pullbak_connection}
For any section~$\alpha\in\Gamma(\Omega,\m{E})$ and every~$\psi\in\mrm{Aut}_0(M,J)$, the map
\begin{equation}
\psi.\alpha:\omega\mapsto\psi^{-1}{}^*\left[\alpha(\psi^*\omega)\right]
\end{equation}
also defines a section of~$\m{E}$. Moreover,~$\alpha$ is horizontal if and only if~$\psi.\alpha$ is.
\end{lemma}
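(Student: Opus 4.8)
The plan is to exploit the naturality of all the structures involved under the biholomorphisms~$\psi\in\mrm{Aut}_0(M,J)$. The two elementary facts I will use throughout are that~$\psi^*J=J$ (so that~$\psi^*$ commutes with~$\diff$ and~$\bdiff$, and maps~$T^{1,0}M$-valued forms to~$T^{1,0}M$-valued forms), and that~$\psi$ lies in the identity component, so~$\psi^*\omega\in\Omega$ for every~$\omega\in\Omega$; in particular the expression~$\alpha(\psi^*\omega)$ makes sense. A third fact, used repeatedly, is that since~$\psi^*J=J$ one has~$g(\psi^*\omega,J)=\psi^*g(\omega,J)$, so that~$\psi\colon(M,g(\psi^*\omega,J))\to(M,g(\omega,J))$ is an isometry.

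For the first assertion I would show the more general statement that pullback by a biholomorphism preserves~$\m{E}$: if~$(\beta,\eta)\in\m{E}$ and~$\phi$ is a biholomorphism of~$(M,J)$, then~$(\phi^*\beta,\phi^*\eta)\in\m{E}$. Integrability is immediate, since~$\bdiff(\phi^*\beta)=\phi^*(\bdiff\beta)=0$. Compatibility follows from the naturality of the contraction~$g_J(\beta\,\cdot,\cdot)$ together with~$g(\phi^*\eta,J)=\phi^*g(\eta,J)$: one obtains~$g(\phi^*\eta,J)\big((\phi^*\beta)\cdot,\cdot\big)=\phi^*\big(g(\eta,J)(\beta\,\cdot,\cdot)\big)$, which is symmetric if and only if~$g(\eta,J)(\beta\,\cdot,\cdot)$ is. Applying this with~$\phi=\psi^{-1}$ to the compatible pair~$\big(\alpha(\psi^*\omega),\psi^*\omega\big)$, and using~$(\psi^{-1})^*\psi^*\omega=\omega$, shows that~$\big((\psi.\alpha)(\omega),\omega\big)\in\m{E}$ for every~$\omega$, so~$\psi.\alpha$ is indeed a section of~$\m{E}$.

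For the second assertion I would prove the equivariance identity
\begin{equation}
\big(D_\varphi(\psi.\alpha)\big)(\omega)=\psi^{-1}{}^*\Big[\big(D_{\varphi\circ\psi}\alpha\big)(\psi^*\omega)\Big],
\end{equation}
working term by term in the definition~\eqref{eq:connection_bundle}. For the first term, since~$\psi^*$ commutes with~$\diff\bdiff$ we have~$\psi^*(\omega+\I\diff\bdiff t\varphi)=\psi^*\omega+\I\diff\bdiff\,t(\varphi\circ\psi)$, and pulling the~$t$-derivative through~$\psi^{-1}{}^*$ yields precisely~$\psi^{-1}{}^*$ of the first term of~$(D_{\varphi\circ\psi}\alpha)(\psi^*\omega)$. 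For the second term I would use that~$\psi^{-1}{}^*$ commutes with~$\bdiff$ and that contraction is natural, reducing the claim to~$(\psi^{-1})^*\big(\alpha(\psi^*\omega)(W)\big)=\big((\psi.\alpha)(\omega)\big)\big((\psi^{-1})^*W\big)$ with~$W=(\diff(\varphi\circ\psi))^{\sharp_{g(\psi^*\omega,J)}}$. The point to check is that~$(\psi^{-1})^*W=(\diff\varphi)^{\sharp_{g(\omega,J)}}$, which is exactly the naturality of the gradient under the isometry~$\psi$.

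Once the equivariance identity is established the conclusion is formal: the maps~$\varphi\mapsto\varphi\circ\psi$ on~$\m{C}^\infty(M)$ and~$\omega\mapsto\psi^*\omega$ on~$\Omega$ are bijections, so~$D_\varphi(\psi.\alpha)$ vanishes for all~$\varphi,\omega$ if and only if~$D\alpha$ vanishes identically, i.e.~$\psi.\alpha$ is horizontal if and only if~$\alpha$ is. The main technical obstacle is the careful bookkeeping of pullback conventions in the gradient term~$\alpha(\diff\varphi)^{\sharp}$, where the metric-dependent sharp operator forces one to verify that~$\psi$ intertwines the sharp operators of~$g(\psi^*\omega,J)$ and~$g(\omega,J)$; everything else reduces to the naturality of~$\bdiff$, of~$\diff\bdiff$, and of contraction.
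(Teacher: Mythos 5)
Your proposal is correct and follows essentially the same route as the paper: the paper also deduces the first claim from the fact that pullback by a biholomorphism in $\mrm{Aut}_0$ preserves the class $\Omega$ and preserves compatibility, and disposes of the second claim by noting that pullback by a holomorphic map preserves the connection~\eqref{eq:connection_bundle}, which is exactly the equivariance identity you prove in detail. Your write-up simply fills in what the paper leaves implicit (in particular the verification that the sharp operators are intertwined by the isometry $\psi$), and your justification that $\psi^*\omega\in\Omega$ via triviality of the action on cohomology is a harmless variant of the paper's appeal to $\psi=\exp(X)$ with $X$ admitting a holomorphy potential.
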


\begin{proof}
First notice that the pullback by any~$\psi\in\mrm{Aut}_0$ preserves the class~$\Omega$, as~$\psi=\mrm{exp}(X)$ for a vector field~$X$ that has a holomorphy potential. The first claim follows easily from this: if~$\alpha\in\m{A}^{0,1}(T^{1,0}M)$ is compatible with~$\omega\in\Omega$, then~$\psi^*\alpha$ will also be compatible with~$\psi^*\omega$.

To prove the second claim, just notice that the pullback by a holomorphic map preserves the connection~\eqref{eq:connection_bundle} on~$\m{E}$.
\end{proof}

Lemma~\ref{lemma:hol_pullbak_connection} tells us that~$\alpha\mapsto\psi.\alpha$ is a~$\mrm{Aut}_0(M,J)$-action on the space of Higgs fields. We can now define the automorphism group that is most relevant for the study of system~\eqref{eq:system_mm_implicit}.
\begin{definition}
For any Higgs field~$\alpha$, we let
\begin{equation}\label{eq:Aut_alpha_def}
\begin{gathered}
\mrm{Aut}_\alpha(M,J)=\set*{\psi\in\mrm{Aut}_0(M,J)\tc\psi.\alpha=\alpha}\\
\f{h}_\alpha=\mrm{Lie}(\mrm{Aut}_\alpha(M,J))\subseteq\f{h}_0
\end{gathered}
\end{equation}
\end{definition}
We will describe~$\f{h}_\alpha$ more explicitly in Lemma~\ref{lem:h_alpha}. As the complex structure~$J$ on~$M$ is usually fixed, we will write just~$\mrm{Aut}_\alpha$ instead of~$\mrm{Aut}_\alpha(M,J)$. Notice that if~$\alpha=0$, we recover the reduced automorphism group~$\mrm{Aut}_0$ and Lie algebra~$\f{h}_0$.

\begin{rmk}\label{rmk:aut_alpha}
By the uniqueness of horizontal lifts, if two horizontal sections of~$\m{E}$ coincide at any point, they will coincide everywhere. Then, for~$\psi\in\mrm{Aut}_0$ and a Higgs field~$\alpha$, we can check if~$\psi\in\mrm{Aut}_\alpha$ at just one point:~$f.\alpha=\alpha$ if and only if~$\psi.\alpha(\omega)=\alpha(\omega)$ for some~$\omega\in\Omega$.
\end{rmk}

\begin{rmk}
For~$(J,\alpha)\in\cotJ$, consider the stabilizer~$\m{G}_{J,\alpha}$ of~$(J,\alpha)$ in the group of Hamiltonian diffeomorphisms. A diffeomorphism~$\psi\in\m{G}$ is in~$\m{G}_{J,\alpha}$ if and only if~$\psi^*J=J$ and~$\psi^*\alpha=\alpha$. As~$\psi$ is a symplectomorphism,~$\psi^*\alpha=\alpha$ if and only if~$\psi.\alpha=\alpha$ (see the definition of~$\psi.\alpha$ in Lemma~\ref{lemma:hol_pullbak_connection}). Hence~$\m{G}_{J,\alpha}\subseteq\mrm{Aut}_\alpha$, so~$\mrm{Aut}_\alpha$ contains the complexification of~$\m{G}_{J,\alpha}$. When a solution of system~\eqref{eq:system_mm_implicit} exists, we expect~$\mrm{Aut}_\alpha$ to coincide with the complexification of~$\m{G}_{J,\alpha}$; this generalization of Matsushima's criterion should be just a formal consequence of the properties of moment maps.
\end{rmk}

We can characterize the Lie algebra~$\f{h}_\alpha$ in terms of holomorphy potentials.
\begin{lemma}\label{lem:h_alpha}
Fix a Higgs field~$\alpha$ and a holomorphic vector field~$X\in\f{h}_0$. For a K\"ahler form~$\omega$, let~$h(X,\omega)\in\m{C}^\infty(M,\bb{C})$ be the holomorphy potential of~$X$. Then~$X\in\f{h}_\alpha$ if and only if~$\m{L}_{X^{\omega}_{h(X,\omega)}}\alpha(\omega)\in\mrm{End}(T^*_{\bb{C}}M)$ commutes with~$J$, for some (hence any)~$\omega\in\Omega$.
\end{lemma}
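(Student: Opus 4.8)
The plan is to reduce the statement to an infinitesimal identity at a single K\"ahler form. Write $\psi_t=\exp(tX)\subset\mrm{Aut}_0(M,J)$ for the one-parameter subgroup generated by~$X$; by standard Lie theory $X\in\f{h}_\alpha$ if and only if the fundamental vector field of the $\mrm{Aut}_0$-action of Lemma~\ref{lemma:hol_pullbak_connection} vanishes at~$\alpha$, i.e. $\diff_{t=0}\left(\psi_t.\alpha\right)=0$. Since both $\alpha$ and each $\psi_t.\alpha$ are horizontal sections of~$\m{E}$ (Lemma~\ref{lemma:hol_pullbak_connection}), the uniqueness of horizontal lifts recorded in Remark~\ref{rmk:aut_alpha} lets me test this derivative at any single $\omega\in\Omega$; the same uniqueness will account for the ``some (hence any)'' in the statement.

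Next I would differentiate $\left(\psi_t.\alpha\right)(\omega)=\psi_t^{-1}{}^*\left[\alpha(\psi_t^*\omega)\right]$ by the Leibniz rule, splitting it into the pullback contribution $-\m{L}_{\Xi}\alpha(\omega)$, where $\Xi$ is the real holomorphic field underlying~$X$, and the horizontal-transport contribution coming from the path $t\mapsto\psi_t^*\omega$ in~$\Omega$. The velocity of this path is $\m{L}_\Xi\omega$, and since $\m{L}_{\nabla^{1,0}h}\omega=\I\diff\bdiff h$ for the holomorphy potential $h=h(X,\omega)$, this velocity is $\I\diff\bdiff$ of an explicit potential; feeding it into the horizontal lift equation~\eqref{eq:hor_lift} produces the transport term as a $\bdiff\left(\alpha(\diff\,\cdot)^{\sharp}\right)$ expression.

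The heart of the argument is then to recognise the resulting sum as the $\alpha$-slot of the ``complex'' fundamental vector field $\bm{I}\hat{h}$ appearing in the distribution~\eqref{eq:distribuzione}. Using the pointwise identity $\m{L}_{JX}\beta+\m{L}_{\beta(X)}J^\transpose=(\m{L}_X\beta)J^\transpose+(\m{L}_XJ^\transpose)\beta$ from the proof of Proposition~\ref{prop:complessificazione_cotJ} with $\beta=\alpha$ and $X=X^\omega_h$, together with the compatibility/horizontality relation $\tfrac12\m{L}_{\alpha(X^\omega_h)}J^\transpose=\bdiff\left(\alpha(\nabla^{0,1}h)\right)$ established there, the term $\m{L}_{\alpha(X^\omega_h)}J^\transpose$ and the transport term combine so that
\[
\diff_{t=0}\left(\psi_t.\alpha\right)(\omega)=-\left[(\m{L}_{X^\omega_h}\alpha)J^\transpose+(\m{L}_{X^\omega_h}J^\transpose)\alpha\right]+(\text{horizontal correction}).
\]
I expect the genuinely delicate point to be showing that the leftover correction kills exactly the part of this expression that \emph{commutes} with~$J$: writing $\beta=\m{L}_{X^\omega_h}\alpha$ and $C=\m{L}_{X^\omega_h}J^\transpose$, both $\alpha$ and $C$ anti-commute with $J^\transpose$ (the latter by differentiating $J^2=-\mbf{1}$), so $C\alpha$ commutes with $J^\transpose$, and a short type-decomposition argument shows that the full derivative reduces to the $J^\transpose$-anti-commuting part of $\beta\,J^\transpose$.

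Finally I would read off the conclusion: the derivative vanishes precisely when the anti-commuting-with-$J$ part of $\m{L}_{X^\omega_h}\alpha$ is zero, that is, when $\m{L}_{X^\omega_h}\alpha\in\mrm{End}(T^*_{\bb{C}}M)$ commutes with~$J$. The main obstacle is the tensorial bookkeeping of the previous paragraph---keeping the conventions for $\Xi$ versus $JX^\omega_h$, the factors of $\tfrac12$, and the transposes consistent while verifying that the horizontal correction cancels the commuting part---rather than any conceptual difficulty; the reduction and the type decomposition are routine once that cancellation is in hand.
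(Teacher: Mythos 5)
Your proposal is sound and completable, but it is not the paper's argument. The two coincide in the reduction: the paper also characterizes $X\in\f{h}_\alpha$ by differentiating $\varphi_t^*\alpha=\alpha(\varphi_t^*\omega)$ at $t=0$ and computing the transport term from~\eqref{eq:hor_lift}, arriving at the criterion $\m{L}_X\alpha=\bdiff\left(\alpha(\diff h)^\sharp\right)$ at a single $\omega$. After that, however, the paper finishes by brute force: it lists the components of $\m{L}_Y\alpha$ in holomorphic coordinates, substitutes $Y=X^\omega_h=J\nabla h$, and observes that the commutation condition and the criterion above are literally the same displayed equation~\eqref{eq:h_alpha}. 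Your coordinate-free finish via the identity from Proposition~\ref{prop:complessificazione_cotJ} and a commutant decomposition is genuinely different, and since you flag it as the uncertain point, here is confirmation that it works, together with the bookkeeping you must do. Write $h=u+\I v$; the generator of $\psi_t$ is the real holomorphic field $\Xi=\nabla u+X^\omega_v$, whereas $-JX^\omega_h=\nabla h=\Xi+2\I\nabla^{0,1}v$; moreover only $u$ moves the K\"ahler form, $\m{L}_\Xi\omega=2\I\diff\bdiff u$, so the transport term is $2\bdiff\left(\alpha(\nabla^{0,1}u)\right)$, while the horizontality relation (valid verbatim for complex $h$ by linearity) reads $\m{L}_{\alpha(X^\omega_h)}J^\transpose=2\bdiff\left(\alpha(\nabla^{0,1}h)\right)$. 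Assembling these with the identity gives, up to overall sign conventions,
\begin{equation}
\diff_{t=0}\left(\psi_t.\alpha\right)(\omega)=\left[(\m{L}_{X^\omega_h}\alpha)J^\transpose+(\m{L}_{X^\omega_h}J^\transpose)\alpha\right]+2\I\left[\m{L}_{\nabla^{0,1}v}\alpha-\bdiff\left(\alpha(\nabla^{0,1}v)\right)\right],
\end{equation}
and the second bracket commutes with $J$: for the $(0,1)$-vector field $\nabla^{0,1}v$ the component formulas show that the $\m{A}^{0,1}(T^{1,0}M)$-part of $\m{L}_{\nabla^{0,1}v}\alpha$ is exactly $\bdiff\left(\alpha(\nabla^{0,1}v)\right)$, so the difference is purely of diagonal type. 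On the other hand, since $\Xi$ is real holomorphic, $\m{L}_\Xi$ preserves types, so the left-hand side lies in $\m{A}^{0,1}(T^{1,0}M)$ and anti-commutes with $J$. Taking $J$-anti-commuting parts of the display therefore yields exactly your claimed reduction---the derivative equals the anti-commuting part of $(\m{L}_{X^\omega_h}\alpha)J^\transpose$---and the lemma follows because $J^\transpose$ is invertible. As for what each approach buys: the paper's coordinate check is shorter and handles complex potentials with no case analysis, while your route costs the real/imaginary bookkeeping above (note the asymmetry: the transport term sees only $\Re h$, the target $\m{L}_{X^\omega_h}\alpha$ sees all of $h$) but is more structural, exhibiting the fundamental vector field of the $\mrm{Aut}_0$-action on Higgs fields as the $\alpha$-slot of the complex direction of the distribution~\eqref{eq:distribuzione} modulo a $J$-commuting error, which is the same mechanism that later powers Lemma~\ref{lem:Futaki}.
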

\begin{proof}
Fix a K\"ahler form~$\omega$, and let for simplicity~$\alpha=\alpha(\omega)$,~$h=h(X,\omega)$. For any vector field~$Y$, we have
\begin{equation}
\begin{split}
\left(\m{L}_Y\alpha\right)\indices{_b^a}=&\alpha\indices{_{\bar{c}}^a}\diff_bY^{\bar{c}};\\
\left(\m{L}_Y\alpha\right)\indices{_{\bar{b}}^a}=&\diff_{\bar{b}}\left(\alpha\indices{_{\bar{c}}^a}Y^{\bar{c}}\right)+Y^c\diff_c\alpha\indices{_{\bar{b}}^a}-\alpha\indices{_{\bar{b}}^c}\diff_cY^a;\\
\left(\m{L}_Y\alpha\right)\indices{_b^{\bar{a}}}=&0;\\
\left(\m{L}_Y\alpha\right)\indices{_{\bar{b}}^{\bar{a}}}=&-\alpha\indices{_{\bar{b}}^c}\diff_cY^{\bar{a}}.
\end{split}
\end{equation}
Substituting~$X^\omega_h=J\nabla h$ to~$Y$, we see that~$\m{L}_{X^\omega_h}\alpha$ commutes with~$J$ if and only if
\begin{equation}\label{eq:h_alpha}
-\I\diff_{\bar{b}}\left(\alpha\indices{_{\bar{c}}^a}\nabla^{\bar{c}}h\right)+\I\left(\nabla^ch\diff_c\alpha\indices{_{\bar{b}}^a}-\alpha\indices{_{\bar{b}}^c}\diff_c\nabla^ah\right)=0.
\end{equation}
Now, let~$\varphi_t\in\mrm{Aut}_0$ be the exponential of~$X$. By Remark~\ref{rmk:aut_alpha},~$\varphi_t$ is an element of~$\mrm{Aut}_{\alpha}$ if and only if
\begin{equation}
\varphi_t^*\alpha=\alpha(\varphi_t^*\omega)
\end{equation}
and taking the derivative at~$t=0$ we find that~$X\in\f{h}_\alpha$ if and only if
\begin{equation}
\m{L}_X\alpha=\diff_t\Bigr|_{t=0}\alpha(\varphi_t^*\omega)=\bdiff\left(\alpha(\diff h)^\sharp\right).
\end{equation}
In terms of local holomorphic coordinates, this is written as
\begin{equation}
\nabla^ch\diff_c\alpha\indices{_{\bar{b}}^a}-\alpha\indices{_{\bar{b}}^c}\diff_c\nabla^ah=\diff_{\bar{b}}\left(\alpha\indices{_{\bar{c}}^d}\diff_dh g^{a\bar{c}}\right)
\end{equation}
that is precisely~\eqref{eq:h_alpha}.
\end{proof}

By analogy with the cscK case, we define a character of~$\mrm{Aut}_\alpha$ that is the analogue of a Futaki invariant for the equation~$\f{m}_{\bm{\Omega}}=0$.
\begin{definition}
Let~$\Omega$ be a K\"ahler class on~$M$, and let~$\alpha$ be a horizontal section of~$\m{E}\to\Omega$. For~$\omega\in\Omega$ and~$X\in\f{h}_\alpha$, let~$h(\omega,X)$ be the holomorphy potential of~$X$ with respect to~$\omega$. We define the \emph{Futaki invariant of system~\eqref{eq:system_mm_implicit}} as
\begin{equation}
\m{F}_\alpha(X)=\int_Mh(\omega,X)\,\f{m}_{\bm{\Omega}}\left(\omega,J,\alpha(\omega)\right)\frac{\omega^n}{n!}.
\end{equation}
\end{definition}
We prove that~$\m{F}_\alpha$ is an invariant of the K\"ahler class adapting the proof in~\cite{Bourguignon_Futaki} of the analogous result for the cscK equation.
\begin{prop}\label{prop:Futaki_vanish}
For every~$X\in\f{h}_\alpha$,~$\m{F}_\alpha(X)$ does not depend on the choice of~$\omega\in\Omega$. In particular, if there is a solution to~$\f{m}_{\bm{\Omega}}=0$ then~$\m{F}_\alpha\equiv 0$.
\end{prop}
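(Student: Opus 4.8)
The plan is to show that $\m{F}_\alpha(X)$ has vanishing derivative as $\omega$ ranges over the class $\Omega$; since $\Omega$ is convex it suffices to differentiate along a segment $\omega_t=\omega+\I\diff\bdiff t\varphi$. I would begin by splitting the moment map as in Section~\ref{sec:energy}, writing $\f{m}_{\bm{\Omega}}(\omega,J,\alpha(\omega))=(s(\omega)-\hat{s})+\f{m}(\omega,\alpha(\omega))$, so that
\[
\m{F}_\alpha(X)=\int_M h(\omega,X)\,(s(\omega)-\hat{s})\,\frac{\omega^n}{n!}+\int_M h(\omega,X)\,\f{m}(\omega,\alpha(\omega))\,\frac{\omega^n}{n!}.
\]
The first summand is exactly the classical Futaki invariant evaluated on $X\in\f{h}_\alpha\subseteq\f{h}_0$, whose independence of $\omega$ is Futaki's theorem; for it I would simply invoke Bourguignon's computation (differentiate the three factors $h$, $s$, $\tfrac{\omega^n}{n!}$ and integrate by parts twice, using that holomorphicity of $X$ means $\m{D}h=0$). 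The whole problem then reduces to the second, genuinely new, term.

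My key observation is that this new term is identically zero when $X\in\f{h}_\alpha$. By Theorem~\ref{thm:variational} the functional $\m{H}_\alpha(\omega)=\tfrac12\int_M f(\bar{\alpha}(\omega)\alpha(\omega))\tfrac{\omega^n}{n!}$ is a primitive of $-\f{m}(\omega,\alpha(\omega))$, that is $\diff_\omega\m{H}_\alpha(\psi)=-\langle\psi,\f{m}(\omega,\alpha(\omega))\rangle$ for every potential $\psi$. I would then prove that $\m{H}_\alpha$ is invariant under the $\mrm{Aut}_\alpha$-action on $\Omega$: for $\rho\in\mrm{Aut}_\alpha$ the defining relation $\rho.\alpha=\alpha$ of Lemma~\ref{lemma:hol_pullbak_connection} reads $\alpha(\rho^*\omega)=\rho^*\alpha(\omega)$, whence $\bar{\alpha}(\rho^*\omega)\alpha(\rho^*\omega)=\rho^*(\bar{\alpha}(\omega)\alpha(\omega))$ as a bundle endomorphism. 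Since $f$ is a spectral function it commutes with pullback, and the integral of a top form is diffeomorphism-invariant, so $\m{H}_\alpha(\rho^*\omega)=\m{H}_\alpha(\omega)$.

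To conclude I would identify the derivative of $\m{H}_\alpha$ along the complexified orbit of $X$ with the pairings appearing in the new term. The subalgebra $\f{h}_\alpha$ is complex, since the condition in Lemma~\ref{lem:h_alpha} is preserved under $X\mapsto\I X$; hence both real holomorphic fields $X+\bar{X}$ and $\I(X-\bar{X})$ lie in $\f{h}_\alpha$, and their flows move $\omega$ inside $\Omega$ along the directions $\I\diff\bdiff\,\Re h$ and $\I\diff\bdiff\,\Im h$, where $h=h(\omega,X)$. Invariance of $\m{H}_\alpha$ forces its derivative in both directions to vanish, giving $\langle\Re h,\f{m}(\omega,\alpha)\rangle=\langle\Im h,\f{m}(\omega,\alpha)\rangle=0$ and therefore $\int_M h\,\f{m}(\omega,\alpha)\tfrac{\omega^n}{n!}=0$ at every $\omega$. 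Thus $\m{F}_\alpha(X)$ coincides with the classical Futaki invariant and is independent of $\omega$; evaluating at a point where $\f{m}_{\bm{\Omega}}=0$ then gives $\m{F}_\alpha\equiv 0$.

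The step I expect to demand the most care is the identification of $\Re h$ and $\Im h$ with the potentials of the two orbit directions, together with the verification that it is $X\in\f{h}_\alpha$, and not merely $X\in\f{h}_0$, that makes $\m{H}_\alpha$ invariant. Equivalently, if one preferred the direct differentiation à la Bourguignon without splitting off $\m{H}_\alpha$, the main obstacle becomes the integration by parts of the $\alpha$-dependent divergence term in $\f{m}_{\bm{\Omega}}$, whose cancellation is governed precisely by the characterization of $\f{h}_\alpha$ in Lemma~\ref{lem:h_alpha}.
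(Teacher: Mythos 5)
Your proof is correct, but it takes a genuinely different route from the paper's. The paper does not split $\f{m}_{\bm{\Omega}}$ in this proposition at all: it works with the full $1$-form $\left(\sigma_\alpha\right)_\omega(\varphi)=-\left\langle\varphi,\f{m}_{\bm{\Omega}}(\omega,J,\alpha(\omega))\right\rangle$, proves that $\sigma_\alpha$ is invariant under the $\mrm{Aut}_\alpha$-action on $\Omega$ (by the same naturality computation $\f{m}_{\bm{\Omega}}(\psi^*\omega,J,\alpha(\psi^*\omega))=\psi^*\f{m}_{\bm{\Omega}}(\omega,J,\alpha(\omega))$ that you use for $\m{H}_\alpha$), and then combines invariance with the closedness of $\sigma_\alpha$ (Proposition~\ref{prop:sigma_exact}) through Cartan's formula, $0=\m{L}_{\hat{X}}\sigma_\alpha=\mrm{d}\left(\sigma_\alpha(\hat{X})\right)$, so that $\sigma_\alpha(\hat{X})=-\m{F}_\alpha(X)$ is constant on $\Omega$. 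This treats the scalar-curvature and Higgs terms uniformly and never appeals to the classical Futaki theorem. Your route instead peels off $\m{F}_0$ (handled by citing Futaki/Bourguignon) and kills the remaining term by showing it vanishes identically at \emph{every} $\omega$, using invariance of the primitive $\m{H}_\alpha$ together with the observation that $\f{h}_\alpha$ is a complex subalgebra. What this buys: your argument proves strictly more, namely $\m{F}_\alpha=\m{F}_0$ on $\f{h}_\alpha$, which is precisely the paper's subsequent Lemma~\ref{lem:Futaki} — there obtained by a pointwise algebraic cancellation ($\m{L}_{X^\omega_h}\alpha$ commutes with $J$ while $\bar{\alpha}$ anticommutes, so the relevant trace vanishes) rather than by your integrated variational argument. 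What it costs: the classical Futaki theorem enters as a black box, and you need the extra verification that $X\in\f{h}_\alpha$ implies $\I X\in\f{h}_\alpha$; you correctly flag this step, and it does hold, since the condition of Lemma~\ref{lem:h_alpha} is $\bb{C}$-linear in the holomorphy potential $h$. (The paper's own proof quietly relies on a comparable fact when it identifies the infinitesimal action of $X\in\f{h}_\alpha$ on $\Omega$ with the complex-valued potential $h(X,\omega)$, so this is not a real disadvantage of your approach.)
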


\begin{proof}
Consider the action by pull-backs of~$\mrm{Aut}_0$ on the K\"ahler class~$\Omega$. We claim that the~$1$-form~$\sigma_\alpha$ defined in Section~\ref{sec:energy} is invariant for the induced action of~$\mrm{Aut}_\alpha$.

Assuming this for the moment, the conclusion follows easily from Proposition~\ref{prop:sigma_exact}: the invariance of~$\sigma_\alpha$ implies that~$\m{L}_X\sigma_\alpha=0$ for every~$X\in\f{h}_0$. As~$\sigma_\alpha$ is closed, this implies that~$\sigma_\alpha(\hat{X})$ is constant, where~$\hat{X}$ denotes the infinitesimal action of~$X$ on~$\Omega$. It can be easily checked that this infinitesimal action is precisely~$\hat{X}_\omega=h(X,\omega)$, hence
\begin{equation}
\sigma_\alpha(\hat{X})=-\int_Mh(X,\omega)\,\f{m}_{\bm{\Omega}}(\omega,J,\alpha)\frac{\omega^n}{n!}=-\m{F}_\alpha(X)
\end{equation}
is a constant function of~$\omega$.

It remains to prove the invariance of~$\sigma_\alpha$. Denote by~$k$ the action~$\mrm{Aut}_\alpha\curvearrowright\Omega$, i.e.~$k_\psi(\omega)\coloneqq \psi^*\omega$. We want to show that, for every biholomorphism~$\psi\in\mrm{Aut}_\alpha$, K\"ahler form~$\omega$, and~$\varphi\in\m{C}^\infty(M,\bb{R})$,
\begin{equation}
\left(k_\psi^*\sigma_\alpha\right)_\omega(\varphi)=\left(\sigma_\alpha\right)_\omega(\varphi).
\end{equation}
Recall that we identify vector fields on~$\Omega$ with functions on~$M$, via the~$\diff\bdiff$-Lemma. As every~$\psi\in\mrm{Aut}_\alpha$ is a biholomorphism, the differential of the action of~$\psi$ on~$\Omega$ is
\begin{equation}
\left(\mrm{d}k_\psi\right)_\omega(\varphi)=\psi^*\varphi\in T_{\psi^*\omega}\Omega.
\end{equation}
We can then compute
\begin{equation}
\left(k_\psi^*\sigma_\alpha\right)_\omega(\varphi)=\left(\sigma_\alpha\right)_{\psi^*\omega}(\psi^*\varphi)=-\int_M\psi^*\varphi\,\f{m}_{\bm{\Omega}}(\psi^*\omega,J,\alpha(\psi^*\omega))\frac{\psi^*\omega^n}{n!}.
\end{equation}
The key observation is the same used in the proof of Proposition~\ref{prop:sigma_exact}:
\begin{equation}
\f{m}_{\bm{\Omega}}(\psi^*\omega,J,\alpha(\psi^*\omega))=\psi^*\left(\f{m}_{\bm{\Omega}}(\omega,\psi^{-1}{}^*J,\psi^{-1}{}^*\alpha(\psi^*\omega))\right)=\psi^*\left(\f{m}_{\bm{\Omega}}(\omega,J,\alpha(\omega))\right)
\end{equation}
where the second equality comes from~$\psi\in\mrm{Aut}_\alpha$. Then, we find
\begin{equation}
\left(k_\psi^*\sigma_\alpha\right)_\omega(\varphi)=-\int_M\psi^*\varphi\,\psi^*\left(\f{m}_{\bm{\Omega}}(\omega,J,\alpha(\omega))\right)\frac{\psi^*\omega^n}{n!}=-\int_M\varphi\,\f{m}_{\bm{\Omega}}(\omega,J,\alpha(\omega))\frac{\omega^n}{n!}
\end{equation}
which is exactly~$\left(\sigma_\alpha\right)_\omega(\varphi)$.
\end{proof}

The expression of the Futaki invariant can be greatly simplified, using the decomposition of~$\f{m}_{\bm{\Omega}}$ as the scalar curvature plus the map~$\f{m}$ defined in~\eqref{eq:moment_map_BiquardGauduchon}.
\begin{lemma}\label{lem:Futaki}
Let~$\alpha$ be a Higgs field. For any~$X\in\f{h}_\alpha$,
\begin{equation}
\m{F}_\alpha(X)=\m{F}_0(X).
\end{equation}
\end{lemma}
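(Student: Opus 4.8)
The plan is to reduce the statement to the vanishing of a single integral, using the splitting $\f{m}_{\bm{\Omega}}=\mu+\f{m}$ into the scalar-curvature part $\mu(\omega)=s(\omega,J)-\hat{s}$ and the divergence part $\f{m}(\omega,\alpha(\omega))$ introduced after Theorem~\ref{thm:variational}. Since $\m{F}_0$ is, by definition, the invariant attached to $\alpha=0$ — for which the divergence term vanishes identically — the holomorphy-potential pairing decomposes as
\begin{equation*}
\m{F}_\alpha(X)=\m{F}_0(X)+\int_M h(\omega,X)\,\f{m}(\omega,\alpha(\omega))\frac{\omega^n}{n!},
\end{equation*}
so everything reduces to showing that the second integral vanishes for every $X\in\f{h}_\alpha$.

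First I would recognise this integral through the variational characterisation of Theorem~\ref{thm:variational}. The functional $\m{H}_\alpha(\omega)=\tfrac12\int_M f(\bar\alpha(\omega)\alpha(\omega))\,\omega^n/n!$ satisfies $\mrm{d}_\omega\m{H}_\alpha(\varphi)=-\langle\varphi,\f{m}(\omega,\alpha(\omega))\rangle_{L^2(\omega)}$; extending this identity $\bb{C}$-linearly and inserting the holomorphy potential $\varphi=h(\omega,X)$, which (as recorded in the proof of Proposition~\ref{prop:Futaki_vanish}) represents the infinitesimal action $\hat X$ of $X$ on $\Omega$, gives
\begin{equation*}
\int_M h(\omega,X)\,\f{m}(\omega,\alpha(\omega))\frac{\omega^n}{n!}=-\mrm{d}_\omega\m{H}_\alpha(\hat X).
\end{equation*}
Thus it suffices to prove that $\m{H}_\alpha$ is invariant under the $\mrm{Aut}_\alpha$-action on $\Omega$.

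The key step is this invariance. For $\psi\in\mrm{Aut}_\alpha$ the defining relation $\psi.\alpha=\alpha$ reads $\alpha(\psi^*\omega)=\psi^*(\alpha(\omega))$, and since $\psi$ is a biholomorphism it pulls back the K\"ahler metric as $g(J,\psi^*\omega)=\psi^*g(J,\omega)$. Hence the self-adjoint endomorphism $\bar\alpha\alpha$, and therefore the spectral density $f(\bar\alpha\alpha)$, transforms naturally, $f\!\left(\overline{\alpha(\psi^*\omega)}\,\alpha(\psi^*\omega)\right)=\psi^*\!\left(f(\bar\alpha(\omega)\alpha(\omega))\right)$. Changing variables in the integral then yields $\m{H}_\alpha(\psi^*\omega)=\m{H}_\alpha(\omega)$. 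Differentiating this identity along the one-parameter subgroups generated by $X\in\f{h}_\alpha$ — exactly as $\sigma_\alpha$ was shown to be $\mrm{Aut}_\alpha$-invariant in Proposition~\ref{prop:Futaki_vanish} — gives $\mrm{d}_\omega\m{H}_\alpha(\hat X)=0$, whence $\m{F}_\alpha(X)=\m{F}_0(X)$.

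The main obstacle I anticipate is the bookkeeping around the complex-valued holomorphy potential: the derivative identity from Theorem~\ref{thm:variational} is stated for real variations $\varphi$, and one must check that extending it $\bb{C}$-linearly and pairing with the complex potential $h(\omega,X)$ is compatible with differentiating the invariance of $\m{H}_\alpha$ along \emph{both} the real and the imaginary parts of the complexified infinitesimal action — that is, that the genuinely K\"ahler-class-moving (imaginary) direction generated by $X$ also lies in $\mrm{Aut}_\alpha$, so that the invariance computation above applies to it. This is the same point used implicitly for $\sigma_\alpha$ in Proposition~\ref{prop:Futaki_vanish}, and I would handle it by invoking the naturality of $\m{H}_\alpha$ under the full (complexified) $\mrm{Aut}_\alpha$-action rather than only under its Hamiltonian part.
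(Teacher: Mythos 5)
Your route is genuinely different from the paper's. The paper also starts from the splitting $\f{m}_{\bm{\Omega}}=\mu+\f{m}$, but then uses the pairing formula~\eqref{eq:moment_map_BiquardGauduchon} to write the correction term as $\tfrac12\int_M\mrm{d}^cf\left(\m{L}_{X^\omega_h}\alpha(\omega)\right)\tfrac{\omega^n}{n!}$ and shows that this integrand vanishes \emph{pointwise}: by Lemma~\ref{lem:h_alpha} the endomorphism $\m{L}_{X^\omega_h}\alpha(\omega)$ commutes with $J$, while $\bar{\alpha}(\omega)$ anti-commutes with $J$ and $f'(\bar{\alpha}\alpha)$ commutes with it, so the relevant trace is that of a matrix anti-commuting with $J$, hence zero. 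You instead work at the level of the energy functional: your steps reducing the claim to $\mrm{d}_\omega\m{H}_\alpha(\hat{X})=0$ via Theorem~\ref{thm:variational}, and proving the invariance $\m{H}_\alpha(\psi^*\omega)=\m{H}_\alpha(\omega)$ for $\psi\in\mrm{Aut}_\alpha$ by change of variables, are both correct, and the invariance argument is the same one used for $\sigma_\alpha$ in Proposition~\ref{prop:Futaki_vanish}.

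However, the step you flag at the end is a genuine gap, and your proposed fix does not close it. Differentiating the invariance along the flow of (the real field underlying) $X\in\f{h}_\alpha$ only yields $\mrm{d}_\omega\m{H}_\alpha\left(\Re\,h(X,\omega)\right)=0$, since that flow moves $\omega$ inside $\Omega$ in the direction of the \emph{real part} of the holomorphy potential. To kill the imaginary part you must differentiate along the flow of $JX$, and that flow lies in $\mrm{Aut}_\alpha$ precisely when $\I X\in\f{h}_\alpha$, i.e.\ when $\f{h}_\alpha$ is a \emph{complex} Lie subalgebra of $\f{h}_0$. ``Invoking the naturality of $\m{H}_\alpha$ under the full complexified $\mrm{Aut}_\alpha$-action'' presupposes exactly this: $\mrm{Aut}_\alpha$ is defined as a stabilizer inside $\mrm{Aut}_0$, and a closed subgroup of a complex Lie group need not be a complex subgroup, so closure under $\I$ must be proved, not assumed. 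Fortunately it follows from the paper's Lemma~\ref{lem:h_alpha}: the characterizing condition~\eqref{eq:h_alpha} is $\bb{C}$-linear in the holomorphy potential (replacing $h$ by $\I h$ multiplies the left-hand side by $\I$), hence $X\in\f{h}_\alpha$ implies $\I X\in\f{h}_\alpha$. With that one observation added, your differentiation applies to both flows, gives $\mrm{d}_\omega\m{H}_\alpha(\Re h)=\mrm{d}_\omega\m{H}_\alpha(\Im h)=0$, and the proof is complete. Note that the paper's trace argument sidesteps this real/imaginary bookkeeping entirely, because it is manifestly $\bb{C}$-linear in $h$; that is what your more conceptual route trades away.
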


\begin{proof}
The aforementioned decomposition of~$\f{m}_{\bm{\Omega}}$ induces a decomposition of~$\m{F}_\alpha$ as
\begin{equation}
\begin{split}
\m{F}_\alpha(X)=&\int_Mh(X,\omega)\left(s(\omega)-\hat{s}\right)\frac{\omega^n}{n!}+\left\langle\f{m}\left(J,\alpha(\omega)\right),h(X,\omega)\right\rangle_{L^2(\omega)}=\\
=&\m{F}_0(X)+\frac{1}{2}\int_M\mrm{d}^cf\left(\m{L}_{X^\omega_h}\alpha(\omega)\right)\frac{\omega^n}{n!}.
\end{split}
\end{equation}
We must show that the integral of~$\mrm{d}^cf\left(\m{L}_{X^\omega_h}\alpha(\omega)\right)$ vanishes. We will actually show that the function~$\mrm{d}^cf\left(\m{L}_{X^\omega_h}\alpha(\omega)\right)$ is itself zero.

Taking the derivative of the spectral function~$f$, we have
\begin{equation}
\mrm{d}^cf\left(\m{L}_{X^\omega_h}\alpha(\omega)\right)=-2\Re\,\mrm{Tr}\left(f'(\bar{\alpha}(\omega)\alpha(\omega))\bar{\alpha}(\omega)\left(\m{L}_{X^\omega_h}\alpha(\omega)\right)J^\transpose\right).
\end{equation}
As~$X\in\f{h}_\alpha$, Lemma~\ref{lem:h_alpha} tells us that~$\m{L}_{X^\omega_h}\alpha(\omega)$ commutes with~$J$; since~$\bar{\alpha}(\omega)$ anti-commutes with~$J$ while~$f'(\bar{\alpha}(\omega)\alpha(\omega))$ commutes with it,
\begin{equation}
\mrm{Tr}\left(f'(\bar{\alpha}(\omega)\alpha(\omega))\bar{\alpha}(\omega)\left(\m{L}_{X^\omega_h}\alpha(\omega)\right)J^\transpose\right)=0
\end{equation}
hence~$\mrm{d}^cf\left(\m{L}_{X^\omega_h}\alpha(\omega)\right)$ vanishes.
\end{proof}

\subsection{Compatible test configurations}\label{sec:stability}

In this section, we will consider K\"ahler manifolds~$(M,J,\omega)$ and their test configurations, defined as in~\cite{DervanRoss_Kstab}. The discussion can be readily adapted to the algebraic situation, where one considers polarized varieties instead of K\"ahler manifolds.

Our goal is to establish a stability condition that characterizes the existence of solutions to the real moment map equation in~\eqref{eq:complessificate_intro}. More precisely, let~$\alpha$ be a Higgs field, a horizontal section of~$\m{E}\to[\omega]$, that solves the complex moment map equation~$\m{D}^*\alpha=0$. 
Lemma~\ref{lem:Futaki} suggests that to characterize the existence of a K\"ahler form in~$[\omega]$ solving the real moment map equation in terms of an analogue of K-stability, we can use the usual notion of test configurations and Donaldson-Futaki weight. We should however consider just the subset of test configurations that are, in a suitable sense, \emph{compatible} with the Higgs field, reflecting the fact that~$\mrm{Aut}_\alpha(M)$ is, in general, strictly contained in~$\mrm{Aut}_0(M)$.

We now describe a natural notion of compatibility between test configurations and Higgs fields. First, let us recall the notion of a test configuration from~\cite{DervanRoss_Kstab}.
\begin{definition}
Let~$[\omega]$ be a K\"ahler class on the compact complex manifold~$M$. A test configuration for~$(M,[\omega])$ is a normal K\"ahler space~$(\m{M},\Omega)$ together with a~$\bb{C}^*$-action and a flat surjective map~$\pi:\m{M}\to\bb{C}$ such that
\begin{enumerate}
\item~$\pi$ is~$\bb{C}^*$-equivariant, for the standard~$\bb{C}^*$-action on~$\bb{C}$;
\item the K\"ahler form~$\Omega$ is~$\bb{S}^1$-invariant, and the~$\bb{C}^*$-action preserves the Bott-Chern class of~$\Omega$;
\item there is a~$\bb{C}^*$-equivariant biholomorphisms~$\varphi$ from~$\pi^{-1}(\bb{C}^*)$ to~$M\times\bb{C}^*$ such that, for all~$t\in\bb{C}^*$,
$[\Omega_{\restriction\pi^{-1}(t)}]=[\varphi_t^*\omega]$.
\end{enumerate}
\end{definition}
For any such test configuration, we will denote by~$M_t$ the fibre over~$t\in\bb{C}$, and we let~$\omega_t$ be the restriction of~$\Omega$ on~$M_t$. We can assume for simplicity that~$(M_1,[\omega_1])=(M,[\omega])$, so that if~$\gamma:\bb{C}^*\to\mrm{Aut}(\m{M})$ denotes the~$\bb{C}^*$-action we have that for all~$t\not=0$,~$\gamma_t$ is a biholomorphism between~$M$ and~$M_t$ such that~$[\gamma_t^*\omega_t]=[\omega]$. Moreover,~$\gamma$ induces a group of automorphisms of the central fibre.

Assume now that~$\alpha$ is a Higgs field for~$(M,\omega)$, and let~$(\m{M},\Omega)$ be a test configuration. We can use the~$\bb{C}^*$-action to induce a first-order deformation of the complex structure on each fibre of~$\m{M}\to\bb{C}^*$. For~$t\not=0$, let
\begin{equation}\label{eq:alpha_testconfig}
\alpha_t=\gamma_t^{-1}{}^*\left(\alpha(\gamma_t^*\omega_t)\right)\in\m{A}^{0,1}(T^{1,0}M_t).
\end{equation}
As~$\alpha$ is a Higgs field and~$[\gamma_t^*\omega_t]=[\omega]$,~$\alpha(\gamma_t^*\omega_t)$ is well-defined and compatible with~$\gamma_t^*\omega_t$, so~$\alpha_t$ is a first-order deformation of~$M_t$ that is compatible with~$\omega_t$. Hence, we obtain a family of~$(0,1)$-forms on~$\m{M}^*\coloneqq \pi^{-1}(\bb{C}^*)$
\begin{equation}\label{eq:testconf_def}
\m{A}^*\coloneqq \set*{\alpha_t\tc t\in\bb{C}^*}\in\m{A}^{0,1}(T^{1,0}\m{M}^*).
\end{equation}
\begin{rmk}
Notice that~$\m{A}^*$ takes values in the vertical tangent bundle of~$\m{M}^*$, and vanishes on non-vertical vectors: in other words,~$\m{A}^*$ is a first-order deformation of the manifold~$\m{M}^*$ compatible with the map~$\pi:\m{M}^*\to\bb{C}^*$ and the K\"ahler form~$\Omega$. Notice also that~$\m{A}^*$ is an integrable deformation of~$\m{M}^*$, as~$\alpha$ is integrable and the~$1$PS~$\gamma$ is holomorphic. The~$(0,1)$-forms satisfying these properties have been studied in the deformation theory for holomorphic fibrations. We refer the interested reader to~\cite{Horikawa_deformations1, Horikawa_deformations2} for more details on the deformation theory of fibrations. 
\end{rmk}

\begin{definition}\label{def:compatible_testconfig}
Let~$(\m{M},\Omega)$ be a test configuration for~$(M,\omega)$, and for a Higgs term~$\alpha$ consider the first-order deformation~$\m{A}^*$ of~$(\m{M}^*,\Omega_{|\m{M}^*})$ defined by~$\alpha$ and the~$\bb{C}^*$-action as in~\eqref{eq:testconf_def},~\eqref{eq:alpha_testconfig}. We say that~$(\m{M},\Omega)$ is \emph{compatible} with~$\alpha$ if~$\m{A}^*$ can be extended to a first-order deformation~$\m{A}$ of~$(\m{M},\Omega)$.
\end{definition}

This definition is motivated by Lemma~\ref{lem:Futaki}: we want to consider test-configurations for which the~$\bb{C}^*$-action induced on the central fibre is compatible with the Higgs field~$\alpha$. The next result shows that Definition~\ref{def:compatible_testconfig} achieves this goal. 
\begin{lemma}\label{lem:smooth_compatible_testconf}
Let~$(\m{M},\Omega)$ be a smooth test configuration with a smooth central fibre, compatible with a Higgs field~$\alpha$. Let~$\gamma:\bb{C}^*\to\mrm{Aut}(\m{M})$ be the~$\bb{C}^*$-action, and let~$\alpha_0$ be the extension of~$\m{A}^*$ to the central fibre. Then the automorphisms induced by~$\gamma$ on the central fibre belong to~$\mrm{Aut}_{\alpha_0}(M_0)$.
\end{lemma}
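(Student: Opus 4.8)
The plan is to verify the statement at the level of Lie algebras and then exponentiate. Let $\xi$ denote the holomorphic vector field generating the $\bb{C}^*$-action $\gamma$ on $\m{M}$; it projects under $\pi$ to the Euler vector field on $\bb{C}$, which vanishes at the origin, so $\xi$ is tangent to the central fibre $M_0$ and restricts there to a vector field $\xi_0\in\f{h}_0(M_0)$ generating the induced action. Since the $\bb{C}^*$-action preserves the Bott--Chern class of $\Omega$, differentiating the relation $\gamma_s^*\Omega-\Omega=\I\diff\bdiff\phi_s$ produces a global potential $\dot\phi$ with $\m{L}_\xi\Omega=\I\diff\bdiff\dot\phi$; restricting to $M_0$, where $\xi$ is tangent, gives $\m{L}_{\xi_0}\omega_0=\I\diff\bdiff(\dot\phi|_{M_0})$, so that $h_0\coloneqq\dot\phi|_{M_0}$ is the holomorphy potential $h(\xi_0,\omega_0)$. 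By Lemma~\ref{lem:h_alpha}, and more precisely by the identity $\m{L}_X\alpha=\bdiff(\alpha(\diff h)^\sharp)$ established in its proof, it suffices to show
\begin{equation}
\m{L}_{\xi_0}\alpha_0=\bdiff\bigl(\alpha_0\,(\diff h_0)^{\sharp_{\omega_0}}\bigr),
\end{equation}
as this is exactly the condition $\xi_0\in\f{h}_{\alpha_0}$, whence the flow of $\xi_0$, i.e.\ the induced automorphisms $\gamma_s|_{M_0}$, lie in $\mrm{Aut}_{\alpha_0}(M_0)$.

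The first step is to reduce the Lie derivative on $M_0$ to one computed on the total space. Because $\m{A}$ is a vertical deformation and $\gamma$ preserves both the fibration and the fibre $M_0$, the pullbacks $\gamma_s^*\m{A}$ stay vertical, and restricting to $M_0$ gives $(\gamma_s^*\m{A})|_{M_0}=(\gamma_s|_{M_0})^*\alpha_0$. Differentiating in $s$ yields $(\m{L}_\xi\m{A})|_{M_0}=\m{L}_{\xi_0}\alpha_0$, so it is enough to compute $\m{L}_\xi\m{A}$ on $\m{M}$ and pass to the limit $t\to0$.

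The core computation takes place on $\m{M}^*$, using the explicit, equivariant definition~\eqref{eq:alpha_testconfig} of $\alpha_t$ together with the group law $\gamma_{st}=\gamma_s\circ\gamma_t$, which gives $\gamma_{st}^{-1}\circ\gamma_s=\gamma_t^{-1}$ and hence $\gamma_s^*\alpha_{st}=\gamma_t^{-1}{}^*\bigl(\alpha(\gamma_{st}^*\omega_{st})\bigr)$. By Bott--Chern invariance the forms $\gamma_{st}^*\omega_{st}$ and $\gamma_t^*\omega_t$ differ by $\I\diff\bdiff$ of a function, so the horizontal-lift equation~\eqref{eq:hor_lift} for the Higgs field $\alpha$ governs the derivative of $\alpha(\gamma_{e^\tau t}^*\omega_{e^\tau t})$ at $\tau=0$. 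Carrying $\gamma_t^{-1}{}^*$ through $\bdiff$ and through the metric sharp, both preserved because $\gamma_t$ is a biholomorphic isometry $(M_1,\gamma_t^*\omega_t)\to(M_t,\omega_t)$, collapses the expression to the fibrewise identity
\begin{equation}
(\m{L}_\xi\m{A})|_{M_t}=\bdiff\bigl(\alpha_t\,(\diff(\dot\phi|_{M_t}))^{\sharp_{\omega_t}}\bigr),\qquad t\neq0.
\end{equation}
Letting $t\to0$ and invoking smoothness of the extension $\m{A}$ up to the smooth central fibre yields the desired formula with $h_0=\dot\phi|_{M_0}$, completing the argument.

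I expect the main obstacle to be the passage to the central fibre: the clean fibrewise identity holds a priori only over $\bb{C}^*$, and one must use the assumed smoothness of $\m{A}$ and of $M_0$ both to take the limit and to identify $\dot\phi|_{M_0}$ with the holomorphy potential of $\xi_0$. A secondary technical point is the bookkeeping of real versus holomorphic generators in the Bott--Chern potential $\dot\phi$, which is harmless here since the $\bb{S}^1$-action preserves $\Omega$ and the central fibre is smooth, but which must nonetheless be handled carefully to make the identification $h_0=h(\xi_0,\omega_0)$ precise.
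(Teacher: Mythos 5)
Your argument is correct in substance, but it takes a genuinely different route from the paper's. The paper works entirely at the group level: it extends each~$\alpha_t$,~$t\neq0$, to a horizontal section of~$\m{E}_t\to[\omega_t]$, uses uniqueness of horizontal extensions together with the group law to get the identity~$\alpha_{ts}(\omega')=\gamma_t^{-1}{}^*\left(\alpha_s(\gamma_t^*\omega')\right)$ of~\eqref{eq:testconfig_Higgs}, and then lets~$s\to0$ with~$\omega'=\omega_{ts}$ to obtain~$\alpha_0(\omega_0)=\gamma_t^{-1}{}^*\left(\alpha_0(\gamma_t^*\omega_0)\right)$, which by Remark~\ref{rmk:aut_alpha} is exactly the statement~$\gamma_t\in\mrm{Aut}_{\alpha_0}(M_0)$; no differentiation and no potentials appear. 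You instead linearize: you differentiate the same equivariance relation against the generator~$\xi$, use horizontality of the Higgs field via~\eqref{eq:hor_lift} to get the fibrewise identity~$(\m{L}_\xi\m{A})\restriction_{M_t}=\bdiff\bigl(\alpha_t(\diff(\dot\phi\restriction_{M_t}))^{\sharp_{\omega_t}}\bigr)$, pass to~$t=0$, and exponentiate through Lemma~\ref{lem:h_alpha}. Your route buys a cleaner limit: it only involves restrictions to fibres of tensors smooth on the total space, whereas the paper's limit~$s\to 0$ implicitly requires continuity in~$s$ of the horizontal transport defining~$\alpha_s(\cdot)$ (which is evaluated at~$\gamma_t^*\omega_{ts}$, not at the reference form~$\omega_s$), a point the paper passes over. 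The cost is two points you should make explicit. First, since~$\m{M}$ is non-compact, differentiating~$\gamma_s^*\Omega-\Omega=\I\diff\bdiff\phi_s$ requires the potentials~$\phi_s$ to be chosen smoothly in~$s$; this can be arranged by fibrewise zero-mean normalization, using properness of~$\pi$ and compactness of the fibres, and the same normalization gives the smooth convergence~$\dot\phi\restriction_{M_t}\to h_0$ you need. Second, Lemma~\ref{lem:h_alpha} puts the flow of~$\xi_0$ in~$\mrm{Aut}_{\alpha_0}(M_0)$, but the induced action is all of~$\bb{C}^*$: you also need~$\f{h}_{\alpha_0}$ to be closed under multiplication by~$\I$, which does hold because condition~\eqref{eq:h_alpha} is complex-linear in the holomorphy potential, so that both the~$\bb{R}_{>0}$ and the~$\bb{S}^1$ parts of the action preserve the Higgs field. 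Neither point is a gap in principle, but both are needed for your proof to be complete.
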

If~$M_0$ is just a reduced K\"ahler space, the same conclusion holds on the smooth locus of the central fibre. It is not clear what behaviour we might expect at the singular points of~$M_0$, even if the notion of compatibility still makes sense when~$M_0$ has singularities.

\begin{proof}[Proof of Lemma~\ref{lem:smooth_compatible_testconf}]
Extend~$\alpha$ to a first-order deformation~$\m{A}^*$ of the K\"ahler fibration~$(\m{M}^*,\Omega_{|\m{M}^*})\to\bb{C}^*$, as in~\eqref{eq:testconf_def},~\eqref{eq:alpha_testconfig}. Consider the bundle of compatible pairs~$\m{E}_t\to[\omega_t]$, defined in Section~\ref{sec:bundle_class}. The~$(0,1)$-form~$\alpha_t$ defined in~\eqref{eq:alpha_testconfig} is compatible with~$\omega_t$, so we can extend it to a horizontal section of~$\m{E}_t$, which we still denote by~$\alpha_t$. Since horizontal extensions are unique, it is easy to show (as in Lemma~\ref{lemma:hol_pullbak_connection}) that for~$t\not=0$ and every~$\omega'\in[\omega_t]$
\begin{equation}\label{eq:alpha_extension_C*}
\alpha_t(\omega')=\gamma_t^{-1}{}^*\left(\alpha(\gamma_t^*\omega')\right).
\end{equation}
As~$\gamma$ is a group homomorphism, we can deduce from~\eqref{eq:alpha_extension_C*} that, for every~$t,s\in\bb{C}^*$ and every~$\omega'\in[\omega_{ts}]$
\begin{equation}\label{eq:testconfig_Higgs}
\alpha_{ts}(\omega')=\gamma_t^{-1}{}^*\left(\alpha_s(\gamma_t^*\omega')\right).
\end{equation}
Consider~\eqref{eq:testconfig_Higgs} for~$\omega'=\omega_{ts}$. As~$\Omega$ is a (smooth) K\"ahler form on~$\m{M}$ and~$\omega_t\coloneqq \Omega_{\restriction M_t}$, we have~$\omega_0=\lim_{s\to 0}\omega_s$. Moreover, our hypothesis is that~$\alpha_0=\lim_{s\to 0}\alpha_s$, so we can take the limit for~$s\to 0$ and we find
\begin{equation}
\alpha_0(\omega_0)=\gamma_t^{-1}{}^*\left(\alpha_0(\gamma_t^*\omega_0)\right),
\end{equation}
so that~$\gamma(\bb{C}^*)\subseteq\mrm{Aut}_{\alpha_0}(M_0)$, see Remark~\ref{rmk:aut_alpha}.
\end{proof}
As an example of Definition~\ref{def:compatible_testconfig}, it might be useful to check whether a product test configuration is compatible with a first-order deformation~$\alpha$. Assume that~$(M,\omega)$ admits a~$\bb{C}^*$-action,~$\set*{\psi_s\in\mrm{Aut}(M,J)\tc s\in\bb{C}^*}$, generated by~$X\in\f{h}_0$. We define on the product~$\m{M}=M\times\bb{C}$ the K\"ahler form~$\Omega=p_1^*\omega'+p_2^*\omega_{\mrm{FS}}$, where~$\omega'$ is any~$\bb{S}^1$-invariant form in~$[\omega]$ and~$\omega_{\mrm{FS}}$ is the Fubini-Study form induced on~$\bb{C}$ by the inclusion in~$\bb{P}^1$. The~$\bb{C}^*$-action on~$M$ induces an action on~$\m{M}$ by~$\gamma_s(z,t)\coloneqq (\psi_s(z),st)$. The action induced on the central fibre is just the~$\bb{C}^*$-action that we started with. Lemma~\ref{lem:smooth_compatible_testconf} shows that a necessary condition for~$M\times\bb{C}$ to be compatible with the Higgs field~$\alpha$ is that~$X\in\f{h}_\alpha$. On the other hand, if~$X\in\f{h}_\alpha$ then~$\alpha_t=\alpha$ for every~$t\not=0$, where~$\alpha_t$ is defined by~\eqref{eq:alpha_extension_C*}, so~$\alpha_t$ can be extended to the central fibre.

Lemma~\ref{lem:smooth_compatible_testconf} provides an obstruction to compatibility between an arbitrary test configuration and a first-order deformation of the complex structure. In general, it seems difficult to check whether a given test configuration is compatible with a Higgs term or not. In the polarized case there is however a class of test configurations for which the compatibility condition takes a simpler form. For a polarized manifold~$(X,L)$ and a closed subscheme~$Z\subset X$, the deformation to the normal cone of~$Z$ is a test configuration obtained by blowing up~$Z\times\set{0}$ in~$X\times\bb{A}^1$, see \cite{RossThomasHMcriterion}. Let~$\alpha$ be a Higgs field on~$X$, defining a class~$\eta\in H^1(X,\m{E}_L)$; as~$\eta$ defines a fibrewise deformation of~$X\times\bb{A}^1$, the deformation to the normal cone of~$Z$ is compatible with~$\alpha$ (or~$\eta$) if the deformation preserves~$Z$. This description of compatible test configurations can be adapted to the more general class of (semi-)test configurations induced by blowups of flag ideals studied in \cite{Odaka_blowup}.

The definition of K-stability, uniform K-stability and K-semistability for system~\eqref{eq:system_mm_implicit} can be readily adapted from the analogous notions in the cscK setting; we just have to check the conditions on the test configurations that are compatible with the Higgs field. A version of the Yau-Tian-Donaldson conjecture for our system of equations could be
\begin{conj}\label{conj:Kstab}
Let~$(M,\omega_0)$ be a compact K\"ahler manifold, and let~$\alpha$ be a Higgs field satisfying~$\m{D}^*\alpha=0$. Then, system~\eqref{eq:system_mm_implicit} admits a solution~$\omega\in[\omega_0]$ if and only if~$(M,[\omega_0])$ is K-stable with respect to test configurations that are compatible with~$\alpha$.
\end{conj}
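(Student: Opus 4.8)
The statement is a Yau--Tian--Donaldson-type conjecture, so rather than a complete proof I outline the program one would follow, mirroring the variational approach to the cscK problem. The first, and crucial, simplification is to decouple the system. By Proposition~\ref{prop:intro_sol_complex_mm}, once the Higgs field~$\alpha$ satisfies~$\m{D}^*\alpha=0$ at one point, the complex equation~$\f{m}_{\bm{\Theta}}=0$ holds along the entire complexified orbit; hence solving~\eqref{eq:system_mm_implicit} in~$[\omega_0]$ reduces to solving the single real equation~$\f{m}_{\bm{\Omega}}(\omega,J,\alpha(\omega))=0$, which by Theorem~\ref{thm:variational} is the Euler--Lagrange equation of the functional~$\m{HK}_\alpha=\m{M}+\m{H}_\alpha$ on the K\"ahler class. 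The conjecture then becomes a statement relating the existence of a minimizer of~$\m{HK}_\alpha$ to the sign of its asymptotic slopes, exactly the framework of Chen--Cheng, Berman--Darvas--Lu and Dervan--Ross for~$\m{M}$ alone, now perturbed by the extra term~$\m{H}_\alpha$.

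For the \emph{easy direction} (existence~$\Rightarrow$ stability), I would attach to each compatible test configuration a modified Donaldson--Futaki invariant, defined as the asymptotic slope of~$\m{HK}_\alpha$ along the (sub)geodesic ray generated by the~$\bb{C}^*$-action. The point of restricting to \emph{compatible} configurations (Definition~\ref{def:compatible_testconfig}) is that, by Lemma~\ref{lem:smooth_compatible_testconf}, the induced central-fibre automorphisms lie in~$\mrm{Aut}_{\alpha_0}$, so the contribution of~$\m{H}_\alpha$ is controlled; in the generator directions of~$\f{h}_\alpha$ this slope reduces to the classical Futaki invariant by Lemma~\ref{lem:Futaki}, consistently with Proposition~\ref{prop:Futaki}. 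The existence of a solution, i.e. a minimizer of~$\m{HK}_\alpha$, then forces the slope to be nonnegative along every compatible ray, giving K-semistability, with a coercivity refinement yielding K-stability. The technical content is to compute the slope of~$\m{H}_\alpha$ and express it as a fibre integral over the total space, the required input being the convexity of~$\m{H}_\alpha$ along~$\mrm{C}^{1,1}$ geodesics.

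The \emph{hard direction} (stability~$\Rightarrow$ existence) is where the genuine difficulties lie. The plan is to show that uniform K-stability with respect to compatible test configurations implies coercivity of~$\m{HK}_\alpha$ modulo~$\mrm{Aut}_\alpha$, and then that coercivity produces a smooth minimizer. This requires two ingredients that are standard for~$\m{M}$ but new here: the convexity of~$\m{HK}_\alpha$ along geodesics, and a priori estimates for the modified fourth-order equation. Both are governed by the behaviour of the horizontal transport~$\alpha(\omega)$, which solves the nonlinear equation~\eqref{eq:hor_lift} and admits no closed form outside the toric case. \textbf{This is the main obstacle:} controlling the regularity of~$\alpha(\omega)$ along degenerating families and along geodesics, where the metrics are only~$\mrm{C}^{1,1}$, is precisely the point where convexity of~$\m{H}_\alpha$ is unclear in general. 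For the estimates one would adapt the Chen--Cheng scheme, treating the~$\alpha$-dependent divergence term in~\eqref{eq:mm_real_explicit} as a perturbation of the scalar-curvature term---the linearization is elliptic and self-adjoint at a solution by Theorem~\ref{thm:intro_linearizzazione}---but the dependence of~$\alpha(\omega)$ on~$\omega$ through the flat connection of Proposition~\ref{prop:flat} couples it to the leading-order term nontrivially, so the perturbation is not lower-order in the naive sense.

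A realistic intermediate target is to establish the conjecture first in the toric setting, where the horizontal lift is explicit and the convexity of~$\m{HK}_\alpha$ along toric geodesics can be verified directly; this is the content of Section~\ref{sec:toric} and yields Theorem~\ref{thm:intro_toric_existence}, a proof of concept for the whole program.
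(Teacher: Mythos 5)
There is no proof in the paper to compare against: the statement is Conjecture~\ref{conj:Kstab}, and immediately after stating it the paper stresses that it ``should be interpreted only as a guiding principle for future research.'' Your proposal correctly treats it as a program rather than a theorem, and the scaffolding you describe is exactly the one the paper assembles: decoupling the system via Proposition~\ref{prop:intro_sol_complex_mm}, the variational characterization~$\m{HK}_\alpha=\m{M}+\m{H}_\alpha$ of Theorem~\ref{thm:variational}, the compatibility/Futaki mechanism of Lemmas~\ref{lem:smooth_compatible_testconf} and~\ref{lem:Futaki}, and the toric verification of Theorem~\ref{thm:intro_toric_existence}. Your identification of the two genuine obstacles---convexity of~$\m{H}_\alpha$ along~$\m{C}^{1,1}$ geodesics, and control of the horizontal transport~$\alpha(\omega)$ along degenerating families---also matches the paper's own caveats: Section~\ref{sec:energy} explicitly notes that convexity ``is less clear'' outside the toric case.

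Two discrepancies are worth flagging, since they affect whether your program is aimed at the right statement. First, you take the conjecture at face value for arbitrary Higgs fields, whereas the paper warns that K-stability may only yield existence for \emph{small}~$\alpha$: when the spectral function is the hyperk\"ahler potential~\eqref{eq:hyperk_potential}, the metric on~$\cotJ$ degenerates as the eigenvalues of~$\bar{\alpha}\alpha$ approach~$1$, so any existence mechanism must build in a smallness restriction that your outline omits. Second, you overstate what the toric case delivers as a ``proof of concept.'' In Section~\ref{sec:toric}, torus-invariant integrable deformations satisfying~$\m{D}^*\alpha=0$ necessarily vanish, so only the real equation~\eqref{eq:mm_real_explicit} is tested there, not the system; the stability hypothesis actually used is full toric \emph{uniform} K-stability rather than stability with respect to a restricted class of compatible test configurations, so the hard direction is proved under a hypothesis stronger than the conjecture's; and existence is obtained only for~$\norm{H}_{\m{C}^2}$ small (Proposition~\ref{prop:toric_existence}). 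The toric theorem is thus evidence for a smallness-modified version of the conjecture, and it does not probe the compatible-test-configuration notion at all---which remains the least-tested ingredient of your program, and the one for which neither you nor the paper offers a working Donaldson--Futaki calculus.
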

This statement should be interpreted only as a guiding principle for future research. For example, it might be the case that K-stability will guarantee the existence of solutions of the real moment map equation only for small Higgs fields. In fact, we should expect this when the function defining our Calabi Ansatz metric~\eqref{eq:metrica_TJ} is defined only on a subset of~$\cotJ$, rather than the full space. For example, if the spectral function defining the metric on~$\cotJ$ is~\eqref{eq:hyperk_potential}, the equation degenerates when the eigenvalues of~$\bar{\alpha}(\omega)\alpha(\omega)$ tend to~$1$.

Conjecture~\ref{conj:Kstab} predicts, among other things, that on any cscK manifold~$(M,\omega)$ there should be a solution of equation~\eqref{eq:mm_real_explicit}, for any choice of a (small enough) Higgs field. We will see an example of this phenomenon in the context of toric manifolds, in Section~\ref{sec:toric}.

\begin{rmk}
This stability condition does not depend on the choice of the spectral function used to define the metric~\eqref{eq:metrica_TJ} on~$\cotJ$. This phenomenon is to be expected, as this version of K-stability should still be similar to a GIT stability notion: while the moment map equations~\eqref{eq:system_mm_implicit} depend on the metric~$\bm{\Omega}$ on~$\cotJ$, the stability of any point~$(J,\alpha)\in\cotJ$ depends instead on the action of~$\m{G}$ and the polarization of~$\cotJ$, or the class~$[\bm{\Omega}]$, rather than the particular choice of metric in~$[\bm{\Omega}]$. 

There is also a similar phenomenon in the context of Higgs bundles. The paper~\cite{Mundet_Hitchin_corr} studies a variety of K\"ahler reductions, that generalize the Higgs bundle equations. These reductions are defined on the product~$\m{A}\times\m{S}$ of a space of connections on~$E$ with the space of sections of a second bundle. The equations depend on the choice of a fibrewise metric on this bundle, that is used to define the metric on~$\m{A}\times\m{S}$. It turns out, however, that the existence of solutions to these equations is equivalent to an algebraic stability condition that does not depend on this choice, see~\cite[Theorem~$2.19$]{Mundet_Hitchin_corr}.
\end{rmk}

\section{The toric case}\label{sec:toric}

Assume that~$(M,\omega)$ is a toric manifold, and denote by~$P\subset\bb{R}^n$ its moment polytope. The set~$M^\circ\subset M$ where the~$\bb{T}$-action is free is identified with~$P\times\bb{R}$, and in the system of real coordinates~$\left(\vec{y},\vec{w}\right)$ on~$P\times\bb{R}^n$ the K\"ahler form~$\omega$ takes the form~$\omega=\sum_a\mrm{d}y^a\wedge\mrm{d}w^a$.

The complex structure is described by the Hessian of a convex function~$u\in\m{C}(P)\cap\m{C}^\infty(P^\circ)$, commonly called the \emph{symplectic potential} of the metric~$g_J$. The symplectic potentials have a prescribed singular behaviour at the boundary of the polytope: assume that~$P$ is described by the set of linear inequalities~$\ell_i(x)\geq 0$ for~$i=1,\dots,r$; a convex function~$u$ is a symplectic potential if and only if~$u(x)-\sum_{i=1}^r\ell_i(x)\log\ell_i(x)$ is smooth up to the boundary of the polytope. This behaviour is known as \emph{Guillemin's boundary conditions}.

Similarly, a first-order deformation~$\alpha$ of the complex structure can be identified with the product matrix~$D^2(u)^{-1}H$, for some symmetric matrix-valued function~$H$; the deformation will be integrable if and only if~$H=D^2(h)$ for some~$h\in\m{C}^\infty(P)$, see the proof of Proposition~$1.14$ in~\cite{ScarpaStoppa_symplectic}. In this section we will only focus on the real moment map equation~\eqref{eq:mm_real_definition}, as there are no integrable torus-invariant solutions of the complex moment map~$\m{D}^*\alpha=0$, see Corollary~$7.2$ in~\cite{ScarpaStoppa_symplectic}.

We start by characterizing the solutions to the horizontal extension equation~\eqref{eq:hor_lift}.
\begin{lemma}\label{lem:toric_horlift}
Let~$u_t$ be a path of symplectic potentials on~$P$, corresponding to a path of K\"ahler metrics~$\omega_t$, and for a fixed~$h\in\m{C}^\infty(P)$ consider the path of first-order deformations of the complex structure~$D^2u_t^{-1}D^2h$, corresponding to~$\alpha_t\in\m{A}^{0,1}(T^{1,0}M)$. Then~$\alpha_t$ is a horizontal lift of~$\omega_t$.
\end{lemma}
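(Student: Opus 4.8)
The plan is to verify directly that the path $\alpha_t$ satisfies the defining ODE~\eqref{eq:hor_lift} of the horizontal lift, working throughout in the action-angle coordinates $(\vec y,\vec w)$ and using the toric dictionary recalled in~\cite{ScarpaStoppa_symplectic}. Write $G_t\coloneqq D^2u_t$, so that $\alpha_t$ is represented by the matrix-valued function $A_t=G_t^{-1}D^2h$; since $D^2h$ is symmetric and $t$-independent, each $\alpha_t$ is automatically compatible with $\omega_t$ and integrable, hence $\alpha_t\in\m{E}_{\omega_t}$, and only horizontality is at stake. The initial condition $\alpha_0=\alpha$ is immediate. To avoid a clash with the function $h$ defining the deformation, I write the path of Kähler potentials as $\phi_t$, so that $\omega_t=\omega_0+\I\diff\bdiff\phi_t$ matches the $h_t$ of~\eqref{eq:hor_lift} and the relevant time derivative there is $\dot\phi_t$. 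The left-hand side of~\eqref{eq:hor_lift} is then essentially tautological at the level of matrices: differentiating $A_t=G_t^{-1}D^2h$ gives $\diff_t A_t=-G_t^{-1}\dot G_t\,G_t^{-1}D^2h$, and $\dot G_t=D^2\dot u_t$ is the (\,$\vec y$-)Hessian of $\dot u_t$.

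The substance is the right-hand side $\bdiff\bigl(\alpha_t(\diff\dot\phi_t)^{\sharp_t}\bigr)$. The ingredients I would assemble are: the Legendre duality $\vec\xi=\nabla u_t$, $\vec y=\nabla\phi_t$ with inverse Hessians $D^2_{\vec\xi}\phi_t=G_t^{-1}$; the envelope-theorem identity $\dot\phi_t=-\dot u_t$ as functions on the polytope; and Abreu's expressions for $g_t$, $J_{u_t}$ and for $\bdiff$ acting on torus-invariant tensors. Using $\dot\phi_t=-\dot u_t(\vec y)$ and the chain rule $\partial_{\vec\xi}\dot\phi_t=-G_t^{-1}\nabla_{\vec y}\dot u_t$, I would compute the contraction $\alpha_t(\nabla^{0,1}_t\dot\phi_t)$, raise indices with $g_t$, and apply $\bdiff$, which on torus-invariant coefficients reduces to a $\vec y$-derivative. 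The term in which this derivative falls on $\nabla_{\vec y}\dot u_t$ reproduces the Hessian $\dot G_t=D^2\dot u_t$, so that, after collecting the metric factors, one expects to recover exactly $\diff_tA_t$.

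The main obstacle sits precisely where this matching must be carried out, and it is twofold. First, the $\vec y$-derivative in the right-hand side also falls, by the product rule, on the metric factors and on the coefficients of $\alpha_t$, producing terms that do not contain $\dot G_t$; these must be reorganised and cancelled, which requires the $G_t$-symmetry $G_tA_t=(G_tA_t)^{\transpose}$ coming from compatibility, and care with the order of the matrix products (the naive Hessian term appears as $A_t\,\dot G_t\,G_t^{-1}$ rather than the expected $G_t^{-1}\dot G_t\,A_t$). Second, the identification $\alpha_t\leftrightarrow A_t$ is itself $t$-dependent because the adapted frame of $J_{u_t}$ varies with $u_t$, whereas~\eqref{eq:hor_lift} is intrinsic to the fixed complex structure of the complexified picture; one must therefore check that the intrinsic $\diff_t\alpha_t$ corresponds to the naive matrix derivative $\diff_tA_t$, reconciling the two through the moment-map symplectomorphism implicit in the Legendre transform.

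Both difficulties are avoided, and the computation is cleanest, if one instead proves horizontality through Proposition~\ref{prop:complessificazione_cotJ}: it suffices to show that the path $t\mapsto(J_{u_t},\alpha_t)$, viewed as honest tensors on the fixed symplectic manifold, is tangent to the ``imaginary'' part of the distribution~\eqref{eq:distribuzione} generated by $\dot\phi_t$, since that tangency is equivalent there to $\alpha_t=\alpha(\omega_t)$ being the horizontal lift (compare~\eqref{eq:condizioni_complessificazione}). The advantage is that in action-angle coordinates $\omega$ is constant, so the Hamiltonian vector field of $\dot\phi_t=-\dot u_t$ has only $\partial_{\vec w}$-components with $\vec y$-dependent coefficients, and the Lie derivatives $\m{L}_{X_{\dot\phi_t}}J_{u_t}$ and $\m{L}_{X_{\dot\phi_t}}\alpha_t$ become explicit and manifestly produce the Hessian $D^2\dot\phi_t=-\dot G_t$. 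Matching the deformation part of the distribution against $\diff_t(G_t^{-1}D^2h)=-G_t^{-1}\dot G_t\,G_t^{-1}D^2h$ then closes the argument without any frame reconciliation, the only remaining input being the standard toric identity for the complex-structure part $\diff_tJ_{u_t}$, which is the imaginary direction of the formal $\m{G}$-complexification on $\m{J}$.
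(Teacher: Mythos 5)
Your chosen route has a genuine gap, and it sits exactly where you claim to have saved work. The equivalence you invoke --- tangency to the imaginary part of~\eqref{eq:distribuzione} being the same as horizontality, ``compare~\eqref{eq:condizioni_complessificazione}'' --- is established in the proof of Proposition~\ref{prop:complessificazione_cotJ} only for paths that are already \emph{presented} in the form $\psi_t^*\left(J,\alpha(\omega_t)\right)$, where $\psi_t$ is the isotopy of $\tfrac12 JX^{\omega_t}_{\dot\phi_t}$. Your path $(J_{u_t},A_t)$, $A_t=G_t^{-1}D^2h$, is instead presented through the $t$-dependent Legendre/moment-map identification of symplectic with complex coordinates, and the Lemma's $\alpha_t$ is by definition the image of $A_t$ under the \emph{time-$t$} identification. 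To convert ``$(J_{u_t},A_t)$ is tangent to the imaginary distribution'' into ``$\alpha_t$ solves~\eqref{eq:hor_lift}'' you must know that the Legendre family of identifications is generated by precisely the vector field $\tfrac12 JX^{\omega_t}_{\dot\phi_t}$ --- not merely that both families pull $J$ back to $J_{u_t}$. Without this, tangency plus uniqueness of integral curves only yields $A_t=\psi_t^*\beta_t$ for \emph{some} horizontal path $\beta_t$, and $\beta_t$ can a priori differ from the Lemma's $\alpha_t$ by a path of holomorphic isometries of $(J,\omega_t)$. Proving the missing generating-field identity is exactly the ``frame reconciliation'' you set aside: it amounts to the envelope identity $\dot\varphi_t=-4\dot u_t$ together with $\nabla_y\dot u_t=-G_t^{-1}\nabla_x\dot v_t$, i.e.\ the same Legendre-duality input that drives the paper's computation. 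It does not disappear; it is the crux. There is also a circularity risk: in the paper's logic, the statement that the toric data parametrizes a complexified orbit is a \emph{consequence} of this Lemma via Proposition~\ref{prop:complessificazione_cotJ}, not an available input. Finally, the two steps you describe as ``manifest'' --- matching $\diff_tA_t=-G_t^{-1}\dot G_t\,G_t^{-1}D^2h$ against $\tfrac12\left[(\m{L}_{X}A_t)J_{u_t}^\transpose+(\m{L}_{X}J_{u_t}^\transpose)A_t\right]$, which requires the real-matrix dictionary for $A_t$ and Abreu's expression for $J_{u_t}$ --- are asserted, not carried out, and are not shorter than what they replace.

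For contrast, the paper never moves diffeomorphisms around: it writes $\alpha_t$ on the fixed complex manifold via the dictionary, $(\alpha_t)\indices{_{\bar{b}}^a}=u_t^{bc}\diff_{y^c}\diff_{y^a}h$ evaluated at $\mu_t$, observes that this equals $\bdiff\left(\nabla_t^{1,0}(h\circ\mu_t)\right)$, and differentiates in $t$; equation~\eqref{eq:hor_lift} then reduces to the single scalar identity $\diff_t(h\circ\mu_t)=\left\langle\diff\varphi'_t,\bdiff(h\circ\mu_t)\right\rangle_t$, which is a two-line consequence of Legendre duality. This is essentially your abandoned ``first approach,'' and the difficulties you feared there (product-rule debris, ordering of matrix products, the $t$-dependent frame) dissolve once one works with the intrinsic form $\alpha_t=\bdiff\nabla_t^{1,0}(h\circ\mu_t)$ rather than with raw matrices. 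If you want to salvage your second route, prove the generating-field identity for the Legendre family first; but at that point you will have reproduced the paper's computation with extra overhead.
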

\begin{proof}
Recall that we use complex coordinates~$\left(z^a=x^a+\I w^a\right)_{1\leq a\leq n}$ on~$M^\circ$, and real coordinates~$(y^a)_{1\leq a\leq n}$ on~$P$. The K\"ahler metrics~$\omega_t$ can be written in~$M^\circ$ as~$\omega_t=\I\diff\bdiff\varphi_t$ for some local torus-invariant potential; if we define~$v_t=\varphi_t/4$, then
\begin{equation}
\omega=\I\,\diff_{z^a}\diff_{\bar{z}^b}\varphi_t\,\mrm{d}z^a\wedge\mrm{d}\bar{z}^b=\I\,\diff_{x^a}\diff_{x^b}v_t\,\mrm{d}z^a\wedge\mrm{d}\bar{z}^b
\end{equation}
and~$u_t$ is the Legendre dual of~$v_t$. Moreover, in these coordinates the moment map for the torus action with respect to~$\omega_t$ is given by~$\mu_t\coloneqq \diff_xv_t$.

With this notation,~$\alpha_t$ is written in complex coordinates as
\begin{equation}
(\alpha_t)\indices{_{\bar{b}}^a}=u_t^{bc}\diff_{y^c}\diff_{y^a}h=\diff_{x^b}\left(v_t^{ac}\diff_{x^c}(h\circ\mu_t)\right)=\left(\bdiff\nabla^{1,0}_t(h\circ\mu_t)\right)\indices{_{\bar{b}}^a}.
\end{equation}
Taking the derivative in~$t$, we find
\begin{equation}
(\diff_t\alpha_t)\indices{_{\bar{b}}^a}=\bdiff\left(\alpha_t(\diff\varphi'_t)^{\sharp_t}\right)\indices{_{\bar{b}}^a}+\diff_{\bar{z}^b}\nabla_t^{a}\left(\diff_t(h\circ\mu_t)-\left\langle\diff\varphi'_t,\bdiff(h\circ\mu_t)\right\rangle_t\right).
\end{equation}
Comparing this with~\eqref{eq:hor_lift}, we see that to prove the Lemma it will be enough to show
\begin{equation}\label{eq:toric_horlift}
\diff_t(h\circ\mu_t)=\left\langle\diff\varphi'_t,\bdiff(h\circ\mu_t)\right\rangle_t.
\end{equation}
As~$\mu_t=\diff_xv_t=\diff_x\varphi_t/4$, the left-hand side of~\eqref{eq:toric_horlift} is
\begin{equation}
\diff_t(h\circ\mu_t)=\sum_a\diff_{y^a}h\diff_t\mu^a_t=\frac{1}{4}\sum_a\diff_{y^a}h\diff_{x^a}\varphi'_t.
\end{equation}
For the right-hand side of~\eqref{eq:toric_horlift} instead we have
\begin{equation}
\left\langle\diff\varphi'_t,\bdiff(h\circ\mu_t)\right\rangle_t=\diff_{z^a}\varphi'_tv_t^{ab}\diff_{\bar{z}^b}(h\circ\mu_t)=\frac{1}{4}\diff_{x^a}\varphi'_tv_t^{ab}\diff_{x^b}(h\circ\mu_t)=\frac{1}{4}\sum_a\diff_{x^a}\varphi'_t\diff_{y^a}h
\end{equation}
where for the last equality we have used the fact that~$v_t(x)$ and~$u_t(y)$ are Legendre duals of each others.
\end{proof}

The scalar curvature of~$g_J$ has a simple expression in terms of the symplectic potential
\begin{equation}
s(\omega,J)(\vec{y})=-\frac{1}{4}\diff_a\diff_b\mrm{Hess}(u)^{ab}.
\end{equation}
Letting~$G_{ab}\coloneqq \diff_a\diff_b u$,~$H_{ab}\coloneqq \diff_a\diff_b h$ and~$\alpha\coloneqq G^{-1}H$, the real moment map equation~\eqref{eq:mm_real_explicit} takes the form
\begin{equation}\label{eq:toric_realeq}
-\diff_a\diff_b\left(\left(1+2\,f'(\bar{\alpha}\alpha)\bar{\alpha}\alpha\right)G^{-1}\right)^{ab}=A_0.
\end{equation}
\begin{rmk}\label{eq:toric_posdef}
The matrix~$\left(1+2\,f'(\bar{\alpha}\alpha)\bar{\alpha}\alpha\right)G^{-1}$ is positive-definite; to see this, fix a point~$p\in P$ a system of linear coordinates on~$\bb{R}^n$ such that, at~$p$,~$G=\mathbb{1}$,~$\bar{H}H=\mrm{diag}(\lambda_1,\dots,\lambda_n)$. Recall that~$f(\vec{\lambda})=\sum_ak(\lambda_a)$ for a convex, non-decreasing function~$k$. Then, at the point~$p$ one has
\begin{equation}
\left(1+2\,f'(\bar{\alpha}\alpha)\bar{\alpha}\alpha\right)G^{-1}=\mrm{diag}\left(1+2\,\lambda_1\,k'(\lambda_1),\dots,1+2\,\lambda_n\,k'(\lambda_n)\right)
\end{equation}
and each of these eigenvalues is positive.
\end{rmk}
The next result is inspired by an integration by parts formula of Donaldson, see Lemma~$3.3.5$ in~\cite{Donaldson_stability_toric}, and its proof can be obtained with minor modifications from the proof of an analogous result in~\cite{ScarpaStoppa_symplectic}, Lemma~$5.1$.
\begin{lemma}\label{lemma:intbyparts_realmm}
For a Delzant polytope~$P\subset\bb{R}^n$, fix a symplectic potential~$u$ satisfying Guillemin's boundary conditions and a first-order deformation of the complex structure~$\alpha$. For any function~$v\in\m{C}^0(P)\cap\m{C}^\infty(P^\circ)$ that is either convex or smooth on~$P$ we have
\begin{align*}
&\int_P\mrm{Tr}\left(\left(1+2\,f'(\bar{\alpha}\alpha)\bar{\alpha}\alpha\right)G^{-1}D^2v\right)\mrm{d}\mu=\\&=\int_Pv\left(\left(1+2\,f'(\bar{\alpha}\alpha)\bar{\alpha}\alpha\right)G^{-1}\right)^{ab}_{,ab}\mrm{d}\mu+\int_{\diff P}\!v\,\mrm{d}\sigma.
\end{align*}
\end{lemma}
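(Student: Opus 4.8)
The plan is to run Donaldson's double integration-by-parts argument (Lemma~$3.3.5$ in~\cite{Donaldson_stability_toric}) with the symmetric matrix~$A\coloneqq\left(1+2\,f'(\bar{\alpha}\alpha)\bar{\alpha}\alpha\right)G^{-1}$ in place of the inverse Hessian~$G^{-1}$, and to verify that the spectral factor~$M\coloneqq 1+2\,f'(\bar{\alpha}\alpha)\bar{\alpha}\alpha$ does not alter the surviving boundary contribution. Since~$\mrm{Tr}(A\,D^2v)=A^{ab}v_{,ab}$ and~$A$ is symmetric and positive-definite by Remark~\ref{eq:toric_posdef}, I would first integrate over the shrunken polytope~$P_\varepsilon=\set*{x\in P\tc\ell_i(x)\geq\varepsilon\ \forall i}$, where every term is smooth, integrate by parts twice, and obtain
\begin{equation}
\int_{P_\varepsilon}A^{ab}v_{,ab}\,\mrm{d}\mu=\int_{P_\varepsilon}v\,A^{ab}_{,ab}\,\mrm{d}\mu+\int_{\diff P_\varepsilon}\left(A^{ab}v_{,a}\,\nu_b-A^{ab}_{,b}\,v\,\nu_a\right)\mrm{d}\sigma_{\mrm{euc}},
\end{equation}
where~$\nu$ is the outward unit conormal and~$\mrm{d}\sigma_{\mrm{euc}}$ the Euclidean hypersurface measure. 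The interior integrals converge to the corresponding integrals over~$P$ as~$\varepsilon\to0$ (the density~$A^{ab}_{,ab}$ is~$\mrm{d}\mu$-integrable, exactly as for the scalar curvature term), so the whole problem reduces to identifying the limit of the boundary term.

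For the boundary analysis I would exploit that, by the computation in Remark~\ref{eq:toric_posdef}, at each interior point~$G^{-1}$,~$\bar{\alpha}\alpha$ and~$M$ are simultaneously diagonalizable, so~$A$ has eigenvalues~$(1+2\lambda_a\,k'(\lambda_a))\mu_a$ with~$\mu_a$ the eigenvalues of~$G^{-1}$. Guillemin's boundary conditions force~$G=D^2u$ to blow up like~$\tfrac{1}{2\ell_i}\,\mrm{d}\ell_i\otimes\mrm{d}\ell_i$ near the interior of the facet~$\set{\ell_i=0}$, so the eigenvalue of~$G^{-1}$ in the conormal direction~$\nu\propto\mrm{d}\ell_i$ vanishes like~$\ell_i$. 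Because~$\alpha=G^{-1}H$ with~$H=D^2h$ smooth up to the boundary, the conormal component of~$\alpha$—hence the corresponding eigenvalue~$\lambda$ of~$\bar{\alpha}\alpha$—also vanishes at the facet; thus the conormal eigenvalue of~$M$ tends to~$1$, and~$A$ agrees with~$G^{-1}$ to leading order in the conormal direction. Consequently~$A\nu\to0$, so the first boundary term~$A^{ab}v_{,a}\nu_b=\langle A\nu,\nabla v\rangle$ drops out for smooth~$v$, exactly as for~$G^{-1}$, and the remaining term~$A^{ab}_{,b}\nu_a\,v$ has the same limit as~$(G^{-1})^{ab}_{,b}\nu_a\,v$; this is precisely the limit that produces the measure~$\mrm{d}\sigma$ (Lebesgue measure on each facet normalized by the primitive inward normal), yielding~$\int_{\diff P}v\,\mrm{d}\sigma$.

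The delicate point—and the step I expect to be the main obstacle—is to control the correction~$R\coloneqq(M-1)G^{-1}=2\,f'(\bar{\alpha}\alpha)\bar{\alpha}\alpha\,G^{-1}$ at the level of first derivatives, i.e. to show~$R^{ab}_{,b}\nu_a\to0$ on each facet so that~$R$ contributes nothing to the surviving boundary term. This requires a quantitative version of the vanishing statements above: Guillemin-type asymptotic expansions for~$G^{-1}$ and for~$\alpha=G^{-1}D^2h$ near the facets, together with the boundedness of~$k'$ and~$k''$ on the range of the~$\lambda_a$ (guaranteed as the eigenvalues remain in a compact interval and~$k$ is~$\m{C}^2$). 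These estimates are entirely analogous to those in the proof of Lemma~$5.1$ of~\cite{ScarpaStoppa_symplectic} and need only minor modifications to accommodate the factor~$M$. Finally, the case of a merely convex (rather than smooth)~$v$ follows by approximating~$v$ by smooth convex functions and passing to the limit, using the well-defined boundary trace of a convex function on~$P$ and the monotonicity of the interior term, exactly as in the scalar curvature setting.
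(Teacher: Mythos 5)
Your proposal is correct and follows essentially the same route as the paper, which gives no self-contained argument but states that the lemma is obtained from Donaldson's integration-by-parts formula (Lemma~$3.3.5$ of~\cite{Donaldson_stability_toric}) with the minor modifications carried out in Lemma~$5.1$ of~\cite{ScarpaStoppa_symplectic} --- i.e.\ precisely the double integration by parts over shrunken polytopes plus the Guillemin-type boundary analysis you describe. Your identification of the key point, namely that the correction term $2\,f'(\bar{\alpha}\alpha)\bar{\alpha}\alpha\,G^{-1}$ inherits the degeneracy of $G^{-1}$ at the facets (vanishing conormal row and column, with the normal-normal entry vanishing to second order, since the correction is sandwiched between copies of $G^{-1}$) and therefore contributes nothing to the surviving boundary term, is exactly the content of that modification.
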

\begin{rmk}
Compare Lemma~\ref{lemma:intbyparts_realmm} with Proposition~\ref{prop:Futaki_vanish}. Lemma~\ref{lemma:intbyparts_realmm} implies that a necessary condition for the existence of solutions to the real moment map equation is that for every affine-linear function~$u\in\m{C}^\infty(P)$
\begin{equation}
\int_{\diff P}u\mrm{d}\sigma-\int_PA_0u\mrm{d}\mu=0
\end{equation}
which amounts to saying that the Futaki invariant must vanish on torus-invariant holomorphic vector fields, which are all contained in~$\f{h}_\alpha$.
\end{rmk}
As a corollary of Lemma~\ref{lemma:intbyparts_realmm}, if~$u$ and~$\alpha$ solve equation~\eqref{eq:toric_realeq} we see that, for any convex function~$v\in\m{C}^0(P)\cap\m{C}^\infty(P^\circ)$,
\begin{equation}
\int_{\diff P}\!v\,\mrm{d}\sigma-\int_Pv\,A_0\mrm{d}\mu=\int_P\mrm{Tr}\left(\left(1+2\,f'(\bar{\alpha}\alpha)\bar{\alpha}\alpha\right)G^{-1}D^2v\right)\mrm{d}\mu.
\end{equation}
As~$\left(1+2\,f'(\bar{\alpha}\alpha)\bar{\alpha}\alpha\right)G^{-1}$ is a positive-definite matrix, there is some constant~$\lambda>0$ such that
\begin{equation}
\int_{\diff P}\!v\,\mrm{d}\sigma-\int_Pv\,A_0\mrm{d}\mu>\lambda\int_{\diff P}v\mrm{d}\sigma
\end{equation}
so that~$M$ is uniformly (toric-)$K$-stable, see~\cite[\S$5$]{Li_uniformKstab}. In other words, K-stability is a necessary condition for the existence of torus-invariant solutions of~\eqref{eq:mm_real_definition}. On the other hand, our conjecture in Section~\ref{sec:stability} says that (toric) K-stability should imply the existence of solutions of our equation. We can verify that this is indeed the case, at least for small enough deformations of the complex structure. To do so, we first need some properties of the variational characterization of equation~\eqref{eq:toric_realeq}.
\begin{prop}\label{prop:toric_convexity}
The real moment map equation~\eqref{eq:toric_realeq} is the Euler-Lagrange equation of the functional
\begin{equation}
\m{HK}(u)=\int_{\diff P}u\mrm{d}\sigma-\int_PA_0u\mrm{d}\mu-\int_P\log\det\,D^2u\,\mrm{d}\mu+\int_Pf(G^{-1}\bar{H}G^{-1}H)\mrm{d}\mu.
\end{equation}
This functional is convex along linear paths of symplectic potentials.
\end{prop}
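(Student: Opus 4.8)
The statement splits into two independent claims, and the plan is to prove them in turn. \emph{For the Euler--Lagrange characterization} I would differentiate $\m{HK}(u_s)$ along a linear path $u_s=u+s\,v$ with $v\in\m{C}^\infty(P)$, writing $G=D^2u$ and $B=D^2v$. The two affine terms contribute $\int_{\diff P}v\,\mrm{d}\sigma-\int_P A_0 v\,\mrm{d}\mu$; the Monge--Amp\`ere term gives $\diff_{s=0}\bigl(-\int_P\log\det D^2u_s\,\mrm{d}\mu\bigr)=-\int_P(G^{-1})^{ab}v_{,ab}\,\mrm{d}\mu$; and for the spectral term I would use that the differential of a spectral function is $\mrm{Tr}(f'(\bar\alpha\alpha)\,\cdot\,)$, so that its variation equals $\int_P\mrm{Tr}\bigl(f'(\bar\alpha\alpha)\,\delta(\bar\alpha\alpha)\bigr)\mrm{d}\mu$ with $\delta(\bar\alpha\alpha)=-G^{-1}BG^{-1}\bar HG^{-1}H-G^{-1}\bar HG^{-1}BG^{-1}H$.

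The algebraic heart of the first part is that, in the real toric description, $\bar H=H$ and hence $\bar\alpha\alpha=(G^{-1}H)^2$; therefore $\alpha=G^{-1}H$ commutes with $\bar\alpha\alpha$ and with $f'(\bar\alpha\alpha)$. Using this together with cyclicity of the trace, both summands of $\delta(\bar\alpha\alpha)$ collapse to $-\mrm{Tr}\bigl(f'(\bar\alpha\alpha)\bar\alpha\alpha\,G^{-1}B\bigr)$, so the spectral variation is $-2\int_P\mrm{Tr}\bigl(f'(\bar\alpha\alpha)\bar\alpha\alpha\,G^{-1}D^2v\bigr)\mrm{d}\mu$. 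Adding the Monge--Amp\`ere contribution assembles the whole bulk first variation into $-\int_P\mrm{Tr}\bigl((1+2f'(\bar\alpha\alpha)\bar\alpha\alpha)G^{-1}D^2v\bigr)\mrm{d}\mu$. Applying Lemma~\ref{lemma:intbyparts_realmm} to $v$ transfers both derivatives onto the (symmetric, positive-definite) matrix $(1+2f'(\bar\alpha\alpha)\bar\alpha\alpha)G^{-1}$ and produces a boundary term $-\int_{\diff P}v\,\mrm{d}\sigma$ that exactly cancels the one from the affine part. What remains is $-\int_P v\bigl[A_0+((1+2f'(\bar\alpha\alpha)\bar\alpha\alpha)G^{-1})^{ab}_{,ab}\bigr]\mrm{d}\mu$, whose vanishing for every $v$ is precisely \eqref{eq:toric_realeq}. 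The same conclusion also follows by specializing Theorem~\ref{thm:variational} to torus-invariant data, once Lemma~\ref{lem:toric_horlift} is used to identify $D^2u^{-1}D^2h$ as the horizontal lift.

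\emph{For the convexity} the affine terms drop out of the second variation, and $-\int_P\log\det D^2u_s\,\mrm{d}\mu$ is convex by the classical identity $\diff_s^2(-\log\det G_s)=\mrm{Tr}\bigl((G_s^{-1/2}BG_s^{-1/2})^2\bigr)\ge0$. The real work is to control the spectral term, which I would rewrite as $\int_P\mrm{tr}\,\tilde k(N_s)\,\mrm{d}\mu$, where $N_s=G_s^{-1/2}HG_s^{-1/2}$ is symmetric with eigenvalues $\nu_a$ satisfying $\lambda_a=\nu_a^2$ and $\tilde k(x)=k(x^2)$ (note $\tilde k''(x)=2k'(x^2)+4x^2k''(x^2)\ge0$). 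Fixing a point and linear coordinates with $G_0=\bb{1}$ and $H=\mrm{diag}(h_a)$, I would expand $G_s^{-1/2}$ to second order to obtain $\dot N$ and $\ddot N$, then apply the second-derivative formula for spectral functions of \cite{derivatives_eigenvalues}. The diagonal part comes out as $\sum_a\bigl(6\lambda_a k'(\lambda_a)+4\lambda_a^2k''(\lambda_a)\bigr)B_{aa}^2\ge0$, while collecting the $(a,b)$ and $(b,a)$ contributions the full coefficient of $B_{ab}^2$ for $a\neq b$ collapses to $4\,\dfrac{\phi(h_a)-\phi(h_b)}{h_a-h_b}$ with $\phi(t)=t^3k'(t^2)$. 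Since $\phi'(t)=3t^2k'(t^2)+2t^4k''(t^2)\ge0$, the function $\phi$ is non-decreasing and every such divided difference is nonnegative, so the spectral term is convex as well.

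The main obstacle is this second variation of the spectral term: because $N_s$ depends on $s$ through $G_s^{-1/2}$ it is not affine, so one cannot simply invoke convexity of $N\mapsto\mrm{tr}\,\tilde k(N)$, and the cross terms mixing $\dot N$ with $\ddot N$ must be tracked explicitly. The decisive simplification is that, after this bookkeeping, the off-diagonal coefficients reassemble into the divided difference of the single monotone function $\phi(t)=t^3k'(t^2)$; recognizing this structure is what makes positivity transparent, and it is the step I expect to require the most care. By contrast, in the first part the cancellation of the boundary terms is automatic once Lemma~\ref{lemma:intbyparts_realmm} and the commutation of $\alpha$ with $f'(\bar\alpha\alpha)$ are in hand.
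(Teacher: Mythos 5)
Your proposal rests on an assumption that the paper does not make and that is false in its setting: the claim that ``in the real toric description, $\bar H=H$''. In the paper's toric picture the matrix $H$ is \emph{complex} symmetric --- this is precisely why the functional in the statement is written with $f(G^{-1}\bar{H}G^{-1}H)$ rather than $f\bigl((G^{-1}H)^2\bigr)$, and why Proposition~\ref{prop:toric_existence} speaks of ``any complex symmetric matrix $H$''. That later proposition uses the convexity you are proving (it supplies openness of the continuity method), so the complex case cannot be discarded: a torus-invariant integrable deformation corresponds to $H=D^2h$ with $h$ a complex-valued function on $P$, and only the real-valued slice gives $\bar H=H$.

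The two halves of your argument are affected very differently. The Euler--Lagrange computation is repairable with no real change: the two summands of $\delta(\bar\alpha\alpha)$ can be collapsed without ever commuting $\alpha$ past $f'(\bar\alpha\alpha)$, using cyclicity of the trace together with the identity $Xf'(YX)=f'(XY)X$; one obtains $-\mrm{Tr}\bigl(f'(\bar\alpha\alpha)\bar\alpha\alpha\,G^{-1}B\bigr)-\mrm{Tr}\bigl(f'(\alpha\bar\alpha)\alpha\bar\alpha\,G^{-1}B\bigr)=-2\Re\,\mrm{Tr}\bigl(f'(\bar\alpha\alpha)\bar\alpha\alpha\,G^{-1}B\bigr)$, since $\alpha\bar\alpha=\overline{\bar\alpha\alpha}$ and $B$ is real; and because $f'(\bar\alpha\alpha)\bar\alpha\alpha\,G^{-1}$ is Hermitian, its imaginary part is antisymmetric and is annihilated by $\diff_a\diff_b$, so equation~\eqref{eq:toric_realeq} only sees this real part and your conclusion stands. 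The convexity half, by contrast, breaks structurally for complex $H$: then $\bar\alpha\alpha=G^{-1}\bar HG^{-1}H\neq(G^{-1}H)^2$, the matrix $N_s=G_s^{-1/2}HG_s^{-1/2}$ is complex symmetric rather than Hermitian, its eigenvalues are not real numbers $\nu_a$ with $\lambda_a=\nu_a^2$, and the spectral-function calculus of \cite{derivatives_eigenvalues} that you invoke does not apply to it. The fix is exactly the change of variables in the paper's proof of Theorem~\ref{thm:intro_linearizzazione} (to which the paper's own one-line proof of convexity defers): work with the Hermitian matrix $G_s^{-1/2}\bar HG_s^{-1}HG_s^{-1/2}=N_s^*N_s$, whose eigenvalues are the $\lambda_a$, and redo the perturbation expansion there. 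For what it is worth, on the real slice your computations are correct --- I checked both the diagonal coefficient $6\lambda_ak'(\lambda_a)+4\lambda_a^2k''(\lambda_a)$ and the identity collapsing the off-diagonal coefficient of $B_{ab}^2$ to $4\bigl(\phi(h_a)-\phi(h_b)\bigr)/(h_a-h_b)$ with $\phi(t)=t^3k'(t^2)$ --- and your integration-by-parts derivation of the Euler--Lagrange equation via Lemma~\ref{lemma:intbyparts_realmm} is a valid, more explicit alternative to the paper's citation of Theorem~\ref{thm:variational} and Lemma~\ref{lem:toric_horlift}. But as written, the proof covers only the real-$H$ part of the statement.
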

\begin{proof}
The first part follows from Theorem~\ref{thm:variational} and Lemma~\ref{lem:toric_horlift}, as the K-energy in the toric setting is precisely~$\m{M}(u)=\int_{\diff P}u\mrm{d}\sigma-\int_PA_0u\mrm{d}\mu-\int_P\log\det\,D^2u\,\mrm{d}\mu$. The proof of the convexity is completely analogous to the proof of the ellipticity of Theorem~\ref{thm:intro_linearizzazione} in Section~\ref{sec:linearization}.
\end{proof}
This result in particular implies uniqueness of solutions of~\eqref{eq:toric_realeq} (up to adding affine functions), and shows that the linearization of~\eqref{eq:toric_realeq} around a solution is surjective on the space of functions that are~$L^2$-orthogonal to affine functions. As the linearization is self-adjoint and its kernel is exactly the affine functions on~$P$, Proposition \ref{prop:toric_convexity} implies the existence of solutions to~\eqref{eq:toric_realeq}, for arbitrarily small~$H$, under the assumption of uniform toric K-stability of~$M$. Using an observation from \cite{AnMinLi_prescribed_toric} (after~\cite{ChenCheng_estimates}), it is possible to provide a quantitative bound on~$H$ that guarantees the existence of solutions to~\eqref{eq:toric_realeq}. 

\begin{prop}\label{prop:toric_existence}
Assume that~$P$ is a uniformly K-stable polytope. There is a constant~$C$ such that, for any complex symmetric matrix~$H$ satisfying~$\norm{H}_{\m{C}^2}<C$, there exists a symplectic potential~$u$ solving equation~\eqref{eq:toric_realeq}.
\end{prop}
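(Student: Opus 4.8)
The plan is to regard \eqref{eq:toric_realeq} as a small perturbation of Abreu's equation and to solve it by a continuity method in~$H$, using the variational structure of Proposition~\ref{prop:toric_convexity} to control the path. First I would treat the base case~$H=0$: then~$\alpha=0$ and \eqref{eq:toric_realeq} becomes~$-\diff_a\diff_b(G^{-1})^{ab}=A_0$, the toric constant scalar curvature equation. By the toric Yau--Tian--Donaldson correspondence, uniform K-stability of~$P$ is equivalent to properness of the toric K-energy~$\m{M}$ and hence yields a symplectic potential~$u_0$, satisfying Guillemin's conditions, that solves the cscK equation; this rests on the a priori estimates of~\cite{ChenCheng_estimates} together with the stability-to-coercivity dictionary of~\cite{Li_uniformKstab}. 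This~$u_0$ is the anchor of the deformation.

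Next I would run the continuity method along the linear family~$tH$,~$t\in[0,1]$, and consider the set~$S\subseteq[0,1]$ of parameters for which \eqref{eq:toric_realeq} (with matrix~$tH$) admits a solution; by the base case~$0\in S$. Openness of~$S$ follows from the linearization: by Theorem~\ref{thm:intro_linearizzazione}, equivalently by the convexity half of Proposition~\ref{prop:toric_convexity}, the linearization~$\m{L}$ of the left-hand side of \eqref{eq:toric_realeq} is a fourth-order, elliptic, self-adjoint operator whose kernel at a solution consists exactly of the affine-linear functions on~$P$. Working in Banach spaces of symplectic potentials adapted to Guillemin's boundary conditions and quotienting by the affine functions,~$\m{L}$ is an isomorphism onto the~$L^2(\mrm{d}\mu)$-orthogonal complement of the affine functions, so the implicit function theorem propagates solutions to nearby parameters.

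The substance of the argument is closedness of~$S$, and with it the quantitative bound. Here I would use that~$\m{HK}$ is convex along linear paths of potentials (Proposition~\ref{prop:toric_convexity}) and that uniform K-stability, which by~\cite{Li_uniformKstab} amounts to the coercivity estimate~$\int_{\diff P}v\,\mrm{d}\sigma-\int_PA_0v\,\mrm{d}\mu>\lambda\int_{\diff P}v\,\mrm{d}\sigma$ for normalized convex~$v$, keeps~$\m{HK}$ proper after adding the small term~$\int_Pf(G^{-1}\bar H G^{-1}H)\,\mrm{d}\mu$ via the integration-by-parts formula of Lemma~\ref{lemma:intbyparts_realmm}. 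Properness gives a uniform energy bound along the path, which the estimates of~\cite{ChenCheng_estimates} convert into uniform~$\m{C}^0$ and~$\m{C}^2$ (and then higher) control of the potentials~$u_{tH}$, producing a limit solution and closing~$S$. Since~$\norm{tH}_{\m{C}^2}\le\norm{H}_{\m{C}^2}$, these estimates hold uniformly for all~$t\in[0,1]$ as soon as~$\norm{H}_{\m{C}^2}<C$, whence~$S=[0,1]$ and in particular~$1\in S$. To name the explicit threshold~$C$ I would follow \cite{AnMinLi_prescribed_toric} (after \cite{ChenCheng_estimates}) and track the constants in these estimates:~$C$ is the size below which the perturbation neither spoils the coercivity nor degrades the ellipticity constant, whose positivity is recorded in Remark~\ref{eq:toric_posdef}.

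The hard part will be the uniform a priori estimates of the last step. The spectral function~$f$ couples the unknown metric~$G$ to~$H$ nonlinearly through the eigenvalues of~$G^{-1}\bar H G^{-1}H$, so one must prevent the perturbation from degrading the ellipticity of \eqref{eq:toric_realeq} along the continuity path while simultaneously respecting the singular Guillemin behaviour at~$\diff P$. Managing these two demands uniformly in~$t$, and carefully enough to extract an explicit constant~$C$, is precisely where the toric estimates of~\cite{ChenCheng_estimates} and their refinement in~\cite{AnMinLi_prescribed_toric} are indispensable.
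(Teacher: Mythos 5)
Your overall scheme matches the paper's: a continuity method anchored at a cscK potential $u_0$ (which exists by uniform K-stability of $P$), with openness supplied by Proposition~\ref{prop:toric_convexity} --- the linearization is self-adjoint with kernel exactly the affine functions, so the implicit function theorem applies modulo affine functions. The fact that you deform along $tH$ while the paper puts the parameter $t$ in front of the perturbation term of the operator, as in \eqref{eq:toric_contmet}, is immaterial: both paths join the cscK equation to \eqref{eq:toric_realeq}.

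The gap is in your closedness step. Properness of $\m{HK}$ is cheap here (the perturbation term is non-negative, so $\m{HK}\geq\m{M}$), but an energy bound along the path does not ``convert into'' $\m{C}^0$, $\m{C}^2$ and higher control via \cite{ChenCheng_estimates}: those estimates, and their toric counterpart in \cite{AnMinLi_prescribed_toric}, are a priori estimates for solutions of Abreu's equation $-u^{ab}_{,ab}=A$ with \emph{prescribed} right-hand side, valid when the pair $(P,A)$ is uniformly stable; they say nothing about the perturbed fourth-order operator in \eqref{eq:toric_realeq}. The device that makes them applicable --- and this is the substance of the paper's proof, which your proposal only gestures at in its final paragraph --- is to absorb the perturbation into the data: rewrite the continuity path as $-u^{ab}_{,ab}=A_t$ with the solution-dependent right-hand side $A_t=A_0+2t\left(f'(\bar{\alpha}\alpha)\bar{\alpha}\alpha\,G^{-1}\right)^{ab}_{,ab}$, and show, by testing the stability inequality against normalized convex functions, that if $\norm{f'(\bar{\alpha}\alpha)\bar{\alpha}\alpha\,G^{-1}}_{\m{C}^2}$ is small then $(P,A_t)$ remains uniformly stable with threshold $\lambda'=\lambda-C(1-\lambda)A_0^{-1}$. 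Only then does \cite{AnMinLi_prescribed_toric} give a $\m{C}^4$ bound on $v_t=u_t-u_0$, and one must still close a bootstrap loop that your plan omits: the bound on $v_t$ controls $G^{-1}$, hence (via $\alpha=G^{-1}H$) the smallness hypothesis on the solution-dependent quantity $f'(\bar{\alpha}\alpha)\bar{\alpha}\alpha\,G^{-1}$ is traded for a smallness hypothesis on $\norm{H}_{\m{C}^2}$ alone; this self-consistency is precisely where the constant $C$ of the statement comes from. A side correction: smallness of $H$ is not needed to ``preserve the ellipticity constant'' --- Remark~\ref{eq:toric_posdef} shows $\left(1+2f'(\bar{\alpha}\alpha)\bar{\alpha}\alpha\right)G^{-1}$ is positive-definite unconditionally; smallness is needed only to preserve the uniform stability threshold of the modified pair $(P,A_t)$.
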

The constant~$C$ can in principle be computed from the geometry of~$P$ and the estimates established in~\cite[Theorem~$1.2$]{ChenCheng_estimates}.
\begin{proof}
We consider the continuity method, for~$t\in[0,1]$
\begin{equation}\label{eq:toric_contmet}
\tag{$\star_t$}
-\left(\left(1+2tf'(\bar{\alpha}\alpha)\bar{\alpha}\alpha\right)G^{-1}\right)^{ab}_{,ab}=A_0.
\end{equation}
For~$t=0$, a solution in given by a cscK potential~$u_0$, that exists as we are assuming~$P$ to be uniformly K-stable. Openness of the continuity method is guaranteed by the convexity result of Proposition~\ref{prop:toric_convexity}, as we briefly outlined above.

\noindent{\textit{A priori estimates.}}\quad Define the function~$
A_t=A_0+2t\left(f'(\bar{\alpha}\alpha)\bar{\alpha}\alpha\,G^{-1}\right)^{ab}_{,ab}$ so that the continuity method can alternatively be written as
\begin{equation}\label{eq:toric_contmet_Abreu}
-u^{ab}_{,ab}=A_t.
\end{equation}
As~$P$ is uniformly K-stable, if the~$\m{C}^2$-norms of~$\alpha$ and~$\alpha G^{-1}$ are small enough, then~$(P,A_t)$ will also be uniformly K-stable. More explicitly, recall that~$(P,A)$ is uniformly K-stable if
\begin{equation}
\m{L}_{A_0}(v)>\lambda\int v\mrm{d}\sigma
\end{equation}
for every normalized convex function~$v\in\m{C}^\infty(P)$. Then, if~$(P,A_0)$ is uniformly K-stable and~$2\norm{f'(\bar{\alpha}\alpha)\bar{\alpha}\alpha G^{-1}}_{\m{C}^2}<C<\lambda A_0(1-\lambda)^{-1}$, for every normalized~$v$ we have
\begin{equation}
\begin{split}
\int_{\diff_P}v\mrm{d}\sigma-&\int_PA_tv\mrm{d}\mu>\lambda\int_{\diff_P}v\mrm{d}\sigma-\int_P2t\left(f'(\bar{\alpha}\alpha)\bar{\alpha}\alpha\,G^{-1}\right)^{ab}_{,ab}v\mrm{d}\mu>\\
>&\lambda\int_{\diff_P}v\mrm{d}\sigma-\frac{t\,C}{A_0}\int_PA_0v\mrm{d}\mu>\int_{\diff_P}v\mrm{d}\sigma\left(\lambda-\frac{C}{A_0}(1-\lambda)\right)
\end{split}
\end{equation}
so that~$(P,A_t)$ is uniformly~$\lambda'$-stable, for~$\lambda'=\lambda-C(1-\lambda)A_0^{-1}$.

This implies a priori estimates along the continuity method~\eqref{eq:toric_contmet_Abreu} in terms of the norm of~$f'(\bar{\alpha}\alpha)\bar{\alpha}\alpha G^{-1}$. Indeed, the results in~\cite[\S$4$]{AnMinLi_prescribed_toric} show how solutions of Abreu's equation~$-u^{ab}_{ab}=A$ have a priori estimates in terms of~$\sup A$ and a uniform stability threshold for~$(P,A)$. If we write the solution~$u_t$ of \eqref{eq:toric_contmet_Abreu} as~$u_t=u_0+v_t$ for some~$v_t\in\m{C}^\infty(P)$, then there is a constant~$C$ such that~$\norm{f'(\bar{\alpha}\alpha)\bar{\alpha}\alpha G^{-1}}_{\m{C}^2}<C$ implies a bound~$\norm{v_t}_{\m{C}^4}<C^*$.

As the norm of~$G^{-1}$ is estimated in terms of the norms of~$u_0$ and~$v_t$, this implies that, if~$\norm{f'(\bar{\alpha}\alpha)\bar{\alpha}\alpha}_{\m{C}^2}$ is small enough, then the~$\m{C}^4$-norm of the solutions of the continuity method~\eqref{eq:toric_contmet_Abreu} is uniformly bounded by~$C^*$. Recalling that~$\alpha=G^{-1}H$, we see that there is a constant~$C'$ such that~$\norm{H}_{\m{C}^2}<C'$ implies a priori bounds for solutions~$v_t$ of~\eqref{eq:toric_contmet}.

Of course the previous reasoning can also be used to give~$\m{C}^{4,\alpha}$-bounds on solutions of~\eqref{eq:toric_contmet}. Then, any sequence of solutions of~\eqref{eq:toric_contmet} will converge to a new solution, up to subsequences, and the usual regularity theory for elliptic equations shows that the continuity method is closed.
\end{proof}

\appendix
\section{K\"ahler geometry of the space of complex structures}\label{sec:geometry_TJ}

In this Section we recall the basic geometric properties of~$\m{J}$ and~$\cotJ$, and we will prove that the Calabi ansatz~\eqref{eq:metrica_TJ_f} gives a K\"ahler metric on~$\cotJ$. For a more detailed description of the K\"ahler structure of~$\m{J}$ we refer the reader to~\cite{ScarpaStoppa_hyperk_reduction}.

First, consider the space~$\m{J}$. In a system of Darboux coordinates for~$\omega$, the symplectic form is identified with the matrix~$\Omega_0=\begin{pmatrix}0 & \bb{1}\\-\bb{1} &0\end{pmatrix}$, while any element of~$\m{J}$ is identified with a matrix~$J\in\bb{R}^{2n\times 2n}$ such that
\begin{equation}\label{eq:app_AC+}
J^2=-\bb{1},\  J^\transpose\Omega_0J=\Omega_0\mbox{ and }\Omega_0J>0.
\end{equation}
We denote with~$\m{AC}^+$ the space of~$2n\times 2n$ real matrices satisfying the conditions~\eqref{eq:app_AC+}, so that in a system of Darboux coordinates on~$U\subset M$, any element of~$\m{J}$ is identified with a section of~$U\times\m{AC}^+\to U$. From this point of view, the choice of a different system of Darboux coordinates on~$M$, corresponds to the action by conjugation of~$\mrm{Sp}(2n)$ on~$\m{AC}^+$. In other words,~$\m{J}$ is the space of section of a bundle~$F\to M$ whose fibres are isomorphic of~$\m{AC}^+$. This bundle trivializes over system of Darboux coordinates for~$\omega$, and the transition between two different trivialization is given by the action of~$\mrm{Sp}(2n)$.

The~$\mrm{Sp}(2n)$-action of~$\m{AC}^+$ is in fact transitive, and the stabilizer of any point is isomorphic to~$U(n)$, so that we can identify~$\m{AC}^+$ with~$\mrm{Sp}(2n)/U(n)$. This symmetric space has an alternative description, as it is isomorphic to Siegel's upper half space~$\f{H}$, the set of all~$n\times n$ complex matrices with a positive-definite imaginary part. The symplectic group acts on~$\f{H}$ by an analogue of the M\"obius transformations: for~$\begin{pmatrix}A & B\\ C& D\end{pmatrix}\in\mrm{Sp}(2n)$, the action is defined as
\begin{equation}
\begin{pmatrix}A & B\\ C& D\end{pmatrix}.Z=(AZ+B)(CZ+D)^{-1}.
\end{equation}
A~$\mrm{Sp}(2n)$-equivariant identification of~$\m{AC}^+$ and~$\f{H}$ is given by
\begin{equation}\label{eq:app_identification}
\begin{split}
\Psi\colon\m{AC}^+&\to\f{H}\\
P&\mapsto(\Omega_0P)^{-1/2}.\I\bb{1}
\end{split}
\end{equation}
Siegel's upper half space is a K\"ahler-Einstein manifold, and the~$\mrm{Sp}(2n)$-action preserves both the complex structure and the metric. Then,~$\m{AC}^+$ is also a K\"ahler manifold: the complex structure on~$T_J\m{AC}^+$ is given by~$A\mapsto JA$, while the metric is simply~$\langle A,B\rangle=\mrm{Tr}(AB)$. This shows that~$\m{J}$ is the space of sections of a bundle whose fibres carry a K\"ahler structure, invariant under the transition functions. By~\cite{Koiso_complex_structure},~$\m{J}$ inherits a K\"ahler structure from that of~$\m{AC}^+$, given by integrating the product of~$\m{AC}^+$ over~$M$.

A similar discussion can be made for~$T\m{J}$ and~$\cotJ$, that we identify using the fibrewise metric of~$F$. They inherit a complex structure from that of~$\m{AC}^+$, and a K\"ahler metric on~$T\m{AC}^+$ (or~$T\f{H}$) will define a K\"ahler metric on~$\cotJ$. The complex structure is given by
\begin{equation}\label{eq:app_AC+complexstructure}
\begin{split}
T_{J,A}T\m{AC}^+&\to T\m{AC}^+\\
(\dot{J},\dot{A})&\mapsto(J\dot{J},J\dot{A}+A\dot{J}).
\end{split}
\end{equation}
This can be seen by considering an element~$(J,A)$ of~$T\m{AC}$ as a first-order deformation~$J+\varepsilon A$ of~$J$; the complex structure~\eqref{eq:app_AC+complexstructure} is then simply given by the first two orders of the expansion of~$(J+\varepsilon A)(\dot{J}+\varepsilon\dot{A})$.

As for the metric, on~$T\m{AC}^+$ we consider spectral functions
\begin{equation}\label{eq:app_spectralfunc}
\begin{split}
f:T\m{AC}^+&\to\bb{R}\\
(J,A)&\mapsto f(A^2)
\end{split}
\end{equation}
and the Calabi ansatz metrics on~$T\m{AC}^+$ defined from the metric of~$\m{AC}^+$ and the complex Hessian of these spectral functions as
\begin{equation}
\omega_f=\pi^*\omega_{\m{AC}^+}+\mrm{d}\mrm{d}^cf.
\end{equation}
These K\"ahler metrics on~$T\m{AC}^+$ induce metrics on~$\cotJ$, as described in~\cite{Koiso_complex_structure}. Theorem~$3.2$ in~\cite{ScarpaStoppa_hyperk_reduction} shows that any Calabi ansatz metric on~$\cotJ$ of the type considered in~\eqref{eq:metrica_TJ} comes from this construction.
\begin{prop}\label{prop:convex_potential}
Let~$f(J,A)$ be a symmetric function of the eigenvalues of~$A^2$. If~$f$ is convex, then~$\mrm{d}\mrm{d}^cf$ is a positive-definite~$(1,1)$-form on~$T\m{AC}^+$.
\end{prop}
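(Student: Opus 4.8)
The plan is to reduce the positivity of~$\mrm{d}\mrm{d}^cf$ to a pointwise, finite-dimensional statement by using the homogeneity of~$\m{AC}^+$, and then to verify it by splitting the complex Hessian into blocks indexed by the eigenvalues of~$A^2$, much as in the symbol computation in the proof of Theorem~\ref{thm:intro_linearizzazione}.

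First I would use the $\mrm{Sp}(2n)$-invariance. The complex structure~\eqref{eq:app_AC+complexstructure}, the metric on~$T\m{AC}^+$, and the spectral function~$f$ are all $\mrm{Sp}(2n)$-invariant, and~$\mrm{Sp}(2n)$ acts transitively on~$\m{AC}^+\cong\f{H}$ with isotropy~$U(n)$. It therefore suffices to prove that~$\mrm{d}\mrm{d}^cf$ is positive at a single point~$(J_0,A_0)\in T\m{AC}^+$, and the residual $U(n)$-action on the fibre~$T_{J_0}\m{AC}^+$ allows me to assume that~$A_0$ is diagonal, so that~$A_0^2=\mrm{diag}(\lambda_1,\dots,\lambda_n)$ with~$\lambda_a\ge 0$.

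Next I would compute the Levi form at~$(J_0,A_0)$ in a frame adapted to the Cartan decomposition~$\f{sp}(2n)=\f{u}(n)\oplus\f{p}$, which identifies both the horizontal and the vertical tangent space of~$T\m{AC}^+$ with~$T_{J_0}\m{AC}^+\cong\f{p}$. The crucial point is that the complex structure~\eqref{eq:app_AC+complexstructure} is not the naive product structure: the term~$A\dot J$ couples base and fibre directions, and rewriting~$\mrm{d}\mrm{d}^cf$ in the genuine horizontal/vertical splitting brings in the (parallel) curvature of the symmetric space~$\f{H}$, contracted against~$A_0$ and weighted by the gradient~$f'(A_0^2)$. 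Because~$f$ is preserved by parallel transport, its horizontal first derivatives vanish and the horizontal second derivatives come entirely from this curvature term; the vertical block, on the other hand, is simply the complex Hessian of the function~$A\mapsto f(A^2)$ on the flat fibre~$\f{p}$, whose entries I would read off from the second-derivative formula for spectral functions of Hermitian matrices, as in~\cite{derivatives_eigenvalues} --- second derivatives of~$f$ on the diagonal and divided differences of~$\nabla f$ off the diagonal.

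Finally, since~$f$ is symmetric in the eigenvalues and the whole construction is~$U(n)$-equivariant, the resulting Hermitian form is an orthogonal sum of scalar blocks indexed by a single direction~$a$ and of~$2\times 2$ blocks indexed by pairs~$(a,b)$, so positivity can be checked block by block. I expect the main obstacle to be the horizontal block: on the noncompact symmetric space~$\f{H}$ the curvature does not have a favourable sign, so this block need not be positive on its own, and positivity of the full form must instead be extracted by completing the square against the mixed horizontal--vertical block --- a Schur-complement argument in which the positive vertical block, coming from the convexity of~$f$, dominates the curvature contribution after weighting by~$f'$. On each~$2\times 2$ block this reduces to the same elementary positivity estimate carried out at the end of the proof of Theorem~\ref{thm:intro_linearizzazione}. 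This step, where the convexity hypothesis and the geometry of~$\mrm{Sp}(2n)/U(n)$ combine, is the algebraic heart of the Calabi ansatz~\eqref{eq:metrica_TJ_f}; in the special case~\eqref{eq:hyperk_potential} it recovers the positivity underlying the Biquard--Gauduchon hyperk\"ahler metric~\cite{Biquard_Gauduchon}.
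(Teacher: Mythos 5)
Your opening reduction is sound and matches the paper: $\mrm{Sp}(2n)$-invariance lets you work at a single base point, and the residual $U(n)$-action diagonalizes the fibre point. But from there the proposal has a genuine gap: the decisive positivity is never established. You set up a horizontal/vertical decomposition for the canonical connection, predict that the horizontal block carries a curvature contribution of unfavourable sign, and then assert that positivity ``must'' be recovered by a Schur-complement argument in which the vertical block dominates the mixed one. None of these blocks is actually computed, and the inequality that would make the Schur complement close is nowhere exhibited --- this is exactly the step you yourself call the algebraic heart, and it is the whole content of the proposition. Deferring it to the $2\times 2$ estimate at the end of the proof of Theorem~\ref{thm:intro_linearizzazione} does not fill the gap: that estimate concerns the principal symbol of the linearized operator, is derived under the standing assumption~\eqref{eq:spectral_k} that $f$ is separable, $f=\sum_a k(\lambda_a)$, and no argument is given that the Levi-form blocks of a \emph{general} symmetric convex $f$ reduce to it.

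The paper's proof also suggests your structural picture is off. It transports everything through the equivariant identification~\eqref{eq:app_identification} to $T\f{H}$, an open subset of a complex vector space on which the complex structure~\eqref{eq:app_AC+complexstructure} becomes the standard constant one. For constant vector fields $v$ one then has $\mrm{d}\mrm{d}^cf(v,Jv)=v(v(f))+Jv(Jv(f))$, so positivity of the Levi form reduces to ordinary real convexity of the pulled-back function~\eqref{eq:potential_Siegel}; this is verified by the Hessian formula for spectral functions of Hermitian matrices, \eqref{eq:vvf}, whose two terms are separately nonnegative --- the diagonal Hessian term by convexity of $f$, the divided differences $(\diff_af-\diff_bf)/(\lambda_a-\lambda_b)$ by convexity plus symmetry. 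In particular positivity holds term by term, with no cancellation between horizontal and vertical contributions, so no Schur complement against a sign-indefinite curvature block is needed: in the flat coordinates of $T\f{H}$ the form is manifestly semi-positive. If you wanted to push your frame-based route through, you would have to carry out the curvature computation in full (as Biquard--Gauduchon do for their specific potential~\eqref{eq:hyperk_potential}), which for general convex symmetric $f$ is substantially harder than the paper's reduction to real convexity in affine coordinates.
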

We will prove this using the~$\mrm{Sp}(2n)$-equivariant identification of~$\m{AC}^+$ with~$\f{H}$ of~\eqref{eq:app_identification}. Any~$A\in T_{-\Omega_0}\m{AC}^+$ can be written in block-matrix notation as
\begin{equation}
A=\begin{pmatrix}
X & Y\\ Y & -X
\end{pmatrix}\ \mbox{ for }X^\transpose=X,\ Y^\transpose=Y
\end{equation}
and~$\mrm{d}\Psi_{-\Omega_0}(A)=X+\I Y$. The eigenvalues of~$A^2$ coincide with those of
\begin{equation}
\mrm{d}\Psi_{-\Omega_0}(A)\,\overline{\mrm{d}\Psi_{-\Omega_0}(A)}=X^2+Y^2+\I(YX-XY)\in T_{\I\bb{1}}\f{H},
\end{equation}
and since~$\Psi$ is~$\mrm{Sp}(2n)$-equivariant, we can use the action to pull back~$f$ to~$\f{H}$:
\begin{equation}\label{eq:potential_Siegel}
f\left(\mrm{d}\Psi^{-1}(Z,S)\right)=k\left(\lambda_1,\dots,\lambda_n\right)\mbox{ for }\lambda_a=\lambda_a\left(\Im(Z)^{-\frac{1}{2}}S\Im(Z)^{-1}\bar{S}\Im(Z)^{-\frac{1}{2}}\right).
\end{equation}
By a slight abuse of notation, we will still denote with~$f$ the function defined on~$T\f{H}$ by~\eqref{eq:potential_Siegel}.
\begin{rmk}
Notice that~$\mrm{d}\mrm{d}^cf$ vanishes, when restricted to the zero-section of~$T\f{H}$. Indeed, if~$S=0$ and~$\dot{S}_1=\dot{S}_2=0$ then~$f(Z+s\dot{Z}_1+t\dot{Z}_2,S+s\dot{S}_1+t\dot{S}_2)$ is a constant, as the eigenvalues of
\begin{equation}
\Im(Z)^{-\frac{1}{2}}\left(S+s\dot{S}_1+t\dot{S}_2\right)\Im(Z)^{-1}\overline{\left(S+s\dot{S}_1+t\dot{S}_2\right)}\Im(Z)^{-\frac{1}{2}}
\end{equation}
vanish identically. Hence,~$\mrm{d}\mrm{d}^cf_{Z,0}\left((Z_1,0),(Z_2,0)\right)=0$.
\end{rmk}
\begin{proof}[Proof of Proposition~\ref{prop:convex_potential}]
As~$f\in\m{C}^\infty(T\f{H})$ is~$\mrm{Sp}(2n)$-invariant, it will be enough to show that~$\mrm{d}\mrm{d}^cf$ is a positive~$(1,1)$-form on~$T_{Z,S}\left(T\f{H}\right)$ for~$Z=\I\bb{1}$. In other words, we should check that, for any vector~$v\in T_{\I\bb{1},S}T\f{H}$ (that defines a constant vector field on~$T\f{H}$),~$\left(\mrm{d}\mrm{d}^cf\right)_{(\I\bb{1},S)}(v,Jv)>0$. By definition,
\begin{equation}
\mrm{d}\mrm{d}^cf(v,Jv)=v(v(f))+Jv(Jv(f))
\end{equation}
so it is sufficient to check that~$v(v(f))>0$ for any constant vector field. This second derivative of~$f$ can be computed by the same techniques of the proof of Theorem~\ref{thm:intro_linearizzazione}. Alternatively, an expression for~$v(v(f))$ can be found in~\cite[\S$5$]{LewisSendov_Hessian}. For ease of notation, let us assume that the eigenvalues of~$S\bar{S}$ are all distinct. For every~$(Z,S)\in T\f{H}$, let
\begin{equation}
A(Z,S)=\Im(Z)^{-\frac{1}{2}}S\Im(Z)^{-1}\bar{S}\Im(Z)^{-\frac{1}{2}},
\end{equation}
so that~$A(\I\bb{1},S)=S\bar{S}$, and for~$v\in T_{\I\bb{1},S}T\f{H}$ let~$B$ be the Hermitian matrix that satisfies
\begin{equation}
A\left((\I\bb{1},S)+t\,v\right)=S\bar{S}+t\,B+O(t^2).
\end{equation}
There is a unitary matrix~$U$ such that~$S\bar{S}=U\mrm{diag}\left(\lambda_1,\dots,\lambda_n\right)U^*$; if we let~$B'=U^*BU$ then
\begin{equation}\label{eq:vvf}
v(v(f))=\sum_{a,b}\diff_a\diff_bf\,B'_{aa}\,B'_{bb}+\sum_{a\not=b}\frac{\diff_af-\diff_bf}{\lambda_a-\lambda_b}\abs*{B'_{ab}}^2.
\end{equation}
As~$f$ is convex and symmetric, it can be checked by restricting to the~$\lambda_a,\lambda_b$-plane that
\begin{equation}
\frac{\diff_af-\diff_bf}{\lambda_a-\lambda_b}\geq 0,
\end{equation}
so~\eqref{eq:vvf} is positive.
\end{proof}

\addcontentsline{toc}{section}{References}
\bibliographystyle{amsalpha}
\bibliography{../biblio_HCSCK}

\end{document}